\journalname{Numer. Math.}
\newcommand{\bR}{\mathbb{R}}
\newcommand{\be}{\begin{equation}}
\newcommand{\ee}{\end{equation}}
\newcommand{\ba}{\begin{array}}
\newcommand{\ea}{\end{array}}
\newcommand{\bea}{\begin{eqnarray}}
\newcommand{\eea}{\end{eqnarray}}
\newcommand{\beas}{\begin{eqnarray*}}
\newcommand{\eeas}{\end{eqnarray*}}
\newcommand{\n}{\boldsymbol{n}}
\newcommand{\X}{\boldsymbol{X}}
\newcommand{\q}{\boldsymbol{q}}
\newcommand{\w}{\boldsymbol{\omega}}
\newcommand{\dt}{\partial_t}
\newcommand{\Tau}{\boldsymbol{\tau}}
\newcommand{\ud}{\underline{D}}
\newcommand{\ds}{\nabla_{\Gamma}}
\newcommand{\lap}{\Delta_{\Gamma}}
\newcommand{\cur}{\mathcal{H}}
\numberwithin{equation}{section}
\begin{document}

% ============== Title Page ===============

\title{An energy-stable parametric finite element method for the Willmore flow in three dimensions}

\titlerunning{An energy-stable PFEM for Willmore flow}        % if too long for running head

\author{Weizhu Bao \and
        Yifei Li \and Dongmin Wang
}

\authorrunning{W.~Bao, Y.~Li, D.~Wang } % if too long for running head

\institute{W.~Bao  \at
              Department of Mathematics, National
              University of Singapore, Singapore 119076\\
              Fax: +65-6779-5452, Tel.: +65-6516-2765,\\
              URL: https://blog.nus.edu.sg/matbwz/\\
              \email{matbaowz@nus.edu.sg}\\[1em]
           Y.~Li \at
              Mathematisches Institut, Universit\"at T\"{u}bingen, Auf der Morgenstelle 10., 72076 T\"{u}bingen, Germany\\
              \email{yifei.li@mnf.uni-tuebingen.de}\\[1em]
           D.~Wang \at
                Department of Mathematics, National University of Singapore, Singapore 119076\\
                School of Mathematical Sciences, Shanghai Jiao Tong University,   Shanghai 200240, China\\
   \email{e1583440@u.nus.edu}
              }

\date{Received: date / Accepted: date}

\maketitle
% ============= end of Title Page =============

% ============= abstract =============
\begin{abstract}

This work develops novel energy-stable parametric finite element methods (ES-PFEM) for the Willmore flow and  curvature-dependent geometric gradient flows of surfaces in three dimensions. The key to achieving energy stability lies in the use of two novel geometric identities: (i) a reformulated variational form of the normal velocity field, and (ii) the incorporation of the temporal evolution of the mean curvature into the governing equations. These identities enable the derivation of a new variational formulation. By using the parametric finite element method, an implicit fully discrete scheme is subsequently developed, which maintains the energy dissipative property at the fully discrete level. Based on the ES-PFEM, comprehensive insights into the design of ES-PFEM for general curvature-dependent geometric gradient flows and a new understanding of mesh quality improvement in PFEM are provided. In particular, we develop the first {ES-PFEM} for the Gauss curvature flow of surfaces. Furthermore, a tangential motion control methodology is applied to improve mesh quality and enhance the robustness of the proposed numerical method. Extensive numerical experiments confirm that the proposed method preserves energy dissipation properties and maintains good mesh quality in the surface evolution under the Willmore flow.

\keywords{Willmore flow  \and curvature-dependent geometric gradient flow \and Gauss curvature flow \and geometric identities \and parametric finite element method\and energy dissipation }

  \subclass{65M60, 65M12, 35K55, 53C44}
\end{abstract}

% ============= end of abstract ===========

\section{Introduction}
In this paper, we develop an energy-stable parametric finite element method (ES-PFEM) for the Willmore flow in three dimensions (3D) and extend it to related curvature-dependent geometric gradient flows. The Willmore energy is a central concept in geometric analysis proposed by Thomas Willmore \cite{willmore1993riemannian}, which quantifies the deviation of a surface from a sphere. For a closed surface $\Gamma$ embedded in 3D with the unit outward normal vector $\n$ and the mean curvature $\cur$, its Willmore energy is defined as 
\begin{equation}\label{Willmore-eng}
    W(\Gamma) \coloneqq \frac{1}{2}\int_{\Gamma} \cur^2\, \dd A.
\end{equation}
Among all closed surfaces, the sphere uniquely attains the minimal Willmore energy. Besides the sphere, other critical points of the Willmore energy, termed Willmore surfaces, are of fundamental importance in geometric analysis. The famous Willmore conjecture concerning the Willmore surfaces among tori was resolved by Marques and Neves \cite{marques2014min, marques2014willmore}, who proved that the Clifford torus achieves the minimal energy. For other applications in mathematics, we refer to \cite{Dall2012,francis1997minimax,kuwert2001willmore} and references therein.

Beyond its mathematical significance, the Willmore energy and the Willmore surfaces also exhibit profound physical applications across diverse domains. The Willmore energy was originally introduced by Leonhard Euler and Daniel Bernoulli to characterize nonlinear elastic energy in beam theory, enabling optimization of elastic material properties \cite{grinspun2003discrete, grinspun2005discrete, ciarlet2000mathematical}. In biological science, the Willmore functional is also known as the Helfrich energy, which successfully explains the biconcave shape of red blood cells and other membrane deformation phenomena \cite{helfrich1973elastic, deuling1976curvature, deuling1976red}. Additional applications span the black hole theory in astrophysics \cite{gibbons1983positive, hajicek1987origin}, denoising and restoration of surface in computer graphics \cite{bobenko2005discrete, bohle2008constrained, corman2024curvature}.

Geometric flow serves as a fundamental analytical tool for studying critical points of the surface energy functional in differential geometry. The Willmore flow, induced by the $L^2$-gradient flow of the Willmore energy functional, provides a natural approach for studying the Willmore surfaces. For an evolving closed surface $\Gamma(t)\subset \bR^3$ under the Willmore flow, its normal velocity $V$ is mathematically formulated (via thermodynamic variation of the Willmore 
energy (\ref{Willmore-eng})) as   
\begin{align} \label{eq: Willmore flow}
    V \coloneqq -\frac{\delta W(\Gamma)}{\delta \Gamma}=\lap \cur + |\mathbf{A}|^2 \cur - \frac{1}{2} \cur^3,
\end{align}
where $\lap = \nabla_{\Gamma}\cdot \nabla_{\Gamma}$ represents the Laplace-Beltrami operator, $\ds$ and $\nabla_{\Gamma}\cdot$ denote the surface gradient operator and the surface divergence operator, respectively, 
and $\mathbf{A} = \ds \n$ is the extended Weingarten map. The Willmore energy $W(\Gamma(t))$ satisfies the energy dissipative property:
\begin{equation}
    \frac{\dd W(\Gamma(t))}{\dd t} = -\int_{\Gamma(t)}V^2 \, \dd A\leq 0, \qquad t\ge0,
\end{equation}
with the dissipation rate given by the $L^2$-norm of the normal velocity $V$, confirming its interpretation as the $L^2$-gradient flow of the Willmore functional.

More generally, if we take the energy functional as $W(\Gamma) = \int_\Gamma f(\cur)\, \dd A$ for a general function $f(\cur)$, we can obtain the curvature-dependent geometric gradient flows via the $L^2$-gradient flow of the corresponding energy functional, with the normal velocity $V$ given as \cite{duan2021high}
\begin{equation}
    V \coloneqq \frac{\delta W(\Gamma)}{\delta \Gamma}= \lap (f'(\cur)) + |\mathbf{A}|^2f'(\cur) - \cur f(\cur).
\end{equation}
When $f(\cur) = \frac{\cur^2}{2}$, we recover the Willmore flow. Moreover, when $f(\cur) = 1$, the normal velocity becomes $V = -\cur$, which reduces to the mean curvature flow; and when $f(\cur)=\cur$, we obtain
$V=|\mathbf{A}|^2-\cur^2=-2\mathcal K$ with $\mathcal K=\det(\mathbf{A})$ being the Gaussian curvature which gives the Gauss curvature flow. {It corresponds to the classical case $\alpha=1$ in the wide family of Gauss curvature flow $V=-\mathcal K^\alpha$, up to a rescaling of time. These curvature-dependent geometric gradient flows, especially Willmore flow and Gauss curvature flow, have attracted considerable research interest; see \cite{andrews1999gauss,tso1985deforming,doerfler2019discrete} and references therein.

The Gauss curvature flow considered here is a classical fully nonlinear geometric flow that has been extensively studied in differential geometry \cite{tso1985deforming,andrews2000motion}. By the Gauss--Bonnet theorem, the change of the enclosed volume $M(t)$ during the evolution can be computed as follows:
\begin{equation}\label{volume rate of gauss flow}
\frac{\dd M}{\dd t} 
= \int_{\Gamma(t)} V \,\dd A 
= -2 \int_{\Gamma(t)} \mathcal{K} \,\dd A 
= -8\pi (1-g),
\end{equation}
where $g$ is the genus of the surface $\Gamma(t)$. For a smooth, strictly convex closed surface, the flow is well--posed and shrinks to a round point with constant volume decay rate $-8\pi$, confirming Firey's conjecture on the abrasion of worn stones \cite{firey1974shapes,andrews1999gauss}. Beyond its theoretical significance in geometry, the Gauss curvature flow also possesses physical and computational motivations with applications including mesh fairing and image denoising \cite{lee2005noise,brito2016image,zhao2006triangular}.}

Among these curvature-dependent geometric gradient flows, the Willmore flow \eqref{eq: Willmore flow} represents one of the most important and challenging examples. As a fourth-order highly nonlinear geometric partial differential equation, it poses considerable challenges to both numerical computation and theoretical analysis \cite{kuwert2001willmore, garcke2019willmore, rupp2023volume, kuwert2004removability}. Moreover, due to its gradient flow nature, it is desirable to develop a numerical method for the Willmore flow, which can preserve the energy dissipative property at the fully discrete level.
% \begin{figure}[htp!]
% \centering
% \includegraphics[width=0.8\textwidth]{figures/illu.pdf}
% \caption{An illustration of an evolving closed surface $\Gamma$ under Willmore flow by the velocity $V\n$ .}
% \label{fig: Illu}
% \end{figure}

Different numerical methods have been developed for the numerical simulation of the Willmore flow, with comprehensive surveys available in \cite{bao2025energy}. In the following, we focus specifically on parametric-based numerical methods. The first parametric method for the Willmore flow of closed surfaces in $\mathbb{R}^3$ was proposed by Rusu \cite{rusu2005algorithm}, which was subsequently advanced by Dziuk with a rigorous energy-stable analysis for spatial semi-discretization \cite{dziuk2008computational}. Subsequently, Duan, Li and Zhang developed a fully discrete energy-stable scheme \cite{duan2021high} based on Dziuk's formulation by employing the averaged vector-field collocation method for temporal discretization. However, the above approaches lack tangential velocity, resulting in deteriorated mesh quality during evolution. To address this problem, Barrett, Garcke and N\"urnberg (denoted as BGN) proposed a parametric finite element method (PFEM) for mean curvature flow \cite{barrett2007variational} that incorporates tangential motion, thereby achieving favorable mesh quality. By combining with Dziuk's formulation, BGN further extended the PFEM for the Willmore flow in two dimensions (2D) and 3D under the semi-discretization in space, which achieves energy stability and good mesh quality \cite{Barrett2020}, while it is not valid for full discretization. Furthermore, the PFEM has been systematically extended to incorporate spontaneous curvature effects, area difference elasticity phenomena, and so on \cite{barrett2016computational, Barrett2020, barrett2017stable, barrett2021stable}. Nevertheless, rigorous energy stability analysis for parametric methods incorporating tangential motions for the Willmore flow in 3D, particularly fully discretized PFEMs, has yet to be established.

Recently, Bao and Li achieved significant theoretical progress by developing the first unconditionally energy-stable fully-discretized PFEM for the Willmore flow of planar curves \cite{bao2025energy}. Their key innovation involves utilizing a proper evolution equation for the mean curvature, which is inspired by the evolving surface finite element method established by Kov\'acs et al. \cite{kovacs2021convergent}. Subsequently, their methodology was extended by Ma et al. to the axisymmetric Willmore flow of closed surfaces \cite{ma2025energy}. Very recently, N\"urnberg, Garcke, and Zhao combined curvature evolution equations with the geometric framework of the BGN scheme through dual independent curvature discretizations \cite{garcke2025stable}, thereby developing an energy-stable parametric finite element method that preserves both tangential motion capabilities and rigorous energy dissipation properties. {However, the extension of these energy-stable methodologies for the Willmore flow of surfaces in 3D remains its difficulty}. Furthermore, for curvature-dependent geometric gradient flows, even for the simplest nonlinearity $f(\cur) = \cur$ in the Gauss curvature flow, the energy stability analysis remains limited to the curve case in 2D \cite{barrett2008parametric2}, where the Gaussian curvature $\mathcal{K}$ reduces to $\cur$.

There are two main difficulties in constructing an energy-stable PFEM (ES-PFEM) for curvature-dependent geometric gradient flows in 3D, which we illustrate using the Willmore flow as our primary example. First, the extended Weingarten map $\mathbf{A}$ introduces a fundamental difficulty. In 2D, $|\mathbf{A}|$ reduces to a scalar that coincides with the mean curvature $\kappa$, whereas this is not the case in 3D. The previous methods by Rusu and Dziuk avoid direct treatment of $\mathbf{A}$ through complicated geometric identities. Second, at the continuous level, the energy stability relies on the following Reynolds transport theorem and Huisken's evolution identity for the mean curvature \cite{huisken1984flow}.
\begin{equation}
    \frac{d}{dt} \int_{\Gamma(t)} \frac{1}{2} \cur^2\, \dd A = \int_{\Gamma(t)} \cur\, \partial_t \cur\, \dd A + \int_{\Gamma(t)} \frac{1}{2} \cur^2 \nabla_{\Gamma} \cdot (V\n) \, \dd A.
\end{equation}
Here we use $\partial_t$ to represent the material derivative in \cite{Barrett2020}. The first term on the right-hand side represents energy change and requires Huisken's identity $\partial_t \cur = - \lap V - |\mathbf{A}|^2 V$ \cite{huisken1984flow}. However, $\cur \lap V$ involves two consecutive spatial integrations by parts, demanding $V\in H^2$, which yields a mismatch of regularity with the linear finite elements used in PFEM. The second term represents domain change and contains a highly nonlinear Jacobian $\nabla_{\Gamma} \cdot (V\n)$, making it difficult to achieve energy stability at the fully discrete level.

The main objective of this paper is to develop the first unconditionally energy-stable fully discretized PFEM for the Willmore flow of closed surfaces, and then to extend it for general curvature-dependent geometric gradient flows including the Gauss curvature flow. Our framework introduces the following novelties:

     (i) We establish a novel evolution equation for the mean curvature $\cur$. It decomposes the consecutive spatial derivatives of Huisken's $\lap V$ through temporal derivatives, enabling finite element analysis with linear elements.
     
    (ii) We employ a modified version of the transport theorem
    \begin{equation}
        \frac{\dd}{\dd t} \int_{\Gamma(t)} f \,\dd A = \int_{\Gamma(t)} \dt f \, \dd A + \int_{\Gamma(t)} f \ds\X :\ds(\dt\X)\, \dd A,
    \end{equation}
    where $\X:=\X(t)$ is a representation of the surface $\Gamma(t)$.  The modified formulation of the domain change $\ds \X:\ds (\dt \X)$ can be analyzed through the same argument as the BGN scheme.

     (iii) We extend our ES-PFEM to the curvature-dependent geometric gradient flows of the energy functional $W(\Gamma) = \int_{\Gamma} f(\cur)\, \dd A$ with a weakly convex function $f$. Notably, this includes the Gauss curvature flow when $f(\cur) = \cur$, for which we provide the first energy-stable discretization by PFEM. When $f(\cur) \equiv 1$, the resulting geometric gradient flow is the mean curvature flow, and our ES-PFEM is reduced to the BGN scheme. Our method not only unifies the existing ES-PFEM for mean curvature flow, but also provides comprehensive insight for the design of ES-PFEM for general geometric gradient flows.

     (iv) We provide a new understanding of improving the mesh quality of the PFEM. The BGN scheme's good mesh quality { stems} from the mean curvature vector identity $\cur \n = -\lap \X $, which simultaneously governs the computation of mean curvature $\cur$. 
     Since our method employs an evolution equation for computing $\cur$, the mesh quality can be improved independently through an alternative tangential motion once a convective term about the tangential velocity is considered in this evolution equation.
     Based on this insight, we regulate discretization-induced tangential motion by coupling tangential velocity equations directly into the discrete system, introducing tunable hyperparameters that control tangential motion while preserving energy dissipation properties.

The remainder of this paper is organized as follows: In Section 2, by introducing two new geometric identities for the normal velocity vector and time evolution of the mean curvature, we derive a new variational formulation for the Willmore flow in 3D, and we show its energy dissipation through a new transport theorem. In Section 3, we propose an energy-stable full discretization of the new formulation by PFEM, and establish its unconditional energy dissipation. The ES-PFEM is extended for simulating the curvature-dependent geometric gradient flows in Section 4. In Section 5, we propose a tangential motion control methodology for the improvement of mesh quality in our ES-PFEM, and we also illustrate its Newton iterative solver. In Section 6, numerical examples are performed to test the convergence order,  numerical efficiency, energy dissipative property and meshing quality of our proposed ES-PFEM, and we compute lower bound of the Willmore energy by the numerical method.

\section{A new variational formulation for the Willmore flow in 3D} 
In this section, we present a new variational formulation for the Willmore flow base on two geometric identities for the normal velocity vector $V\n$ and the temporal evolution of the mean curvature $\cur$. The energy dissipation is established within the variational formulation by using a new transport theorem.   

\subsection{Basic notations and identities}
In this paper, we always assume that $\Gamma(t) \subset \bR^3$ is a sufficiently smooth, orientable and compact evolving surface with outward unit normal vector field $\n$, parameterized by a global parameterization $\X(\cdot,t)$ as
$$
\X(\cdot, t): \Gamma(0) \rightarrow \bR^3, \quad (\boldsymbol{\rho}, t)\mapsto \left(x_1(\boldsymbol{\rho},t),\ x_2(\boldsymbol{\rho},t),\ x_3(\boldsymbol{\rho},t) \right)^T,
$$
where $\Gamma(0)$ is the initial surface parameterized by the identity map $\X(\cdot, 0)$.

For a scalar function $f\in C^2(\Gamma)$ on surface $\Gamma$, let $\bar{f}$ be an differentiable extension in the neighborhood of surface $\Gamma$, the surface gradient operator on $\Gamma$ is defined via \cite{Barrett2020}
\begin{equation}
\ds f \coloneqq\left(I_3- \n\n^T\right) \nabla \bar{f} = \left(\ud_1 f, \ud_2 f,\ud_3 f\right)^T,
\end{equation}
where $\ud_i$ ($i=1,2,3$) represent the tangential derivative operators intrinsic to the surface $\Gamma$ \cite{deckelnick2005computation}, $I_3$ the $3\times 3$ identity matrix, and the surface Laplacian operator (also called Laplace-Beltrami operator) is given by
\be
\lap f \coloneqq \ds\cdot \ds f = \ud_1^2 f + \ud_2^2 f + \ud_3^2 f.
\ee
In the case of a vector valued function $\boldsymbol{f} = \left(f_1, f_2, f_3\right)^T\in [C^2(\Gamma)]^3$, we define the surface Jacobian, surface divergence, and surface Laplacian as follows:
\begin{subequations}
    \label{def surface differential operators}
    \begin{align}
        &\ds \boldsymbol{f} \coloneqq \left(\ds f_1, \ds f_2, \ds f_3 \right)^T = \begin{pmatrix}
            \ud_1 f_1 & \ud_2 f_1 & \ud_3 f_1 \\
            \ud_1 f_2 & \ud_2 f_2 & \ud_3 f_2 \\
            \ud_1 f_3 & \ud_2 f_3 & \ud_3 f_3
        \end{pmatrix}, \\
        &\ds \cdot \boldsymbol{f} \coloneqq \text{Tr}(\ds \boldsymbol{f}) = \ud_1 f_1 + \ud_2 f_2 + \ud_3 f_3, \\
        &\lap \boldsymbol{f} \coloneqq \left(\lap f_1, \lap f_2, \lap f_3\right)^T.
    \end{align}
\end{subequations}
For a matrix-valued function $\boldsymbol{F} = (F_{ij})_{i,j=1}^3 \in [C^1(\Gamma)]^{3\times 3} $, its surface divergence is defined as 
\begin{equation}\label{matrix surface divergence}
    \ds \cdot \boldsymbol{F}= \begin{pmatrix}
        \ud_1 F_{11} + \ud_2 F_{12} + \ud_3 F_{13} \\
        \ud_1 F_{21} + \ud_2 F_{22} + \ud_3 F_{23} \\
        \ud_1 F_{31} + \ud_2 F_{32} + \ud_3 F_{33}
    \end{pmatrix}.
\end{equation}
%where $\left\{\boldsymbol{e}_1, \boldsymbol{e}_2, \boldsymbol{e}_3\right\}$ are three orthogonal directions in the Cartesian coordinate.  

Here we gather the product rule for surface differential operators on $f\in C^1(\Gamma)$, 
$\boldsymbol{f}, \boldsymbol{g}\in [C^1(\Gamma)]^3$ and 
$\boldsymbol{F},\boldsymbol{G}\in [C^1(\Gamma)]^{3\times 3} $, the proof can be found in \cite[Lemma 7]{Barrett2020}:
\begin{subequations}
\begin{align}
    &\ds\cdot (f\boldsymbol{g}) = \boldsymbol{g}\cdot \ds f + f\ds \cdot \boldsymbol{g},\label{divergence multi}\\
    &\ds (\boldsymbol{f}\cdot \boldsymbol{g}) = (\ds\boldsymbol{f})^T \boldsymbol{g} + (\ds \boldsymbol{g})^T \boldsymbol{f},\label{gradient multi vector}\\
    &\ds (f\boldsymbol{g}) = \boldsymbol{g}(\ds f)^T + f\ds \boldsymbol{g},\label{gradient multi scalar}\\
    & \ds \cdot (\boldsymbol{f} \boldsymbol{g}^T) = (\ds \cdot \boldsymbol{g})\boldsymbol{f} + (\ds \boldsymbol{f}) \boldsymbol{g}, \label{divergence vector vector} \\
    & \ds \cdot (f \boldsymbol{G}) = f \ds \cdot \boldsymbol{G} + \boldsymbol{G} \ds f, \label{divergence scalar matrix}\\
    & \ds \cdot (\boldsymbol{F} \boldsymbol{g}) = (\ds \cdot \boldsymbol{F}^T) \cdot \boldsymbol{g} + \text{Tr}(\boldsymbol{F} \nabla_\Gamma \boldsymbol{g}). \label{divergence matrix vector}
\end{align}
\end{subequations}

We define the function space $L^2(\Gamma(t))$ by
\begin{equation}
    L^2(\Gamma(t)) \coloneqq \left\{f:\, \Gamma(t) \rightarrow \bR\, \Big|\, \int_{\Gamma(t)} f^2\, \dd A < +\infty \right\},
\end{equation}
equipped with the $L^2$ inner product $(\cdot, \cdot )_{\Gamma(t)}$ which is formulated by 
\begin{equation}
    (u, v)_{\Gamma(t)} \coloneqq \int_{\Gamma(t)} uv \, \dd A,\quad \forall u, v\,\in L^2(\Gamma(t)).
\end{equation}
The definition of $L^2(\Gamma(t))$ can easily be extended to vector-valued function space $[L^2(\Gamma(t)]^3$ and the matrix-valued function space $[L^2(\Gamma(t))]^{3\times 3}$. Specially, the inner product for two matrix-valued functions is emphasized by
\begin{equation}\label{inner product matrix}
    \langle\boldsymbol{U}, \boldsymbol{V}\rangle_{\Gamma(t)} \coloneqq \int_{\Gamma(t)} \boldsymbol{U}:\boldsymbol{V}\, \dd A, \quad \forall\, \boldsymbol{U}, \boldsymbol{V}\, \in [L^2(\Gamma(t))]^{3\times 3},
\end{equation}
where $\boldsymbol{U}:\boldsymbol{V} \coloneqq \text{Tr}\,(\boldsymbol{U}^T\boldsymbol{V})$ denotes the Hilbert-Schmidt inner product. And the Sobolev space $H^1(\Gamma(t))$ can be straightforwardly defined:
\begin{equation}
    H^1(\Gamma(t)) \coloneqq \left\{f\in L^2(\Gamma(t))\, \text{and}\, \ \ud_i f \in L^2(\Gamma(t)),\quad i = 1,2,3\right\},
\end{equation}  
which can also be easily extended to vector-valued function space $[H^1(\Gamma(t))]^3$ and matrix-valued function space  $[H^1(\Gamma(t))]^{3\times 3}$. 

Next, we consider the geometric quantities {on a surface $\Gamma$}. The mean curvature is defined as the trace of the extended Weingarten map of the surface
\begin{equation}\label{def cur}
    \cur \coloneqq\ds\cdot \n = \text{Tr}(\mathbf{A}),
\end{equation}
where $\mathbf{A} := \ds \n$ is also called as the extended Weingarten map, which satisfies $\mathbf{A} = \mathbf{A}^T$. The mean curvature $\cur$ satisfies the following mean curvature vector identity \cite{Barrett2020}, which is frequently used in the BGN schemes:
\begin{equation}\label{BGN curvature}
    \cur \n = - \lap \X.
\end{equation}
The Willmore energy is therefore given by
\begin{equation}
    \label{continuous energy}
    W(t) \coloneqq W(\Gamma(t)) = \frac{1}{2}\int_{\Gamma(t)} \cur^2 \, \dd A = \frac{1}{2}\left(\cur, \cur \right)_{\Gamma(t)}.
\end{equation}

Using the above notation, we are able to write the Willmore flow in 3D as the following coupled fourth-order nonlinear parabolic geometric partial differential equations (PDEs): % ref{BGN}:
\begin{subequations}
    \label{original Willmore formulation}
    \begin{align}
        &\dt \X =V\n, \label{original Willmore formulation X}\\
        &V = \lap\cur+ |\mathbf{A}|^2 \cur -\frac{1}{2} \cur^3,\label{original Willmore formulation V}\\
        & \cur \n = - \lap \X,\label{original Willmore formulation H}
    \end{align}
\end{subequations}
where $|\mathbf{A}|^2\coloneqq \text{Tr}(\mathbf{A}^T\mathbf{A})$, with the initial condition $\X(\cdot,0)=\X_0(\cdot)$.

%Finally, here is a divergence theorem on surfaces that we will utilize later.
%\begin{lemma}[Divergence theorem]\label{lemma: divergence thm}
%Let the matrix-valued function $\boldsymbol{F} \in [C^1(\Gamma)]^{3\times 3}$ and the vector-valued function $\boldsymbol{g} \in [C^1(\Gamma)]^3$, there holds the following divergence theorem
%\begin{equation}
%    \int_{\Gamma} (\ds\cdot \boldsymbol{F})\cdot \boldsymbol{g}\,  \dd A = \int_{\Gamma} \cur\n^T\boldsymbol{F}\boldsymbol{g}\, \dd A-\int_{\Gamma} \boldsymbol{F}^T: \ds \boldsymbol{g}\,\dd A.
%\end{equation}
%\end{lemma}
%To ensure clarity and maintain the logical flow of the text, we defer the proof of this lemma to Appendix B.

\subsection{New geometric identities}
In this subsection, we focus on establishing two geometric identities that are crucial for constructing the ES-PFEM for the Willmore flow in 3D. The first identity characterizes the normal velocity vector of the Willmore flow, while the second one provides a novel expression for the mean curvature evolution that holds for all geometric flows. In contrast to our previous work on the planar Willmore flow \cite{bao2025energy}, both identities are formulated in the variational 
(or weak) forms.

\begin{lemma}\label{lemma V}
    For a solution $\X$ of the parameterized Willmore flow \eqref{original Willmore formulation} with the scalar normal velocity $V$, it holds that 
    \begin{align}
        \label{new identity for V}
        &\int_{\Gamma(t)} V\n\cdot \w \, \dd A \nonumber \\
        & = \int_{\Gamma(t)} \left(\cur \mathbf{A}  -\n (\ds \cur)^T - \frac{1}{2}\cur^2\ds \X\right): \ds \w \, \dd A, \quad \forall\, \w \in [H^1(\Gamma(t))]^3.
    \end{align}
\end{lemma}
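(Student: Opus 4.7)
The plan is to start from the left-hand side, substitute $V=\lap\cur+|\mathbf{A}|^2\cur-\tfrac12\cur^3$ from \eqref{original Willmore formulation V}, and integrate by parts each of the three resulting summands on the closed surface $\Gamma(t)$ (so that no boundary terms arise) until they match the three pieces $\cur\mathbf{A}:\ds\w$, $-\n(\ds\cur)^T:\ds\w$ and $-\tfrac12\cur^2\,\ds\X:\ds\w$ appearing on the right. The intended correspondence is: the $\lap\cur$ contribution will produce $-\n(\ds\cur)^T:\ds\w$ together with a residual $-(\mathbf{A}\ds\cur)\cdot\w$; the $\cur^3$ contribution, after invoking the mean-curvature-vector identity $\cur\n=-\lap\X$ from \eqref{BGN curvature}, will produce $-\tfrac12\cur^2\,\ds\X:\ds\w$ together with a residual $-\cur\,\ds\cur\cdot\w$; and the $|\mathbf{A}|^2\cur$ contribution, combined with the two residuals, will then be rewritten as $\cur\mathbf{A}:\ds\w$.

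For the $\lap\cur$ term I would apply the surface divergence theorem twice, using the product rule $\ds(\n\cdot\w)=\mathbf{A}\w+(\ds\w)^T\n$ from \eqref{gradient multi vector} (with $\ds\n=\mathbf{A}$ symmetric), and noting by a direct index calculation that $\ds\cur\cdot(\ds\w)^T\n=(\n(\ds\cur)^T):\ds\w$. For the $\cur^3$ term, after replacing $-\tfrac12\cur^3\n\cdot\w$ by $\tfrac12\cur^2\,\lap\X\cdot\w$, componentwise integration by parts together with the fact that $\ds\X=I_3-\n\n^T$ acts as the identity on the tangential vector $\ds\cur$ delivers the required form. The core step is the third: I would integrate by parts $\int\cur\mathbf{A}:\ds\w\,\dd A$ via the matrix identity
\begin{equation*}
  \int_{\Gamma(t)} \boldsymbol{G}:\ds\w\,\dd A
  = \int_{\Gamma(t)} \cur\,(\boldsymbol{G}\n)\cdot\w\,\dd A
    - \int_{\Gamma(t)} (\ds\cdot\boldsymbol{G})\cdot\w\,\dd A,
\end{equation*}
which follows from \eqref{divergence matrix vector} and the surface divergence theorem, applied with $\boldsymbol{G}=\cur\mathbf{A}$. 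The first term on the right vanishes because $\mathbf{A}\n=\tfrac12\ds|\n|^2=0$, and expanding $\ds\cdot(\cur\mathbf{A})=\mathbf{A}\,\ds\cur+\cur\,\ds\cdot\mathbf{A}$ via \eqref{divergence scalar matrix} reduces the remaining identity to the classical Codazzi--Mainardi relation
\begin{equation*}
  \ds\cdot\mathbf{A}=\ds\cur-|\mathbf{A}|^2\,\n.
\end{equation*}

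The main obstacle, as anticipated in the introduction, is precisely this direct handling of the matrix-valued Weingarten map $\mathbf{A}$, which reduces to the displayed Codazzi--Mainardi identity. Its normal component $\n\cdot\lap\n=-|\mathbf{A}|^2$ follows from Bochner applied to $|\n|^2\equiv 1$, and its tangential component encodes the symmetry $\ud_j n_i=\ud_i n_j$ of the second fundamental form, combined with the elementary observation that $\lap\n=\ds\cdot\mathbf{A}$ read directly from the definition \eqref{matrix surface divergence}. Once this identity is in place, all remaining manipulations are routine applications of the product rules \eqref{divergence multi}--\eqref{divergence matrix vector} and of the surface divergence theorem on the closed surface $\Gamma(t)$.
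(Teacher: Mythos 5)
Your proposal is correct and is essentially the paper's argument with the steps reordered: the paper first assembles the pointwise divergence-form identity $V\n = \lap(\cur\n) - \ds\cdot\left(2\cur\mathbf{A} - \tfrac{1}{2}\cur^2\ds\X\right)$ and then applies the divergence theorem once against $\w$, whereas you test first and integrate each of the three summands of $V$ by parts separately; the residual terms $-(\mathbf{A}\ds\cur)\cdot\w$ and $-\cur\,\ds\cur\cdot\w$ you track cancel exactly as in the paper's pointwise computation. Your key input, the Codazzi--Mainardi relation $\ds\cdot\mathbf{A} = \ds\cur - |\mathbf{A}|^2\n$, is precisely the identity $\ds\cur = \lap\n + |\mathbf{A}|^2\n$ the paper cites from the literature, since $\ds\cdot\mathbf{A} = \lap\n$, so no genuinely different lemma or decomposition is involved.
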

\begin{proof}
    By utilizing \eqref{divergence vector vector} with $\boldsymbol{f} = \n$ and $\boldsymbol{g} = \ds \cur$, \eqref{divergence scalar matrix} with $f = \cur$ and $\boldsymbol{G} = \mathbf{A}$, and \eqref{divergence scalar matrix} with $f = \cur^2$ and $\boldsymbol{G} = \ds \X$,  we can derive
    \begin{subequations}
        \begin{align}
            &\ds \cdot(\n (\ds \cur)^T) = \ds\n \ds\cur + \lap \cur \n = \mathbf{A} \ds \cur + \lap \cur \n,  \label{pf1 eq1}\\
            &\ds \cdot (\cur \ds \n) = \ds\n \ds\cur+ \cur \lap \n = \mathbf{A} \ds \cur + \cur \lap \n,  \label{pf1 eq2} \\
            & \ds \cdot (\cur^2 \ds \X) = 2\cur \ds \X \ds \cur + \cur^2 \lap \X = 2\cur  \ds \cur - \cur^3 \n.   \label{pf1 eq3}
        \end{align}
    \end{subequations}
 The last equality makes use of \eqref{BGN curvature}, $\n \cdot \ds \cur = 0$, and $\ds \X = I_3 - \n \n^T$ from \cite[(1)]{Barrett2020}. 
 Using \eqref{gradient multi scalar}, \eqref{pf1 eq1}, and \eqref{pf1 eq2}, there holds
 \begin{align}\label{pf1 eq4}
     \lap (\cur \n) = \ds\cdot \left( \n(\ds \cur)^T + \cur \ds \n\right) %\nonumber\\
     %&= \ds\cdot (\ds\cur \n^T)+ \ds\cdot (\cur \ds\n)\nonumber\\
     =\,  2\mathbf{A} \ds\cur+ \cur \lap \n +  \lap\cur \n.
 \end{align}
Moreover, the expression for the surface gradient related to the mean curvature is given as follows (see Lemma 16 of \cite{Barrett2020}):
\begin{align}\label{pf1 eq5}
    \ds \cur = \lap \n + |\mathbf{A}|^2 \n.
\end{align}
Recalling $V = \lap\cur + |\mathbf{A}|^2\cur - \frac{1}{2}\cur^3$, combining \eqref{pf1 eq2}, \eqref{pf1 eq3}, \eqref{pf1 eq4} and \eqref{pf1 eq5}, we deduce
\begin{align}
    &\lap(\cur\n)- \ds\cdot \left(2\cur \mathbf{A} - \frac{1}{2}\cur^2 \ds\X\right)\nonumber\\
    &\ = \left(2\mathbf{A} \ds\cur+ \cur \lap \n +  \lap\cur \n\right) -2(\mathbf{A} \ds\cur + \cur \lap \n) + \left(\cur \ds\cur -\frac{1}{2}\cur^3 \n\right)\nonumber\\
    &\ =- \cur \lap \n + \lap\cur \n +\cur \ds\cur -\frac{1}{2}\cur^3 \n \nonumber\\
    &\ =  \left( \lap \cur + |\mathbf{A}|^2 \cur  - \frac{1}{2}\cur^3  \right)\n \nonumber\\
    &\ =V\n.
\end{align}
Finally, multiplying both sides of the above equation by $\w$,  then integrating over $\Gamma(t)$, and applying the product rule \eqref{divergence matrix vector}, \eqref{gradient multi scalar}, and the divergence theorem given in \cite[Theorem 21]{Barrett2020} with $\boldsymbol{f} = \left(2\cur \mathbf{A} - \frac{1}{2}\cur^2 \ds\X\right)^T \boldsymbol{\omega} = \left(2\cur \mathbf{A} - \frac{1}{2}\cur^2 \ds\X\right) \boldsymbol{\omega}$, we have
\begin{align}
    \int_{\Gamma(t)}V\n \cdot \w\, \dd A=& \int_{\Gamma(t)}\left( \lap(\cur\n)- \ds\cdot \left(2\cur \mathbf{A} - \frac{1}{2}\cur^2 \ds\X\right) \right)\cdot \w\,\dd A\nonumber\\
    =&\int_{\Gamma(t)} \left(-\ds(\cur\n)+ \left(2\cur \mathbf{A}- \frac{1}{2}\cur^2 \ds\X\right)\right):\ds\w\,\dd A\nonumber\\
    & \qquad -\int_{\Gamma(t)} \cur\n^T(2\cur \mathbf{A}- \frac{1}{2}\cur^2 \ds\X) \w\,\dd A\nonumber\\
    =&\int_{\Gamma(t)} \left(-\n(\ds\cur)^T-\cur \mathbf{A} + \left(2\cur \mathbf{A} - \frac{1}{2}\cur^2 \ds\X\right)\right):\ds\w\,\dd A\nonumber\\
    =&\int_{\Gamma(t)} \left(\cur\mathbf{A} -\n (\ds\cur)^T- \frac{1}{2}\cur^2\ds\X\right):\ds\w\,\dd A,
\end{align}
which yields the result \eqref{new identity for V}.\qed
\end{proof}

In addition, the integral geometric identity related to the temporal evolution of the mean curvature is given by the following lemma. For notational convenience, we always use $\partial_t$ to represent the material derivative in \cite{Barrett2020}.
\begin{lemma}\label{lemma H}
    For the mean curvature $\cur$ of the evolving surface $\Gamma(t)$ parameterized by $\X(\cdot, t)$, there holds
    \begin{equation}
        \label{new identity for H}
        \int_{\Gamma(t)} \dt \cur  \varphi \,\dd A = \int_{\Gamma(t)}\ds (\dt\X): \left(\n(\ds\varphi)^T- \varphi \mathbf{A}\right)\,\dd A, \quad \forall \varphi \in H^1(\Gamma(t)).
    \end{equation}

\end{lemma}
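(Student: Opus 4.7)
The plan is to verify the identity by starting from the right-hand side of \eqref{new identity for H}, reducing it through product rules and surface integration by parts on the closed surface $\Gamma(t)$, and finally invoking Huisken's evolution identity for the mean curvature to identify with the left-hand side.

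First, I would reduce the matrix contraction in the first RHS integral. Using the elementary identity $\boldsymbol{U}:(\boldsymbol{a}\boldsymbol{b}^T) = \boldsymbol{a}^T \boldsymbol{U} \boldsymbol{b}$ combined with the tangential-derivative product rule $\ud_j((\dt X_i) n_i) = n_i\, \ud_j(\dt X_i) + (\dt X_i)\, \ud_j n_i$ summed over $i$, together with $|\n|^2 = 1$ and $\mathbf{A} = \mathbf{A}^T$, one arrives at
\begin{equation*}
\ds(\dt \X) : \n(\ds \varphi)^T \;=\; (\ds V - \mathbf{A}\,\dt \X)\cdot \ds \varphi, \qquad V := \n \cdot \dt \X .
\end{equation*}
Both $\ds V$ and $\mathbf{A}\,\dt \X$ are tangential to $\Gamma(t)$ (the latter because $\mathbf{A}\n = 0$).

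Second, I would integrate by parts. Since $\Gamma(t)$ is closed, the divergence theorem \cite[Theorem 21]{Barrett2020} gives $\int_{\Gamma(t)} \ds \varphi\cdot \boldsymbol{g}\,\dd A = -\int_{\Gamma(t)} \varphi\,\ds\cdot \boldsymbol{g}\,\dd A$ whenever $\boldsymbol{g}$ is tangential. Applying this to $\ds V$ and $-\mathbf{A}\,\dt \X$, expanding $\ds \cdot(\mathbf{A}\,\dt\X) = (\ds \cdot \mathbf{A})\cdot \dt\X + \mathbf{A}:\ds(\dt\X)$ via \eqref{divergence matrix vector}, and using the identity $\ds \cdot \mathbf{A} = \lap \n = \ds \cur - |\mathbf{A}|^2 \n$ (a direct consequence of the gradient formula $\ds \cur = \lap \n + |\mathbf{A}|^2 \n$ from \cite[Lemma 16]{Barrett2020} already invoked in the proof of Lemma \ref{lemma V}), the first integral on the RHS becomes
\begin{equation*}
\int_{\Gamma(t)} \varphi\bigl(-\lap V - |\mathbf{A}|^2 V + \dt\X\cdot \ds \cur + \mathbf{A}:\ds(\dt\X)\bigr)\,\dd A .
\end{equation*}
The $\mathbf{A}:\ds(\dt\X)$ piece cancels exactly against the second integral $-\int \varphi\,\mathbf{A}:\ds(\dt\X)\,\dd A$ on the RHS of \eqref{new identity for H}, leaving the RHS in the form $\int_{\Gamma(t)} \varphi(-\lap V - |\mathbf{A}|^2 V + \dt\X\cdot \ds \cur)\,\dd A$.

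Finally, I would invoke Huisken's evolution identity for the mean curvature under a general (not necessarily normal) surface motion, namely
\begin{equation*}
\dt \cur \;=\; -\lap V - |\mathbf{A}|^2 V + \dt\X \cdot \ds \cur,
\end{equation*}
which reduces to the classical form $\partial^\circ_t \cur = -\lap V - |\mathbf{A}|^2 V$ for purely normal motion $\dt\X = V\n$ (since $\n \cdot \ds \cur = 0$), with the extra convective term handling the tangential part of the material velocity. Substituting into the reduced RHS produces $\int_{\Gamma(t)} \varphi\,\dt \cur\,\dd A$, which is the LHS. The main obstacle is the algebraic bookkeeping: one must verify tangentiality of $\ds V$ and $\mathbf{A}\,\dt\X$ so that no mean-curvature boundary term from the closed-surface divergence theorem contaminates the calculation, and must orchestrate the cancellation of the $\mathbf{A}:\ds(\dt\X)$ contribution produced by the product rule against the second term on the RHS.
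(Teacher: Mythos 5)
Your proof is correct, but it runs in the opposite direction from the paper's and uses a different set of inputs. The paper starts from Huisken's identity $\dt \cur = -\lap V - |\mathbf{A}|^2 V$ for the purely normal parameterization $\dt\X = V\n$, performs a single integration by parts on $\lap V$, and then converts both $\ds V\cdot\ds\varphi$ and $|\mathbf{A}|^2 V\varphi$ into expressions in $\ds(\dt\X)$ by purely algebraic manipulations: $(\ds V)^T = \n^T\ds(\dt\X)$ and $\mathrm{Tr}(\mathbf{A}^T V\mathbf{A}) = \mathrm{Tr}\bigl(\mathbf{A}^T(V\mathbf{A} + \n(\ds V)^T)\bigr)$, the latter exploiting $\mathbf{A}^T\n = \boldsymbol{0}$ to insert the missing rank-one term for free. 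You instead start from the right-hand side, reduce $\ds(\dt\X):\n(\ds\varphi)^T$ to $(\ds V - \mathbf{A}\,\dt\X)\cdot\ds\varphi$ (which is correct, and is just \eqref{gradient multi vector} applied to $\n\cdot\dt\X$), and then integrate by parts a \emph{second} time on the $\mathbf{A}\,\dt\X$ term; this forces you to differentiate the Weingarten map via $\ds\cdot\mathbf{A} = \lap\n = \ds\cur - |\mathbf{A}|^2\n$ and to invoke the generalized (convective) form of the evolution identity, $\dt\cur = -\lap V - |\mathbf{A}|^2 V + \dt\X\cdot\ds\cur$. What your route buys is generality: it covers parameterizations with a tangential velocity component, whereas the paper's proof explicitly uses $\dt\X = V\n$. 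What it costs is an extra derivative on the geometry (you need $\ds\cdot\mathbf{A}$, i.e., third derivatives of $\X$, where the paper's argument needs only $\mathbf{A}$ itself) and a stronger black-box input (the convective Huisken identity, which you state without proof; in the paper's purely normal setting the term $\dt\X\cdot\ds\cur = V\,\n\cdot\ds\cur$ vanishes and the classical identity from \cite{huisken1984flow} suffices). Both arguments establish the lemma as it is used in the paper.
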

\begin{proof}
From the Huisken's evolution equation for the mean curvature \cite{huisken1984flow}, we know that 
\begin{equation}
    \label{pf2 eq1}
    \dt \cur = -\lap V - |\mathbf{A}|^2 V.
\end{equation}
In our parameterization, $\dt \X =V\n$, thus \eqref{gradient multi scalar} yields
\begin{equation}
    \label{pf2 eq2}
    \ds(\dt\X) = \ds(V\n) = \n (\ds V)^T + V \mathbf{A}.
\end{equation}
Furthermore, noting $\mathbf{A}^T\n = \boldsymbol{0}$, by left-multiplying both sides of \eqref{pf2 eq2} with $\n^T$, it turns out
\begin{equation}
    \label{pf2 eq3}
    (\ds V)^T = \n^T \ds (\dt \X). 
\end{equation}
By multiplying both sides of \eqref{pf2 eq1} with $\varphi$, then integrating over $\Gamma(t)$, and applying the above equation, we can proceed directly to the computations:
\begin{align}
    \int_{\Gamma(t)} \dt \cur \varphi \, \dd A &= \int_{\Gamma(t)} (-\lap V -|\mathbf{A}|^2V)\, \varphi \dd A\nonumber\\
    & = \int_{\Gamma(t)} \ds V \cdot \ds \varphi  \dd A - \int_{\Gamma(t)} \text{Tr}\left(\mathbf{A}^TV \mathbf{A}\right)\varphi\, \dd A\nonumber\\
    & = \int_{\Gamma(t)} (\ds V)^T \ds \varphi  \dd A - \int_{\Gamma(t)} \text{Tr}\left(\mathbf{A}^T(V \mathbf{A} + \n (\ds V)^T)\right)\varphi\, \dd A\nonumber\\
    & = \int_{\Gamma(t)} \n^T\ds(\dt\X)\ds \varphi  \dd A - \int_{\Gamma(t)} \text{Tr}(\mathbf{A}^T\ds(\dt \X))\varphi\, \dd A\nonumber\\
    & = \int_{\Gamma(t)}\ds (\dt\X): (\n(\ds\varphi)^T- \varphi \mathbf{A})\,\dd A,
\end{align}
which validates \eqref{new identity for H}.\qed
\end{proof}
\subsection{A variational formulation and its properties}

%{ 
%To ensure the energy stability of a PFEM under full discretization, the key lies in our carefully constructed variational formulation here.} 
%

Let $(\mathbf{X}(\cdot,t), V(\cdot, t), \cur(\cdot,t)) \in [H^1(\Gamma(t))]^3 \times H^1(\Gamma(t)) \times H^1(\Gamma(t))$ be a solution to the Willmore flow \eqref{original Willmore formulation} under our parameterization. We reformulate the governing equation \eqref{original Willmore formulation X} as $\n \cdot \dt \X = V$. Then we multiply by test functions $\phi \in L^2(\Gamma(t))$ and integrate over the surface $\Gamma(t)$ to get its variational form. Furthermore, the variational forms of the normal velocity $V(\cdot, t)$ and the mean curvature $\cur(\cdot, t)$ are given by Lemma \ref{lemma V} and Lemma \ref{lemma H}, respectively. Consequently, this derivation yields a consistent variational formulation for the Willmore flow given by
\begin{subequations}\label{new weak form}
\begin{align}
    &\left(\n \cdot \dt \X, \phi\right)_{\Gamma(t)} = \left(V, \phi\right)_{\Gamma(t)}, \quad \forall \phi\in H^1(\Gamma(t)),\label{new weak form X}\\
    &\left(V\n, \w\right)_{\Gamma(t)} = \Big\langle\cur \mathbf{A} - \n\left(\ds\cur\right)^T- \frac{1}{2}\cur^2\ds \X, \ds\w\Big\rangle_{\Gamma(t)}, \forall \w\in [H^1(\Gamma(t))]^3,\label{new weak form V}\\
    &\left(\dt\cur, \varphi\right)_{\Gamma(t)} = \big\langle\ds(\dt \X), \n (\ds \varphi)^T- \varphi \mathbf{A} \big\rangle_{\Gamma(t)}, \quad \forall \varphi \in H^1(\Gamma(t));\label{new weak form H}
\end{align}
with the initial data $\X(\cdot,0):=\X_0(\cdot)$ and $\cur(\cdot,0):=-\n\cdot\lap \X_0(\cdot)$.
\end{subequations}
%{
%\begin{remark}\label{remark for the purely normal evolution}
%    Note that in deriving \eqref{new weak form H}, we assumed a purely normal evolution. Therefore, although \eqref{new weak form X} by itself might suggest the presence of a tangential component, no tangential motion arises when \eqref{new weak form} is considered together with \eqref{new weak form H}.
%\end{remark}}
\begin{remark}
    The new variational formulation above is in fact a natural generalization of Bao and Li's work in the two-dimensional case \cite{bao2025energy}. By replacing the surface gradient operator $\ds$ by the arc length derivative $\partial_s$ and utilizing the identity $\partial_s \n = \kappa\partial_s \X$ with $\kappa$ denoting the mean curvature in 2D, we can recover the variational formulation in the two-dimensional case presented in \cite{bao2025energy}. 
\end{remark}

\begin{lemma}[A new transport theorem]\label{transport thoerem}
    Let a scalar-valued function $f$ be defined on the evolving surface $\Gamma(t)$ and $f$ is differentiable with respect to $t$. Then it holds
    \begin{equation}
        \label{eqn:transport thm}
        \frac{\dd}{\dd t} \int_{\Gamma(t)} f \,\dd A = \int_{\Gamma(t)} \dt f \, \dd A + \int_{\Gamma(t)} f \ds\X :\ds(\dt\X)\, \dd A.
    \end{equation}
\end{lemma}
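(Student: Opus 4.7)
The plan is to reduce the new transport theorem to the classical Reynolds transport theorem on an evolving surface, which states
\begin{equation*}
\frac{\dd}{\dd t} \int_{\Gamma(t)} f \,\dd A = \int_{\Gamma(t)} \dt f \, \dd A + \int_{\Gamma(t)} f\, \ds \cdot (\dt \X) \, \dd A,
\end{equation*}
where $\dt$ is the material derivative associated with the parameterization $\X(\cdot,t)$ and $\dt\X$ is the corresponding velocity field. Under this interpretation the classical formula is available, so the only remaining task is to establish the pointwise algebraic identity
\begin{equation*}
\ds \X : \ds(\dt \X) = \ds \cdot (\dt \X).
\end{equation*}

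The second step is to expand the left-hand side. Invoking the well-known relation $\ds \X = I_3 - \n\n^T$ (the tangential projection, used already in the proof of Lemma~\ref{lemma V}) and the definition $\boldsymbol{U}:\boldsymbol{V} = \text{Tr}(\boldsymbol{U}^T\boldsymbol{V})$, the Frobenius contraction splits as
\begin{equation*}
(I_3 - \n\n^T):\ds(\dt\X) = \text{Tr}\bigl(\ds(\dt\X)\bigr) - \n^T\,\ds(\dt\X)\,\n.
\end{equation*}
The first summand is precisely $\ds\cdot(\dt\X)$ by the definition in \eqref{def surface differential operators}, so it suffices to show that the correction term $\n^T \ds(\dt\X)\n$ vanishes.

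The third step treats this correction term: each row of $\ds(\dt\X)$ is the surface gradient of a scalar component $(\dt\X)_i$, which lies in the tangent space, so $\n \cdot \ds(\dt\X)_i = 0$ componentwise. Hence $\ds(\dt\X)\,\n = \boldsymbol{0}$ and the correction term drops out, completing the proof.

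I do not anticipate a substantial obstacle: the argument is essentially a one-line algebraic manipulation once the classical transport theorem is in hand. The only point requiring care is ensuring that $\dt$ is interpreted consistently as the material derivative along $\X(\cdot,t)$ — the same convention used throughout the paper and in Lemma~\ref{lemma H} — so that no additional tangential/normal velocity decomposition is needed and the formula \eqref{eqn:transport thm} follows directly.
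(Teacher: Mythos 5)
Your proposal is correct and follows essentially the same route as the paper: both reduce the statement to the classical transport theorem $\frac{\dd}{\dd t}\int_{\Gamma(t)} f\,\dd A = \int_{\Gamma(t)}\dt f\,\dd A + \int_{\Gamma(t)} f\,\ds\cdot(\dt\X)\,\dd A$ and then identify $\ds\cdot(\dt\X)$ with $\ds\X:\ds(\dt\X)$ via $\ds\X = I_3-\n\n^T$. The only cosmetic difference is that the paper cites the identity $\text{Tr}(\ds(\dt\X)) = \text{Tr}(\ds(\dt\X)(\ds\X)^T)$ from the literature, whereas you verify it directly by noting that $\ds(\dt\X)\,\n=\boldsymbol{0}$; both are valid.
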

\begin{proof}
    From (v) in \cite[Remark 6]{Barrett2020} and the fact that $\ds \X = (I_3 - \n \n^T)$, we know that
    \begin{align}\label{pf3 eq2}
        \ds\cdot (\dt\X) = \text{Tr}(\ds(\dt\X))=  \text{Tr}(\ds(\dt\X)(\ds\X)^T)= \ds \X :\ds (\dt\X).
    \end{align}
    Therefore, using the transport theorem \cite[Theorem 32]{Barrett2020}, we can derive that {
    \begin{align}
        \frac{\dd}{\dd t} \int_{\Gamma(t)}f \, \dd A &= \int_{\Gamma(t)} \dt f \, \dd A + \int_{\Gamma(t)} f \ds \cdot \left(\dt \X\right) \, \dd A\nonumber\\
        & = \int_{\Gamma(t)} \dt f \, \dd A + \int_{\Gamma(t)} f \ds \X :\ds(\dt\X)\, \dd A,
    \end{align}}
    which implies \eqref{eqn:transport thm}.
    \qed
\end{proof}
\begin{remark}
    The transport theorem reveals that energy dissipation decomposes into two components: energy change (represented by $\int_{\Gamma(t)} \dt f \, \dd A$) and domain change (represented by $\int_{\Gamma(t)} f \ds\X :\ds(\dt\X)\, \dd A$). Instead of $\ds \cdot  (V \n)$, our new formulation adopts $\ds\X:\ds(\dt\X)$ to represent the domain change term, which is inspired by the fundamental inequality from the BGN scheme in the discrete setting as \cite{barrett2007variational}:
    \begin{equation*}
    \int_{\Gamma^m} \ds \X^{m+1} :\ds ( \X^{m+1} - \X^m) \, \dd A \geq |\Gamma^{m+1}| - |\Gamma^m|, \qquad m\ge0.
    \end{equation*}
    This reformulation is crucial for enabling energy stability in the full-discretization.
\end{remark}

Now we can turn back to prove the energy dissipative property of the new variational formulation \eqref{new weak form}.
\begin{theorem}[Energy dissipation]\label{thm:continuous energy dissipation }
{Let $\left(\X(\cdot,t), V(\cdot, t), \cur(\cdot,t)\right)$ be a sufficiently smooth solution of the new variational formulation \eqref{new weak form} with the initial data $\X(\cdot,0)$ and $\cur(\cdot,0)$.}  Then the Willmore energy $\Gamma(t)$ defined in \eqref{continuous energy} is decreasing over time, i.e.
\begin{equation}
\label{eqn: continuous energy dissipation}
    W(t) \leq W(t')\leq W(0) = \frac{1}{2}\left(\cur_0,\cur_0\right)_{\Gamma(0)}, \quad \forall t \geq t'\geq 0.
\end{equation}
\end{theorem}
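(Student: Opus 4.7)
The plan is to differentiate $W(t)$ in time using the new transport theorem, then systematically substitute each resulting term using the three variational equations \eqref{new weak form X}--\eqref{new weak form H}, and observe that the pieces cancel to leave exactly $-\|V\|_{L^2(\Gamma(t))}^2$.

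More concretely, the first step is to apply Lemma \ref{transport thoerem} with the choice $f = \tfrac{1}{2}\cur^2$. This gives
\begin{equation*}
\frac{\dd}{\dd t} W(t) = \left(\cur,\dt \cur\right)_{\Gamma(t)} + \Big\langle \tfrac{1}{2}\cur^2\,\ds \X,\ \ds(\dt \X)\Big\rangle_{\Gamma(t)}.
\end{equation*}
The second step is to eliminate the material derivative $\dt \cur$ by invoking the curvature evolution identity \eqref{new weak form H} with the scalar test function $\varphi = \cur \in H^1(\Gamma(t))$, which rewrites $(\cur,\dt \cur)_{\Gamma(t)}$ as $\langle \ds(\dt \X),\ \n(\ds \cur)^T - \cur\,\mathbf{A}\rangle_{\Gamma(t)}$.

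The third step is to combine these two contributions and recognise the resulting bilinear expression as exactly the right-hand side of the normal-velocity identity \eqref{new weak form V}: namely,
\begin{equation*}
\frac{\dd}{\dd t} W(t) = -\Big\langle \cur\,\mathbf{A} - \n(\ds \cur)^T - \tfrac{1}{2}\cur^2\,\ds \X,\ \ds(\dt \X)\Big\rangle_{\Gamma(t)}.
\end{equation*}
Applying \eqref{new weak form V} with the vector test function $\w = \dt \X \in [H^1(\Gamma(t))]^3$, the bracket equals $(V\n,\dt \X)_{\Gamma(t)} = (V,\n\cdot\dt \X)_{\Gamma(t)}$. Finally, testing the kinematic equation \eqref{new weak form X} with $\phi = V \in H^1(\Gamma(t))$ collapses this to $(V,V)_{\Gamma(t)}$, yielding
\begin{equation*}
\frac{\dd}{\dd t} W(t) = -\int_{\Gamma(t)} V^2\,\dd A \le 0.
\end{equation*}
The monotonicity assertion \eqref{eqn: continuous energy dissipation} follows by integrating this inequality from $t'$ to $t$.

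The calculation itself is essentially algebraic once the three lemmas are in hand, so the only conceptual hurdle is precisely the choice of test functions that triggers the cancellation. The main obstacle I anticipate is verifying regularity: each step requires $\dt \X$, $V$, and $\cur$ to lie in the admissible test spaces (i.e.\ $[H^1(\Gamma(t))]^3$ and $H^1(\Gamma(t))$ respectively), which is why the theorem is stated for a sufficiently smooth solution. Notably, no second-order spatial derivatives of $V$ ever appear, confirming that the reformulations in Lemmas \ref{lemma V} and \ref{lemma H} have successfully lowered the regularity demand from the $H^2$ needed in the classical Huisken identity approach.
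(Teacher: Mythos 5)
Your proposal is correct and follows essentially the same route as the paper's proof: both apply the new transport theorem with $f=\tfrac{1}{2}\cur^2$ and then test \eqref{new weak form X}, \eqref{new weak form V}, \eqref{new weak form H} with $\phi=\pm V$, $\w=\dt\X$, $\varphi=\cur$ so that the Weingarten-map terms cancel and only $-\left(V,V\right)_{\Gamma(t)}$ survives. The only difference is presentational (sequential substitution versus summing the three tested equations at once), so there is nothing to flag.
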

\begin{proof}
We employ Lemma \ref{transport thoerem} for $f = \frac{1}{2}\cur^2$ and derive that
\begin{align}\label{pf4 eq1}
    \frac{\dd}{\dd t} W(t) &= \frac{\dd}{\dd t}\int_{\Gamma(t)}\frac{\cur^2}{2}\, \dd A \nonumber\\
    &= \int_{\Gamma(t)} \cur \dt \cur\, \dd A + \frac{1}{2}\int_{\Gamma(t)} \cur^2 \ds\X :\ds(\dt\X)\, \dd A,\nonumber\\
    & = \left(\dt \cur, \cur \right)_{\Gamma(t)}+ \Big\langle\frac{\cur^2}{2}\ds\X, \ds(\dt \X)\Big\rangle_{\Gamma(t)}, \qquad t\ge0.
\end{align}  
In the meanwhile, by taking test functions $\phi = -V \in L^2(\Gamma(t))$ in \eqref{new weak form X}, $\w = \dt \X \in [H^1(\Gamma(t))]^3$ in \eqref{new weak form V} and  $\varphi = \cur \in H^1(\Gamma(t))$ in \eqref{new weak form H}, we obtain
\begin{subequations}
    \begin{align}
        &-\left(\n \cdot \dt \X, V\right)_{\Gamma(t)} = -\left(V, V\right)_{\Gamma(t)}, \label{continuous, test function X}\\
        &\left(V\n, \dt \X\right)_{\Gamma(t)} = \Big\langle\cur \mathbf{A} - \n\left(\ds\cur\right)^T- \frac{1}{2}\cur^2\ds \X, \ds (\dt \X)\Big\rangle_{\Gamma(t)}, \label{continuous, test function V}\\
        &\left(\dt\cur, \cur\right)_{\Gamma(t)} = \big\langle\ds(\dt \X), \n (\ds \cur)^T- \cur \mathbf{A} \big\rangle_{\Gamma(t)}, \qquad t\ge0. \label{continuous, test function H}
    \end{align}
\end{subequations}
Summing up the above three equations, we get
\begin{equation}\label{pf4 eq2}
    \begin{aligned}
    \left(\dt\cur, \cur\right)_{\Gamma(t)} & = - \left(V, V\right)_{\Gamma(t)}-\Big\langle\frac{\cur^2}{2}\ds\X, \ds(\dt \X)\Big\rangle_{\Gamma(t)},\qquad t\ge0.
    \end{aligned}
\end{equation}
Utilizing \eqref{pf4 eq1} and \eqref{pf4 eq2}, it can be deduced that
\begin{equation}
    \frac{\dd}{\dd t} W(t) = -\left(V, V\right)_{\Gamma(t)} \leq 0, \qquad t\ge0,
\end{equation}
which yields the energy dissipative property \eqref{eqn: continuous energy dissipation}.
\qed
\end{proof}

\begin{remark}
    In the above proof of the energy dissipation, by employing \eqref{new weak form H} along with the new transport theorem Lemma \ref{transport thoerem}, we circumvent the treatment of the high-order term $\lap V$ as in \eqref{pf2 eq1}, and the extended Weingarten map $\mathbf{A}$ cancels out between \eqref{continuous, test function V} and \eqref{continuous, test function H}. This will help us develop the energy-stable numerical scheme under piecewise linear finite elements in the following section.
\end{remark}

\section{An energy-stable parametric finite element method}
In this section, by utilizing the parametric finite element method in spatial discretization and the backward Euler method in temporal discretization, we present an ES-PFEM for the Willmore flow in 3D by building upon the new variational formulation \eqref{new weak form}. We also prove rigorously the unconditional energy stability of the proposed ES-PFEM.

\subsection{A full-discretization}
To derive an appropriate fully discrete scheme, we introduce the following parametric finite element approximation. Let $\tau > 0$ be the uniform time step size and denote the discrete time levels as $t_m = m\tau$ for $m = 0,1,2,\cdots$. At every time step $t_m$, we approximate the evolving surface $\Gamma(t_m)$ by the polygonal surface denoted by $\Gamma^m$, which is composed of a collection of mesh vertices $\{ \q_k^m\}_{k = 1}^K$ or equivalently, a collection of nonoverlapping and nondegenerate triangles $\{ \sigma_{j}^m\}_{j = 1}^J$. The polygonal surface can be written as 
$$
\Gamma^m \coloneqq \bigcup_{j = 1}^J \overline{\sigma_{j}^m},\quad  m = 0,1,2,\cdots. 
$$
For $1\leq j \leq J$, the triangle surface element $\sigma_j^m = \{\q_{j_1}^m, \q_{j_2}^m, \q_{j_3}^m\}$ is determined by its three vertices $\q_{j_1}^m, \q_{j_2}^m, \q_{j_3}^m$ in counter-clockwise order on the outer surface of $\Gamma^m$ in 3D. 

We use the continuous piecewise linear elements in space to approximate the space $L^2(\Gamma(t_m))$, which is given as 
\begin{equation}
    \mathbb{K}^m \coloneqq \left\{ u \in C(\Gamma^m)\, \Big|\, \,  u |_{\sigma_j^m}\in \mathcal{P}^1(\sigma_j^m), \quad \forall 1\leq j \leq J \right\},
\end{equation}
where $\mathcal{P}^1(\sigma_j^m)$ denotes the function space of all polynomials with degree at most $1$ on $\sigma_{j}^m$. Let $\n^m$ be the outward unit normal vector on polygonal surface $\Gamma^m$, then it is a piecewise constant vector-valued function which can be defined via
\begin{align} \label{discrete normal vector}
    \n^m|_{\sigma_j^m}\coloneqq \n^m_j = \frac{\mathcal{J}\{\sigma_j^m\}}{|\mathcal{J}\{\sigma_j^m\}|},
\end{align}
where $\mathcal{J}\{\sigma_j^m\}$ is the direction vector given by the three vertices denoted by 
\begin{equation}\label{direction vector}
    \mathcal{J}\{ \sigma_j^m\} =  \left(\q_{j_2}^m- \q_{j_1}^m\right)\times  \left(\q_{j_3}^m- \q_{j_1}^m\right),
\end{equation}
on each $\sigma_j^m = \{\q_{j_1}^m, \q_{j_2}^m, \q_{j_3}^m\}$.

{For scalar-valued or vector-valued functions $u^h,v^h$ on $\Gamma^m$ with values in $\mathbb R$ or $\mathbb R^3$}, the $L^2$ inner product $(\cdot,\cdot )_{\Gamma(t_m)}$ is approximated by the mass-lumped inner product $(\cdot, \cdot)_{\Gamma^m}^h$,
\begin{equation}
    \left(u^h, v^h\right)_{\Gamma^m}^h\coloneqq \frac{1}{3}\sum_{j = 1}^J \sum_{k = 1}^3|\sigma_j^m|\, u^h\left( (\q_{j_k}^m)^{-}\right)\cdot v^h\left( (\q_{j_k}^m)^{-}\right),
\end{equation}
where $|\sigma_j^m| = \frac{1}{2}|\mathcal{J}\{\sigma_j^m\}|$ is the area of $\sigma_j^m$, and  $f((\q_{j_k}^m)^{-}) = \lim\limits_{\boldsymbol{x} \rightarrow \q_{j_k}^m, \boldsymbol{x}\in \sigma_j^m} f(\boldsymbol{x})$. {For matrix-valued functions $\boldsymbol{U}^h, \boldsymbol{V}^h: \Gamma^m \to \mathbb R^{3\times 3}$ }, we approximate the inner product \eqref{inner product matrix} by $\left< \boldsymbol{U}^h, \boldsymbol{V}^h\right>_{\Gamma^m}^h$ defined as
\begin{equation}
    \left<\boldsymbol{U}^h, \boldsymbol{V}^h\right>_{\Gamma^m}^h \coloneqq\frac{1}{3} \sum_{j = 1}^J \sum_{k =1}^3 |\sigma_j^m|\, \boldsymbol{U}^h\left(  (\q_{j_k}^m)^{-}\right):\boldsymbol{V}^h\left(  (\q_{j_k}^m)^{-}\right).
\end{equation}
{Finally, the surface gradient operator $\ds$ on a triangle surface element $\sigma = \{\q_{1}, \q_{2}, \q_{3}\}$ for any $f\in \mathbb{K}^m$ is given as in \cite{bao2021structure} by} 
\begin{equation}\label{discretized surface gradient}
    (\ds f)\big|_{\sigma} \coloneqq  f(\q_1)\frac{(\q_2-\q_3)\times \n}{|\mathcal{J}\{\sigma\}|}+f(\q_2)\frac{(\q_3-\q_1)\times \n}{|\mathcal{J}\{\sigma\}|}+f(\q_3)\frac{(\q_1-\q_2)\times \n}{|\mathcal{J}\{\sigma\}|},
\end{equation}
where $\n$ is the normal vector of $\sigma$ given by \eqref{discrete normal vector}.

Based on the finite element space $\mathbb{K}^m$ defined above, we can define a
 parameterization over $\Gamma^m$ for $\Gamma^{m+1}$ as $\X^{m+1}(\cdot) \in [\mathbb{K}^m]^3$, where $[\mathbb{K}^m]^3$ can be viewed as a finite element approximation of the function space $[H^1(\Gamma(t_m))]^3$, {thus the surface $\Gamma^{m+1}$ is defined through $\q_k^{m+1} = \boldsymbol{X}^{m+1}(\boldsymbol{q}_k^m)$ and $\sigma_k^{m+1} = \boldsymbol{X}^{m+1}(\sigma_k^m)$. In particular, $\X^m(\cdot)$ can represent the identity function in $[\mathbb{K}^m]^3$.} 
 
{Moreover, let $\cur^{m+1}, V^{m+1}\in \mathbb{K}^m$ denote the numerical approximations of the mean curvature and scalar normal velocity of $\Gamma^{m+1}$, respectively. The corresponding push-forward function for $\cur^{m+1}$ is
$\cur^{m+1}_{\Gamma^{m+1}} = \cur^{m+1}\circ (\X^{m+1})^{-1}\in \mathbb K^{m+1}$. In the following, for simplicity, we do not distinguish between $\cur^{m+1}$ and $\cur^{m+1}_{\Gamma^{m+1}}$, and denote both simply by $\cur^{m+1}$.}

Then an unconditionally energy-stable PFEM based on the implicit backward Euler scheme is proposed as follows: Given the initial data of a polygonal surface $\Gamma^0$: \quad $\X^0 (\cdot) \in [\mathbb{K}^0]^3$ and $\cur^0(\cdot)\in \mathbb{K}^0$, find the solution $\left(\X^{m+1},V^{m+1},\cur^{m+1}\right) \in [\mathbb{K}^m]^3 \times \mathbb{K}^m\times \mathbb{K}^m$ such that
\begin{subequations}
\label{eqn:full wm}
\begin{align}
\label{eqn:full1 wm}
&\left(\frac{\X^{m+1}-\X^m}{\tau}\cdot \n^m, \phi^h\right)_{\Gamma^m}^h= \left(V^{m+1},\,  \phi^h\right)_{\Gamma^m}^h,\qquad \qquad \quad  \forall \phi^h\in \mathbb{K}^m,\\[0.5em]
\label{eqn:full2 wm}
&\left(V^{m+1}\n^m,\w^h\right)_{\Gamma^m}^h=\Big\langle\cur^{m+1} \mathbf{A}^{m}-\n^m(\ds \cur^{m+1})^T-\frac{1}{2}(\cur^{m+1})^2 \ds \X^{m+1},\, \ds \w^h\Big\rangle_{\Gamma^m}^h,\nonumber\\
& \qquad \qquad \qquad \qquad\qquad \qquad \qquad \forall \w^h  \in [\mathbb{K}^m]^3, \\[0.5em]
\label{eqn:full3 wm}
&\left(\cur^{m+1}-\cur^m, \,\varphi^h\right)_{\Gamma^m}^h = \Big\langle \ds(\X^{m+1}-\X^m), \n^m (\ds\varphi^h)^T -\varphi^h \mathbf{A}^{m} \Big\rangle_{\Gamma^m}^h,\quad \forall \varphi^h \in \mathbb{K}^m,
\end{align}
\end{subequations}
where $\n^m$ is given in \eqref{discrete normal vector} and $\mathbf{A}^{m}$ is a numerical approximation of the extended Weingarten map $\ds\n$ at $t=t_m$. 
Since $\n^m$ is piecewise constant in $[\mathbb{K}^h]^3$, the extended Weingarten map $\ds\n^m$ vanishes under the surface gradient \eqref{discretized surface gradient}, necessitating a refined approximation. Here we employ vertex unit normals $\boldsymbol{w}^m$ (see \cite{barrett2008parametric2}) to make normals non-constant over each surface element $\sigma_{j}^m$, enabling a direct computation of the discrete extended Weingarten map as
\begin{align}\label{eqn:A^m}
    \mathbf{A}^{m} = \ds \boldsymbol{w}^m, \qquad m\ge0.
\end{align}

\subsection{Energy stability}
For the polygonal surface $\Gamma^m$, we define the discretized Willmore energy as 
{
\begin{equation}
    \label{discretized Willmore energy}
    W^m \coloneqq  \frac{1}{2}\left(\cur^m, \cur^m\right)_{\Gamma^m}^h = \frac{1}{6}\sum_{j = 1}^J\sum_{k = 1}^3|\sigma_{j}^m|\left( \cur^m(\q_{j_k}^m)\right)^2, \qquad m\ge0.
\end{equation}}
{
To establish the energy stability of the proposed scheme, we first recall a
local geometric estimate on each surface element. This estimate is essentially
Lemma 2.1 in \cite{barrett2008parametric1}, but rewritten in the notation of the
present paper. For completeness, we include the proof.

\begin{lemma}\label{lemma: local estimate}
Let $\Gamma^m = \bigcup_{j=1}^{J} \overline{\sigma_j^m}$ be a polyhedral surface,
and let $\X^{m+1} \in [\mathbb{K}^m]^3$ be the map defining the
updated surface $\Gamma^{m+1} = \X^{m+1}(\Gamma^m)$, with $\X^m$ being the identity map on $\Gamma^m$.
Then, for each surface element $\sigma_j^m$, the following estimates hold:
\begin{align}
    &\frac{1}{2} |\sigma_j^m| \left( \ds \X^{m+1}\big|_{\sigma_j^m} : \ds \X^{m+1}\big|_{\sigma_j^m} \right) \geq |\sigma_j^{m+1}|, \label{lemma: local estimate 2} \\[0.5em]
    &\frac{1}{2} |\sigma_j^m| \left( \ds \X^{m}\big|_{\sigma_j^m} : \ds \X^{m}\big|_{\sigma_j^m} \right) = |\sigma_j^{m}|, \label{lemma: local estimate 1}
\end{align}
where $\sigma_j^{m+1} = \X^{m+1}(\sigma_j^m)$.
\end{lemma}
\begin{proof}
We fix an element 
$\sigma_j^m = \{\q_{j_1}^m, \q_{j_2}^m, \q_{j_3}^m\}$, denote 
$\boldsymbol{a}^m = \q_{j_2}^m - \q_{j_1}^m$ and $\boldsymbol{b}^m = \q_{j_3}^m - \q_{j_1}^m$. 
Then, the updated surface $\Gamma^{m+1}$ is defined through 
$\q_{j_k}^{m+1} = \X^{m+1}(\q_{j_k}^{m})$ and 
$\sigma_j^{m+1} = \X^{m+1}(\sigma_j^m)$. Recalling \eqref{direction vector}, the areas of $\sigma_j^m$ and $\sigma_j^{m+1}$ are
\begin{equation}\label{proof of lemma: area at t_m}
|\sigma_j^m| = \frac12 |\mathcal{J}\{\sigma_j^m\}| = \frac12 |\boldsymbol{a}^m \times \boldsymbol{b}^m|,
\end{equation}
and 
\begin{equation}\label{proof of lemma: area at t_m+1}
|\sigma_j^{m+1}| = \frac12 |\mathcal{J}\{\sigma_j^{m+1}\}| = \frac12 |\boldsymbol{a}^{m+1} \times \boldsymbol{b}^{m+1}|.
\end{equation}
Moreover, by \eqref{discretized surface gradient}, on the element $\sigma_j^m$,
\begin{align}
\ds \X^{m+1}\big|_{\sigma_j^m} 
&= \q_{j_1}^{m+1}\otimes \frac{(\q_{j_2}^m-\q_{j_3}^m)\times \n_j^m}{|\boldsymbol{a}^m \times \boldsymbol{b}^m|}
+ \q_{j_2}^{m+1}\otimes \frac{(\q_{j_3}^m-\q_{j_1}^m)\times \n_j^m}{|\boldsymbol{a}^m \times \boldsymbol{b}^m|} \nonumber\\
&\quad + \q_{j_3}^{m+1}\otimes \frac{(\q_{j_1}^m-\q_{j_2}^m)\times \n_j^m}{|\boldsymbol{a}^m \times \boldsymbol{b}^m|} \nonumber\\[0.5em]
&= \boldsymbol{a}^{m+1}\otimes \frac{\boldsymbol{b}^m \times \n_j^m}{|\boldsymbol{a}^m \times \boldsymbol{b}^m|}
- \boldsymbol{b}^{m+1}\otimes \frac{\boldsymbol{a}^m \times \n_j^m}{|\boldsymbol{a}^m \times \boldsymbol{b}^m|}.
\end{align}
Hence, by Lagrange's identity 
$
(\boldsymbol{c}\times\boldsymbol{d})\cdot(\boldsymbol{e}\times\boldsymbol{f})
=
(\boldsymbol{c}\cdot\boldsymbol{e})(\boldsymbol{d}\cdot\boldsymbol{f})
-
(\boldsymbol{c}\cdot\boldsymbol{f})(\boldsymbol{d}\cdot\boldsymbol{e})
$, $\boldsymbol{a}^m \cdot \n_j^m= \boldsymbol{b}^m\cdot \n_j^m =0, \n_j^m \cdot \n_j^m = 1$, and identity $|\boldsymbol{c}|^2 |\boldsymbol{d}|^2 = (\boldsymbol{c} \cdot \boldsymbol{d})^2 + |\boldsymbol{c} \cdot \boldsymbol{d}|^2$, one can calculate
\begin{align}\label{proof of lemma: local estimate}
&\ds \X^{m+1}|_{\sigma_j^m}:\ds \X^{m+1}|_{\sigma_j^m} \nonumber \\[0.5em]
&= \frac{|\boldsymbol{b}^m \times \boldsymbol{n}^m_j|^2 |\boldsymbol{a}^{m+1}|^2 + |\boldsymbol{a}^m\times \boldsymbol{n}^m_j|^2 |\boldsymbol{b}^{m+1}|^2
- 2 (\boldsymbol{a}^m\times \boldsymbol{n}^m_j)\cdot (\boldsymbol{b}^m \times \boldsymbol{n}^m_j) (\boldsymbol{a}^{m+1}\cdot \boldsymbol{b}^{m+1})}{|\boldsymbol{a}^m \times \boldsymbol{b}^m|^2} \nonumber\\[0.5em]
&= \frac{|\boldsymbol{b}^m|^2 |\boldsymbol{a}^{m+1}|^2 + |\boldsymbol{a}^m|^2 |\boldsymbol{b}^{m+1}|^2
- 2 (\boldsymbol{a}^m\cdot \boldsymbol{b}^m)(\boldsymbol{a}^{m+1}\cdot \boldsymbol{b}^{m+1})}{|\boldsymbol{a}^m \times \boldsymbol{b}^m|^2} \nonumber\\[0.5em]
%&\ge \frac{2 |\boldsymbol{a}^m||\boldsymbol{b}^m||\boldsymbol{a}^{m+1}||\boldsymbol{b}^{m+1}|
%- 2 (\boldsymbol{a}^m\cdot \boldsymbol{b}^m)(\boldsymbol{a}^{m+1}\cdot \boldsymbol{b}^{m+1})}{|\boldsymbol{a}^m \times \boldsymbol{b}^m|^2} \nonumber\\[0.5em]
& \geq \frac{2\sqrt{(\boldsymbol{a}^m\cdot \boldsymbol{b}^m)^2 + |\boldsymbol{a}^m \times \boldsymbol{b}^m|^2}\sqrt{(\boldsymbol{a}^{m+1}\cdot \boldsymbol{b}^{m+1})^2 + |\boldsymbol{a}^{m+1} \times \boldsymbol{b}^{m+1}|^2}}{|\boldsymbol{a}^m \times \boldsymbol{b}^m|^2} \nonumber\\[0.5em]
& \qquad - \frac{2 (\boldsymbol{a}^m\cdot \boldsymbol{b}^m)(\boldsymbol{a}^{m+1}\cdot \boldsymbol{b}^{m+1})}{|\boldsymbol{a}^m \times \boldsymbol{b}^m|^2} \nonumber\\[0.5em]
&\ge\frac{2 |\boldsymbol{a}^m \times \boldsymbol{b}^m||\boldsymbol{a}^{m+1} \times \boldsymbol{b}^{m+1}|}{|\boldsymbol{a}^m \times \boldsymbol{b}^m|^2} =2 \frac{|\boldsymbol{a}^{m+1}\times \boldsymbol{b}^{m+1}|}{|\boldsymbol{a}^m \times \boldsymbol{b}^m|}.
\end{align}
The last inequality follows from the Cauchy inequality
$\sqrt{a^2+b^2}\sqrt{c^2 + d^2}\ge (ac + bd)$.
Combining with \eqref{proof of lemma: area at t_m} and \eqref{proof of lemma: area at t_m+1} gives \eqref{lemma: local estimate 2}.

For the identity \eqref{lemma: local estimate 2}, one computes
\begin{align}
\ds \X^m|_{\sigma_j^m}:\ds \X^m|_{\sigma_j^m}
&= \frac{2 |\boldsymbol{a}^m|^2 |\boldsymbol{b}^m|^2 - 2 (\boldsymbol{a}^m\cdot \boldsymbol{b}^m)^2}{|\boldsymbol{a}^m \times \boldsymbol{b}^m|^2} \nonumber\\
&= 2\frac{ |\boldsymbol{a}^m \times \boldsymbol{b}^m|^2}{|\boldsymbol{a}^m \times \boldsymbol{b}^m|^2} = 2,
\end{align}
which immediately implies \eqref{lemma: local estimate 1}.
\end{proof}
}
Then, we have the following energy stability for the proposed ES-PFEM \eqref{eqn:full wm}.
\begin{theorem}[Unconditional energy dissipation of \eqref{eqn:full wm}]
Let $\left(\X^{m+1}, V^{m+1} ,\cur^{m+1}\right)$ be a solution of the ES-PFEM \eqref{eqn:full wm} with the initial data $\X^0$ and $\cur^0$. Then it holds that the discretized Willmore energy \eqref{discretized Willmore energy} decreases, i.e.
\begin{equation}\label{fulld stability}
    W^{m+1}\leq W^m \leq W^0 =  \frac{1}{2}\left(\cur^0, \cur^0\right)_{\Gamma^0}^h, \qquad m\ge0.
\end{equation}
\end{theorem}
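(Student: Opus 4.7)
The plan is to reproduce, step by step, the continuous energy calculation from Theorem~\ref{thm:continuous energy dissipation } at the fully discrete level. I would test \eqref{eqn:full1 wm} with $\phi^h=V^{m+1}$ (then multiply both sides by $\tau$), \eqref{eqn:full2 wm} with $\w^h=\X^{m+1}-\X^m$, and \eqref{eqn:full3 wm} with $\varphi^h=\cur^{m+1}$, mirroring the choices $\phi=-V$, $\w=\dt\X$, $\varphi=\cur$ used in the continuous proof. Because the mass-lumped inner products satisfy $\bigl((\X^{m+1}-\X^m)\cdot\n^m,V^{m+1}\bigr)^h_{\Gamma^m}=\bigl(V^{m+1}\n^m,\X^{m+1}-\X^m\bigr)^h_{\Gamma^m}$, the first equation yields $(V^{m+1}\n^m,\X^{m+1}-\X^m)^h_{\Gamma^m}=\tau(V^{m+1},V^{m+1})^h_{\Gamma^m}$. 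Adding the identities obtained from \eqref{eqn:full2 wm} and \eqref{eqn:full3 wm}, and exploiting the symmetry of $\langle\cdot,\cdot\rangle^h_{\Gamma^m}$ to cancel the terms involving $\cur^{m+1}\mathbf{A}^m$ and $\n^m(\ds\cur^{m+1})^T$, I expect to reach the fully discrete energy identity
\begin{equation*}
(\cur^{m+1}-\cur^m,\cur^{m+1})^h_{\Gamma^m}+\tfrac{1}{2}\bigl\langle(\cur^{m+1})^2\ds\X^{m+1},\ds(\X^{m+1}-\X^m)\bigr\rangle^h_{\Gamma^m}=-\tau(V^{m+1},V^{m+1})^h_{\Gamma^m},
\end{equation*}
the exact discrete analogue of \eqref{pf4 eq2}.

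The next step is to bound each term on the left from below by a piece of $W^{m+1}-W^m$. The pointwise inequality $a(a-b)\ge\tfrac{1}{2}(a^2-b^2)$ applied at every vertex gives $(\cur^{m+1}-\cur^m,\cur^{m+1})^h_{\Gamma^m}\ge\tfrac{1}{6}\sum_{j,k}|\sigma_j^m|(\cur^{m+1}(\q_{j_k}^m))^2-W^m$. For the domain-change term I would appeal to the element-wise BGN area inequality $|\sigma_j^m|\,\ds\X^{m+1}:\ds(\X^{m+1}-\X^m)\ge|\sigma_j^{m+1}|-|\sigma_j^m|$ on each $\sigma_j^m$, which rests on the convexity of the triangle-area functional and is precisely the estimate quoted in the remark after Lemma~\ref{transport thoerem}. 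Since the lumped weight $\tfrac{1}{3}\sum_k(\cur^{m+1}(\q_{j_k}^m))^2$ is nonnegative, multiplying the element-wise inequality by it preserves the direction; summing over $j$ then produces the correction $\tfrac{1}{6}\sum_{j,k}(|\sigma_j^{m+1}|-|\sigma_j^m|)(\cur^{m+1}(\q_{j_k}^m))^2$. Adding the two lower bounds promotes the $|\sigma_j^m|$ weights to $|\sigma_j^{m+1}|$, and identifying $\cur^{m+1}(\q_{j_k}^m)=\cur^{m+1}(\q_{j_k}^{m+1})$ (common nodal values of $\cur^{m+1}$ viewed on either $\Gamma^m$ or $\Gamma^{m+1}$) yields $W^{m+1}-W^m\le-\tau(V^{m+1},V^{m+1})^h_{\Gamma^m}\le 0$. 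Iterating in $m$ then delivers \eqref{fulld stability}.

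The main obstacle is the discrete treatment of the domain-change term: the element-wise BGN area inequality must be multiplied by the non-constant, vertex-dependent weight $(\cur^{m+1})^2$ without losing the estimate, which works only because the lumped-mass quadrature keeps the weight nonnegative on each element. A closely related structural subtlety, inherited from Section~2, is that the discrete Weingarten map $\mathbf{A}^m$ has to cancel exactly between the discrete analogues of \eqref{continuous, test function V} and \eqref{continuous, test function H}; this cancellation is what permits the use of piecewise linear finite elements throughout, since $\mathbf{A}^m$ is never differentiated in the final bound.
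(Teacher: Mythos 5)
Your proposal is correct and follows essentially the same route as the paper: the same test functions (up to a sign convention in $\phi^h$), the same discrete energy identity matching \eqref{aeq 0}, the scalar convexity inequality $a(a-b)\ge\tfrac12(a^2-b^2)$ for the curvature term, and the element-wise BGN area estimate weighted by the nonnegative lumped factor $\tfrac13\sum_k(\cur^{m+1}_{j_k})^2$ for the domain-change term, assembled exactly as in \eqref{aeq 4}--\eqref{aeq 5}. The only cosmetic difference is that you quote the BGN area inequality in its combined form $|\sigma_j^m|\,\ds\X^{m+1}:\ds(\X^{m+1}-\X^m)\ge|\sigma_j^{m+1}|-|\sigma_j^m|$, whereas the paper derives it in two steps via the matrix inequality $\boldsymbol{M}:(\boldsymbol{M}-\boldsymbol{N})\ge\tfrac12\boldsymbol{M}:\boldsymbol{M}-\tfrac12\boldsymbol{N}:\boldsymbol{N}$ together with \eqref{aeq 3}.
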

\begin{proof}
Choose $\phi^h=-\tau V^{m+1} \in \mathbb{K}^m$ in \eqref{eqn:full1 wm}, $\w=\X^{m+1}-\X^m \in [\mathbb{K}^m]^3$ in \eqref{eqn:full2 wm}, and $\varphi^h = \cur^{m+1} \in \mathbb{K}^m$  in \eqref{eqn:full3 wm}, we obtain
\begin{align}
\label{test func 1}
-\left((\X^{m+1}-\X^m)\cdot \n^m, V^{m+1}\right)_{\Gamma^m}^h=-\tau\left(V^{m+1},\, V^{m+1}\right)_{\Gamma^m}^h,
\end{align}
\begin{align}
\label{test func 2}
\left(V^{m+1}\n^m,\X^{m+1}-\X^m\right)_{\Gamma^m}^h=&\Big\langle\cur^{m+1} \mathbf{A}^m-\n^m(\ds \cur^{m+1})^T,\, \ds (\X^{m+1}-\X^m)\Big\rangle_{\Gamma^m}^h\nonumber\\
 &-\Big\langle\frac{1}{2}(\cur^{m+1})^2 \ds \X^{m+1},\, \ds (\X^{m+1}-\X^m)\Big\rangle_{\Gamma^m}^h,
\end{align}
\begin{align}
\label{test func 3}
 &\left(\cur^{m+1}-\cur^m, \,\cur^{m+1}\right)_{\Gamma^m}^h = \Big\langle \ds(\X^{m+1}-\X^m), \n^m (\ds \cur^{m+1})^T -\cur^{m+1} \mathbf{A}^m \Big\rangle_{\Gamma^m}^h.
\end{align}
Summing \eqref{test func 1}, \eqref{test func 2} and \eqref{test func 3}, we derive that
\begin{align}\label{aeq 0}
	&\left(\cur^{m+1}-\cur^m, \,\cur^{m+1}\right)_{\Gamma^m}^h + \Big\langle\frac{1}{2}(\cur^{m+1})^2\ds \X^{m+1},\, \ds\X^{m+1}-\ds \X^m\Big\rangle_{\Gamma^m}^h \nonumber\\
    &=-\tau\left(V^{m+1},\, V^{m+1}\right)_{\Gamma^m}^h \leq 0, \qquad m\ge0.
\end{align}  
By applying the inequality $a(a-b)\geq \frac{1}{2}a^2- \frac{1}{2}b^2$, we know that 
\begin{align}\label{aeq 1}
    \left(\cur^{m+1}-\cur^m, \cur^{m+1}\right)_{\Gamma^m}^h&\geq \frac{1}{2}\left(\cur^{m+1}, \cur^{m+1}\right)_{\Gamma^m}^h -\frac{1}{2}\left(\cur^{m},\cur^{m}\right)_{\Gamma^m}^h\nonumber\\[0.5em]
&= \frac{1}{2}\left(\cur^{m+1}, \cur^{m+1}\right)_{\Gamma^m}^h-W^m.
\end{align}
Similarly, for any $\boldsymbol{M} = (m_{ij})$ and $\boldsymbol{N} = (n_{ij})$ in $\bR^{3\times 3 }$, we have
\begin{align}\label{matrix basic inequality}
\boldsymbol{M}:(\boldsymbol{M}-\boldsymbol{N}) &= \sum_{i = 1}^3\sum_{j = 1}^3m_{ij}(m_{ij} -n_{ij})\geq \frac{1}{2}\sum_{i = 1}^3\sum_{j = 1}^3(m_{ij})^2-\frac{1}{2}\sum_{i = 1}^3\sum_{j = 1}^3(n_{ij})^2\nonumber\\
&=\frac{1}{2}\boldsymbol{M}:\boldsymbol{M}-\frac{1}{2}\boldsymbol{N}:\boldsymbol{N}.
\end{align}
  { In the following, We denote $\cur_{j_k}^{m+1} = \cur^{m+1}(\q_{j_k}^m)$ . Thus, it follows from \eqref{matrix basic inequality} that}
\begin{align}\label{aeq 2}
&\Big\langle\frac{1}{2}(\cur^{m+1})^2\ds \X^{m+1},\, \ds \X^{m+1}-\ds \X^m\Big\rangle_{\Gamma^m}^h\nonumber\\[0.5em]
&\geq \frac{1}{4}\Big\langle(\cur^{m+1})^2\ds \X^{m+1},\, \ds \X^{m+1}\Big\rangle_{\Gamma^m}^h- \frac{1}{4}\Big\langle(\cur^{m+1})^2\ds \X^{m},\, \ds \X^{m}\Big\rangle_{\Gamma^m}^h\nonumber\\[0.5em]
&=\frac{1}{4}\sum_{j = 1}^J|\sigma_j^m|\left(\ds \X^{m+1}\Big|_{\sigma_j^m}: \ds \X^{m+1}\Big|_{\sigma_j^m}\right) \sum_{k = 1}^3\frac{(\cur_{j_k}^{m+1})^2}{3}\nonumber\\
&\ -\frac{1}{4}\sum_{j = 1}^J|\sigma_j^m|\left(\ds \X^m\Big|_{\sigma_j^m}: \ds \X^m\Big|_{\sigma_j^m}\right) \sum_{k = 1}^3\frac{(\cur_{j_k}^{m+1})^2}{3}\nonumber\\[0.5em]
&\geq \frac{1}{2} \sum_{j=1}^J\sum_{k=1}^3\frac{(\cur^{m+1}_{j_k})^2}{3}(|\sigma_j^{m+1}|-|\sigma_j^m|),
\end{align}
{where the last inequality is due to \eqref{lemma: local estimate 2} and \eqref{lemma: local estimate 1} in Lemma \ref{lemma: local estimate}.} Moreover, we have the following identity of $W^{m+1}$,
\begin{align}
    \label{aeq 4}
    W^{m+1}& =  \frac{1}{2}\sum_{j = 1}^J\sum_{k = 1}^3 \frac{(\cur^{m+1}_{j_k})^2}{3} |\sigma_j^{m+1}| \nonumber\\
    & = \frac{1}{2}\sum_{j = 1}^J \sum_{k = 1}^3 \frac{(\cur^{m+1}_{j_k})^2}{3}|\sigma_j^m| +\frac{1}{2}\sum_{j = 1}^J  \sum_{k = 1}^3 \frac{(\cur^{m+1}_{j_k})^2}{3}(|\sigma_j^{m+1}|-|\sigma_j^m|)\nonumber\\
    &=  \frac{1}{2} (\cur^{m+1}, \cur^{m+1})_{\Gamma^m}^h + \frac{1}{2}\sum_{j = 1}^J \sum_{k = 1}^3 \frac{(\cur^{m+1}_{j_k})^2}{3}(|\sigma_j^{m+1}|-|\sigma_j^m|) .
\end{align}
This, together with \eqref{aeq 0}, \eqref{aeq 1} and \eqref{aeq 2}, yields that
\begin{align}\label{aeq 5}
    W^{m+1}-W^m  = &\frac{1}{2} (\cur^{m+1}, \cur^{m+1})_{\Gamma^m}^h - W^m+ \frac{1}{2}\sum_{j = 1}^J  \sum_{k = 1}^3 \frac{(\cur^{m+1}_{j_k})^2}{3}(|\sigma_j^{m+1}|-|\sigma_j^m|)\nonumber\\
    \leq &  \left(\cur^{m+1}-\cur^m, \cur^{m+1}\right)_{\Gamma^m}^h +\Big\langle\frac{1}{2}(\cur^{m+1})^2\ds \X^{m+1},\, \ds \X^{m+1}-\ds \X^m\Big\rangle_{\Gamma^m}^h\nonumber\\[0.5em]
    =& -\tau (V^{m+1}, V^{m+1})_{\Gamma^m}^h \leq 0,
\end{align}
which is the desired energy stability \eqref{fulld stability}.\qed
\end{proof}

\begin{remark}
    In the above proof, we observe that the unconditional energy stability relies on two key ingredients. First, the evolution equation of mean curvature \eqref{new identity for H}, its $\int_{\Gamma(t)} \partial_t \mathcal{H} \phi$ corresponds to the discrete $\left(\cur^{m+1}-\cur^m, \cur^{m+1}\right)_{\Gamma^m}^h$, which controlling the change of curvature. Second, the new transport theorem, its $\int_{\Gamma(t)} \nabla_{\Gamma} \boldsymbol{X}:\nabla_\Gamma \partial_t \boldsymbol{X}$ corresponds to the discrete
    \begin{equation*}
    \Big\langle\ds \X^{m+1},\, \ds \X^{m+1}-\ds \X^m\Big\rangle_{\Gamma^m}^h,
    \end{equation*}
    which captures change of surface analogous to the BGN scheme. Therefore, similar to the BGN scheme \cite{barrett2007parametric}, our method \eqref{new weak form} can be readily extended to higher-order spatial discretizations, see \cite{garcke2025isoparametric}.
\end{remark}

\section{Extension to the curvature-dependent geometric gradient flows}

This section is to extend the ES-PFEM \eqref{eqn:full wm} to the curvature-dependent geometric gradient flows, where the functional $W(\Gamma)$ is generalized to be
\begin{equation}\label{eqn:continuous energy, curvature-dependent geometric gradient flow}
    W(\Gamma) = \int_{\Gamma} f(\cur) \, \dd A,
\end{equation}
where $f(\cur)$ is an arbitrary weakly convex function of the mean curvature $\cur$. The corresponding geometric gradient flow is given by the normal velocity $V$ as \cite{duan2021high}
\begin{equation}\label{eq: curvature-dependent geometric gradient flow}
    V = \lap (f'(\cur)) + |\mathbf{A}|^2f'(\cur) - \cur f(\cur) .
\end{equation}
Specifically, by setting $f(\cur) = \frac{1}{2}\cur^2$, we recover the Willmore energy and the Willmore flow \eqref{eq: Willmore flow}. Furthermore, by setting $f(\cur) \equiv 1$, we have $f'(\cur) \equiv 0$ and $V = -\cur$, which is the mean curvature flow; and by setting $f(\cur) = \cur$, we have $f'(\cur) = 1$ and $V = -2\mathcal{K}$, which is the Gauss curvature flow with $\mathcal{K} = \det(\mathbf{A})$ being the Gaussian curvature.

\subsection{A variational formulation}
Similar to the Willmore flow, in order to derive the variational formulation for the curvature-dependent geometric gradient flow \eqref{eq: curvature-dependent geometric gradient flow}, we need to find a proper weak formulation for $V \n$.  The evolution equation for $\cur$ follows directly from the Willmore flow.

\begin{lemma}\label{lemma: curvature-dependent geometric gradient flow V}
    For a solution $\X$ of the curvature-dependent geometric gradient flow \eqref{eq: curvature-dependent geometric gradient flow} with the scalar normal velocity $V$, it holds that
    \begin{align}
        \label{new identity for V, curvature-dependent geometric gradient flow}
        \int_{\Gamma(t)} V\n\cdot \w \, \dd A&= \int_{\Gamma(t)} \left(f'(\cur) \mathbf{A}  -\n (\ds f'(\cur))^T - f(\cur)\ds \X\right): \ds \w \, \dd A, \nonumber \\
        &\qquad \qquad \quad \forall\, \w \in [H^1(\Gamma(t))]^3.
    \end{align}
\end{lemma}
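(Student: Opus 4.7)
The plan is to mimic the proof of Lemma 2.1 verbatim, with $\cur$ replaced by $f'(\cur)$ in the role of the leading scalar multiplier and $\tfrac{1}{2}\cur^2$ replaced by $f(\cur)$ in the role of the coefficient of $\ds \X$. The target intermediate identity that drives everything is
\begin{equation*}
V\n \;=\; \lap\bigl(f'(\cur)\,\n\bigr)\;-\;\ds\!\cdot\!\Bigl(2 f'(\cur)\,\mathbf{A} - f(\cur)\,\ds \X\Bigr),
\end{equation*}
which, once established, will be tested against $\w$ and integrated by parts exactly as in the Willmore proof. Notice that specializing $f(\cur)=\tfrac{1}{2}\cur^2$ (so $f'(\cur)=\cur$) recovers the intermediate identity used there, so this is a genuine generalization.

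First I would derive the three surface-divergence identities analogous to (2.12a)--(2.12c): apply \eqref{divergence vector vector} with $\boldsymbol{f}=\n$, $\boldsymbol{g}=\ds f'(\cur)$ to get $\ds\!\cdot\!(\n(\ds f'(\cur))^T)=\mathbf{A}\ds f'(\cur)+\lap f'(\cur)\,\n$; apply \eqref{divergence scalar matrix} with $f=f'(\cur)$, $\boldsymbol{G}=\mathbf{A}$ to get $\ds\!\cdot\!(f'(\cur)\ds\n)=\mathbf{A}\ds f'(\cur)+f'(\cur)\lap\n$; and apply \eqref{divergence scalar matrix} with $f=f(\cur)$, $\boldsymbol{G}=\ds\X$, together with $\lap\X=-\cur\n$, $\ds\X=I_3-\n\n^T$, and $\n\cdot\ds f(\cur)=0$, to get $\ds\!\cdot\!(f(\cur)\ds\X)=\ds f(\cur)-\cur f(\cur)\,\n$. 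Combining the first two via \eqref{gradient multi scalar} yields $\lap(f'(\cur)\n)=2\mathbf{A}\ds f'(\cur)+f'(\cur)\lap\n+\lap f'(\cur)\,\n$.

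Next, substitute these three expansions into the candidate identity for $V\n$. The $\mathbf{A}\ds f'(\cur)$ pieces cancel in pairs, leaving $-f'(\cur)\lap\n+\lap f'(\cur)\,\n - \ds f(\cur)+\cur f(\cur)\,\n$. Here the key simplification step is the chain rule $\ds f(\cur)=f'(\cur)\ds\cur$ combined with Lemma 16 of \cite{Barrett2020}, i.e.\ $\ds\cur=\lap\n+|\mathbf{A}|^2\n$, which turns $-f'(\cur)\lap\n-\ds f(\cur)=-f'(\cur)\lap\n-f'(\cur)\ds\cur$ into $-f'(\cur)\bigl(2\lap\n+|\mathbf{A}|^2\n\bigr)$; redistributing one copy back through $\ds\cur$ yields precisely $f'(\cur)|\mathbf{A}|^2\n$ in the normal direction, matching the middle term of $V=\lap f'(\cur)+|\mathbf{A}|^2 f'(\cur)-\cur f(\cur)$. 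This is the only step that is more than bookkeeping and is what I expect to be the main obstacle; the Willmore case hid it inside the cubic $\tfrac{1}{2}\cur^3$ term, whereas here one must be careful to separate the chain-rule contribution from the Weingarten-map contribution.

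Finally, I would multiply the established identity by $\w$, integrate over $\Gamma(t)$, and apply the divergence theorem (Theorem 21 of \cite{Barrett2020}) together with \eqref{divergence matrix vector} and \eqref{gradient multi scalar}, exactly as in the last display of the proof of Lemma 2.1. The boundary-type contraction term $\int \n^T\bigl(2 f'(\cur)\mathbf{A}-f(\cur)\ds\X\bigr)\w\,\dd A$ vanishes because $\n^T\mathbf{A}=0$ (differentiating $|\n|^2=1$) and $\n^T\ds\X=\n^T(I_3-\n\n^T)=0$. After using $\ds(f'(\cur)\n)=\n(\ds f'(\cur))^T+f'(\cur)\mathbf{A}$ to expand the remaining bulk term, the coefficient of $\mathbf{A}$ reduces from $2f'(\cur)$ to $f'(\cur)$, producing the right-hand side of \eqref{new identity for V, curvature-dependent geometric gradient flow} and completing the proof.
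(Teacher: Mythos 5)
Your proof is the paper's proof: the same three divergence identities, the same linear combination giving the pointwise identity $V\n = \lap\bigl(f'(\cur)\n\bigr) - \ds\cdot\bigl(2f'(\cur)\mathbf{A} - f(\cur)\ds\X\bigr)$, and the same integration by parts with the contraction term killed by $\mathbf{A}^T\n=\boldsymbol{0}$ and $\n^T\ds\X=\boldsymbol{0}$. The only blemish is a sign slip in your intermediate remainder: substituting the three expansions leaves $\lap f'(\cur)\,\n - f'(\cur)\lap\n + \ds f(\cur) - \cur f(\cur)\,\n$ (note $+\ds f(\cur)$ and $-\cur f(\cur)\,\n$, not the signs you wrote), whereupon $-f'(\cur)\lap\n + f'(\cur)\ds\cur = |\mathbf{A}|^2 f'(\cur)\,\n$ follows at once from $\ds\cur=\lap\n+|\mathbf{A}|^2\n$; with your signs the ``redistribute one copy back'' step would not close, but once the sign is corrected the step you flagged as the main obstacle is an immediate cancellation, exactly as in the paper.
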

\begin{proof}
    By utilizing \eqref{divergence vector vector} with $\boldsymbol{f} = \n$ and $\boldsymbol{g} = \ds f'(\cur)$, \eqref{divergence scalar matrix} with $f = f'(\cur)$ and $\boldsymbol{G} = \mathbf{A}$, and \eqref{divergence scalar matrix} with $f = f(\cur)$ and $\boldsymbol{G} = \ds \X$,  we can derive
    \begin{subequations}
        \begin{align}
            \ds \cdot(\n (\ds f'(\cur))^T) &= \ds\n \ds f'(\cur) + \left(\lap f'(\cur)\right) \n \nonumber\\
            &= \mathbf{A} \ds f'(\cur) + \left(\lap f'(\cur)\right) \n,  \label{pf1 eq1, curvature-dependent}\\
            \ds \cdot (f'(\cur) \ds \n) &= \ds\n \ds f'(\cur)+ f'(\cur) \lap \n \nonumber\\
            &= \mathbf{A} \ds f'(\cur) + f'(\cur) \lap \n,  \label{pf1 eq2, curvature-dependent} \\
            \ds \cdot (f(\cur) \ds \X) &= f'(\cur) \ds \X \ds \cur + f(\cur) \lap \X \nonumber\\
            &= f'(\cur)  \ds \cur - \cur f(\cur) \n.   \label{pf1 eq3, curvature-dependent}
        \end{align}
    \end{subequations}
    Next, we consider the linear combination \eqref{pf1 eq1, curvature-dependent} $-$ \eqref{pf1 eq2, curvature-dependent} $+$ \eqref{pf1 eq3, curvature-dependent}, and invoke relation \eqref{gradient multi scalar} and \eqref{pf1 eq5}, we can get the following strong formulation of $V\n$:

    \begin{align}
        &\lap \left(f'(\cur)\n\right) - \ds \cdot \left(2\left(f'(\cur) \ds \n\right) - \left(f(\cur) \ds \X\right) \right) \nonumber\\
         &\ =  \ds \cdot \left(\left(\n (\ds f'(\cur))^T\right) - \left(f'(\cur) \ds \n\right) + \left(f(\cur) \ds \X\right) \right) \nonumber\\
         &\ = \left(\mathbf{A} \ds f'(\cur) + \left(\lap f'(\cur)\right) \n\right) + \left(f'(\cur)  \ds \cur - \cur f(\cur) \n\right)   \nonumber\\
        & \qquad - \left(\mathbf{A} \ds f'(\cur) + f'(\cur) \left(\ds \cur - |\mathbf{A}|^2 \n\right)\right) \nonumber\\
         &\ = \left(\lap f'(\cur)  + |\mathbf{A}|^2 f'(\cur) - \cur f(\cur) \right) \n \nonumber\\
        &\ = V\n.
    \end{align}
Finally, multiplying both sides of the above equation by $\w$, then integrating over $\Gamma(t)$, and 
applying the product rule \eqref{divergence matrix vector}, \eqref{gradient multi scalar}, and the divergence theorem  with $\boldsymbol{f} = \left(2\left(f'(\cur) \ds \n\right) - \left(f(\cur) \ds \X\right)\right)^T \boldsymbol{\omega}$, we have
\begin{align}
    \int_{\Gamma(t)}V\n \cdot \w\, \dd A=& \int_{\Gamma(t)}\left( \lap \left(f'(\cur)\n\right)- \ds\cdot \left(2\left(f'(\cur) \ds \n\right) - \left(f(\cur) \ds \X\right)\right) \right)\cdot \w\,\dd A\nonumber\\
    =&\int_{\Gamma(t)} \left(-\ds(f'(\cur)\n)+ \left(2 f'(\cur) \mathbf{A}- f(\cur) \ds\X\right)\right):\ds\w\,\dd A\nonumber\\
    &-\int_{\Gamma(t)} \cur\n^T(2f'(\cur) \mathbf{A}- f(\cur) \ds\X) \w\,\dd A.
\end{align}
\begin{align}
    \int_{\Gamma(t)}V\n \cdot \w\, \dd A=&\int_{\Gamma(t)} \left(-\n(\ds f'(\cur))^T-f'(\cur) \mathbf{A} \right):\ds\w\,\dd A\nonumber\\
    & \qquad + \int_{\Gamma(t)} \left(2 f'(\cur) \mathbf{A}- f(\cur) \ds\X\right): \ds \w\,\dd A \nonumber\\
    =&\int_{\Gamma(t)} \left(f'(\cur) \mathbf{A}  -\n (\ds f'(\cur))^T - f(\cur)\ds \X\right):\ds\w\,\dd A,
\end{align}
which yields the result \eqref{new identity for V, curvature-dependent geometric gradient flow}.\qed
\end{proof}

Next, we adopt the temporal evolution of $\cur$ derived in Lemma \ref{lemma H}, thereby deriving the following variational formulation for \eqref{eq: curvature-dependent geometric gradient flow}: Given the initial data $\X(\cdot, 0):= \X_0(\cdot)$ and $\cur(\cdot, 0):= -\n \cdot \lap \X_0(\cdot)$, find the solution $\X(\cdot,t)\in [H^1(\Gamma(t))]^3$, $V(\cdot,t)\in H^1(\Gamma(t))$, and $\cur(\cdot,t)\in H^1(\Gamma(t))$, such that
\begin{subequations}\label{new weak form, curvature-dependent geometric gradient flow}
\begin{align}
    &\left(\n \cdot \dt \X, \phi\right)_{\Gamma(t)} = \left(V, \phi\right)_{\Gamma(t)}, \quad \forall \phi \in H^1(\Gamma(t)),    
    \label{new weak form X, curvature-dependent geometric gradient flow}\\
    &\left(V\n, \w\right)_{\Gamma(t)} = \Big\langle f'(\cur) \mathbf{A} - \n(\ds f'(\cur))^T- f(\cur)\ds \X, \ds\w\Big\rangle_{\Gamma(t)},\nonumber \\
    & \qquad\qquad \hspace{3cm} \forall \w \in [H^1(\Gamma(t))]^3, \label{new weak form V, curvature-dependent geometric gradient flow}\\
    &\left(\dt\cur, \varphi\right)_{\Gamma(t)} = \big\langle\ds(\dt \X), \n (\ds \varphi)^T- \varphi  \mathbf{A} \big\rangle_{\Gamma(t)},\quad \forall   \varphi\in H^1(\Gamma(t)).\label{new weak form H, curvature-dependent geometric gradient flow}
\end{align}
\end{subequations}
If we take $f(\cur) = \frac{1}{2}\cur^2$, we observe that $f'(\cur) = \cur$, and this weak formulation \eqref{new weak form, curvature-dependent geometric gradient flow} goes to the weak formulation \eqref{new weak form} for the Willmore flow. %Here, the left-hand side is $\left(\dt\cur, \cur \varphi\right)_{\Gamma(t)}$, whereas in the Willmore flow, it is $\left(\dt\cur, \varphi\right)_{\Gamma(t)} $. 

For this weak formulation, we establish the following energy stability result.

\begin{theorem}[Energy dissipation]\label{thm:continuous energy dissipation, curvature-dependent geometric gradient flow}
    Let $\left(\X(\cdot,t), V(\cdot, t), \cur(\cdot,t)\right)$ be a sufficiently smooth solution of the new variational formulation \eqref{new weak form, curvature-dependent geometric gradient flow} with the initial data 
   $\X(\cdot,0)$ and $\cur(\cdot,0)$. 
   Then the curvature-dependent energy $W(\Gamma(t))$ defined in \eqref{eqn:continuous energy, curvature-dependent geometric gradient flow} is decreasing over time,
    \begin{equation}
    \label{eqn: continuous energy dissipation, curvature-dependent geometric gradient flow}
        W(t) \leq W(t')\leq W(0) = \int_{\Gamma_0} f(\cur_0) \,\dd A, \quad \forall t \geq t'\geq 0.
    \end{equation}
\end{theorem}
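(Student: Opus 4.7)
The plan is to mirror the architecture of the proof of Theorem \ref{thm:continuous energy dissipation } for the Willmore flow, adapting the test functions to accommodate the general energy density $f(\cur)$ in place of $\tfrac12\cur^2$. First, I would apply the new transport theorem (Lemma \ref{transport thoerem}) to the integrand $f(\cur)$; together with the chain rule $\dt (f(\cur)) = f'(\cur)\,\dt\cur$ for the material derivative, this immediately yields
\begin{equation*}
    \frac{\dd}{\dd t} W(t) = \left(f'(\cur),\, \dt\cur\right)_{\Gamma(t)} + \Big\langle f(\cur)\,\ds \X,\; \ds(\dt\X)\Big\rangle_{\Gamma(t)}, \qquad t \ge 0.
\end{equation*}
The remaining task is to reproduce the right-hand side by a judicious choice of test functions in the variational formulation \eqref{new weak form, curvature-dependent geometric gradient flow}.

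The correct selection, and really the only nontrivial step, is $\phi = V$ in \eqref{new weak form X, curvature-dependent geometric gradient flow}, $\w = \dt \X$ in \eqref{new weak form V, curvature-dependent geometric gradient flow}, and $\varphi = f'(\cur)$ in \eqref{new weak form H, curvature-dependent geometric gradient flow}. The first two, combined through $(V\n, \dt \X)_{\Gamma(t)} = (\n\cdot \dt \X, V)_{\Gamma(t)}$, give
\begin{equation*}
    (V, V)_{\Gamma(t)} = \Big\langle f'(\cur)\mathbf{A} - \n\,(\ds f'(\cur))^T - f(\cur)\,\ds\X,\; \ds(\dt\X)\Big\rangle_{\Gamma(t)},
\end{equation*}
while the third, after using the symmetry of the Hilbert--Schmidt inner product, gives
\begin{equation*}
    \left(\dt\cur,\, f'(\cur)\right)_{\Gamma(t)} = -\Big\langle f'(\cur)\mathbf{A} - \n\,(\ds f'(\cur))^T,\; \ds(\dt\X)\Big\rangle_{\Gamma(t)}.
\end{equation*}

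Adding these two displays, the $f'(\cur)\mathbf{A}$ and $\n(\ds f'(\cur))^T$ contributions cancel exactly, leaving $(V,V)_{\Gamma(t)} + (\dt\cur, f'(\cur))_{\Gamma(t)} = -\langle f(\cur)\ds\X, \ds(\dt\X)\rangle_{\Gamma(t)}$. Substituting this back into the transport-theorem identity above, the domain-change term cancels as well, yielding $\frac{\dd}{\dd t} W(t) = -(V,V)_{\Gamma(t)} \le 0$, from which \eqref{eqn: continuous energy dissipation, curvature-dependent geometric gradient flow} follows by integration in time.

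The main conceptual point, which I expect is the only real obstacle, is recognizing that $\varphi = f'(\cur)$ is the correct test function for the curvature-evolution identity: it is dictated by the chain rule inside Lemma \ref{transport thoerem}, and it is exactly what makes Lemma \ref{lemma: curvature-dependent geometric gradient flow V} match term-by-term with the Weingarten-map and normal-gradient contributions produced by \eqref{new weak form H, curvature-dependent geometric gradient flow}. Notably, no convexity of $f$ is used at the continuous level; the weakly convex hypothesis is reserved for the fully-discrete analysis, where a pointwise convexity-type inequality will be needed to dominate the discrete temporal increment of $f(\cur)$.
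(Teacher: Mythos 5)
Your proposal is correct and follows exactly the route the paper intends: the paper omits the proof, stating only that it is "completely analogous" to the Willmore case with the test function $\varphi = f'(\cur)$ in the curvature-evolution equation, which is precisely your choice (your use of $\phi = V$ rather than $-V$ is an immaterial sign convention). Your closing remarks — that the chain rule $\dt(f(\cur)) = f'(\cur)\,\dt\cur$ drives the choice of $\varphi$, and that convexity of $f$ is only needed at the discrete level — are both accurate and consistent with the paper's discrete-stability proof.
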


The proof follows completely analogous to that of the Willmore flow, with the only difference being that in the third equation we choose the test function $\varphi = f'(\cur)$. We therefore omit the detailed proof here for brevity.

\subsection{An energy-stable full discretization}
We then propose the following unconditionally energy-stable full discretization for the new variational formulation \eqref{new weak form, curvature-dependent geometric gradient flow}: Given the initial data of polygonal surface $\Gamma^0$: $\X^0 (\cdot) \in [\mathbb{K}^0]^3$  and $\cur^0(\cdot)\in \mathbb{K}^0$, for any $m\geq 0$, find the solution $\left(\X^{m+1},V^{m+1},\cur^{m+1}\right) \in [\mathbb{K}^m]^3 \times \mathbb{K}^m\times \mathbb{K}^m$, such that
\begin{subequations}
    \label{eqn:full curvature-dependent geometric gradient flow}
    \begin{align}
    \label{eqn:full1 curvature-dependent geometric gradient flow}
    &\left(\frac{\X^{m+1}-\X^m}{\tau}\cdot \n^m, \phi^h\right)_{\Gamma^m}^h = \left(V^{m+1}, \phi^h\right)_{\Gamma^m}^h, 
    \quad \forall \phi^h\in \mathbb{K}^m,\\
    \label{eqn:full2 curvature-dependent geometric gradient flow}
    &\left(V^{m+1}\n^m,\w^h\right)_{\Gamma^m}^h = \left\langle f'(\cur^{m+1})
    \mathbf{A}^{m}-{\n^m(\ds I_h(f'(\cur^{m+1})))^T}, \ds \w^h\right\rangle_{\Gamma^m}^h \nonumber\\
    &\qquad\qquad\qquad - \left\langle f(\cur^{m+1}) \ds \X^{m+1}, \ds \w^h\right\rangle_{\Gamma^m}^h,
    \quad \forall \w^h \in [\mathbb{K}^m]^3,\\
    \label{eqn:full3 curvature-dependent geometric gradient flow}
    &\left(\cur^{m+1}-\cur^m, \varphi^h\right)_{\Gamma^m}^h = \left\langle \ds(\X^{m+1}-\X^m), \n^m (\ds \varphi^h)^T \right\rangle_{\Gamma^m}^h \nonumber\\
    &\qquad\qquad\qquad \qquad \qquad - \left\langle \ds(\X^{m+1}-\X^m), \varphi^h \mathbf{A}^{m} \right\rangle_{\Gamma^m}^h,
    \ \ \forall \varphi^h \in \mathbb{K}^m;
    \end{align}
    \end{subequations}
where $ \mathbf{A}^{m}$ is the numerical approximation of extended Weingarten map $\ds\n$ given by \eqref{eqn:A^m}, {and $I_h$ is the standard interpolation operator defined in \cite[Definition 43(i)]{Barrett2020}}.

Define the discretized energy as
\begin{equation}
    \label{discretized curvature-dependent energy}
    W^m \coloneqq \left(f(\cur^m), 1\right)_{\Gamma^m}^h = \frac{1}{3}\sum_{j = 1}^J\sum_{k = 1}^3|\sigma_{j}^m| f\left(\cur^m\left(\q_{j_k}^m\right)\right), \qquad m\ge0.
\end{equation}
Then, we have the following energy stability for the proposed ES-PFEM \eqref{eqn:full curvature-dependent geometric gradient flow}.
\begin{theorem}[Unconditional energy dissipation of \eqref{eqn:full curvature-dependent geometric gradient flow}]{
Suppose that $f \in C^1(\mathbb{R})$ is nonnegative and weakly convex. Let $\left(\X^{m+1}, V^{m+1}, \cur^{m+1}\right)$ be a solution of the ES-PFEM \eqref{eqn:full curvature-dependent geometric gradient flow} starting from the initial data $\left(\X^0,\cur^0\right)$. Then the discretized curvature-dependent energy \eqref{discretized curvature-dependent energy} decreases for any $\tau>0$:
\begin{equation}\label{fulld stability, curvature-dependent geometric gradient flow}
    W^{m+1}\leq W^m \leq W^0 =  \left(f(\cur^0), 1\right)_{\Gamma^0}^h, \qquad m\ge0.
\end{equation}}
\end{theorem}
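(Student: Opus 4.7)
The plan is to mirror the Willmore argument, with the only essential modification being that the test function $\varphi^h = \cur^{m+1}$ used there is replaced here by $\varphi^h \in \mathbb{K}^m$ equal to the nodal interpolant of $f'(\cur^{m+1})$. Because the inner products in \eqref{eqn:full curvature-dependent geometric gradient flow} are mass-lumped and therefore only see values at the vertices $\q_{j_k}^m$, this nodal choice is fully compatible with the implicit interpolation already used to make sense of $\ds f'(\cur^{m+1})$ in \eqref{eqn:full2 curvature-dependent geometric gradient flow}. I would then test \eqref{eqn:full1 curvature-dependent geometric gradient flow}, \eqref{eqn:full2 curvature-dependent geometric gradient flow}, and \eqref{eqn:full3 curvature-dependent geometric gradient flow} with $\phi^h = -\tau V^{m+1}$, $\w^h = \X^{m+1}-\X^m$, and this $\varphi^h$, respectively.

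Summing the three tested equations, the left-hand side of \eqref{eqn:full1 curvature-dependent geometric gradient flow} cancels against that of \eqref{eqn:full2 curvature-dependent geometric gradient flow}, leaving only the kinetic term $-\tau(V^{m+1},V^{m+1})^h_{\Gamma^m}$ on the right; and by symmetry of the Hilbert--Schmidt inner product the $f'(\cur^{m+1})\mathbf{A}^{m}$ and $\n^m(\ds f'(\cur^{m+1}))^T$ contributions from \eqref{eqn:full2 curvature-dependent geometric gradient flow} cancel exactly with their counterparts from \eqref{eqn:full3 curvature-dependent geometric gradient flow}. This produces the fundamental identity
\begin{equation*}
    \bigl(\cur^{m+1}-\cur^m,\, f'(\cur^{m+1})\bigr)^h_{\Gamma^m}
    + \bigl\langle f(\cur^{m+1})\ds\X^{m+1},\, \ds\X^{m+1}-\ds\X^m\bigr\rangle^h_{\Gamma^m}
    = -\tau\bigl(V^{m+1},V^{m+1}\bigr)^h_{\Gamma^m},
\end{equation*}
the direct analogue of \eqref{aeq 0} for the Willmore flow. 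I then estimate each left-hand term from below. Weak convexity of $f$, applied vertex-wise in the form $f'(b)(b-a) \ge f(b)-f(a)$, yields after mass-lumped summation the bound $(\cur^{m+1}-\cur^m, f'(\cur^{m+1}))^h_{\Gamma^m} \ge (f(\cur^{m+1}),1)^h_{\Gamma^m} - W^m$. The elementary matrix inequality $\mathbf{M}:(\mathbf{M}-\mathbf{N}) \ge \tfrac12\mathbf{M}:\mathbf{M} - \tfrac12\mathbf{N}:\mathbf{N}$, weighted vertex-wise by the nonnegative nodal values $f(\cur^{m+1}_{j_k})$ and combined with the area inequalities of Lemma~2.1 of \cite{barrett2008parametric1} together with the direct expansion of $W^{m+1}$ as in \eqref{aeq 4}, produces $\langle f(\cur^{m+1})\ds\X^{m+1}, \ds\X^{m+1}-\ds\X^m\rangle^h_{\Gamma^m} \ge W^{m+1} - (f(\cur^{m+1}),1)^h_{\Gamma^m}$. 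Adding these two bounds, $(f(\cur^{m+1}),1)^h_{\Gamma^m}$ cancels, giving $W^{m+1}-W^m \le -\tau(V^{m+1},V^{m+1})^h_{\Gamma^m} \le 0$; iterating in $m$ delivers \eqref{fulld stability, curvature-dependent geometric gradient flow}.

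The principal obstacle is the pointwise sign condition $f(\cur^{m+1}_{j_k}) \ge 0$ tacitly required by the matrix step: weighting the inequality $\mathbf{M}:(\mathbf{M}-\mathbf{N}) \ge \tfrac12\mathbf{M}:\mathbf{M} - \tfrac12\mathbf{N}:\mathbf{N}$ by a negative nodal value would reverse its direction and break the analogue of \eqref{aeq 2}--\eqref{aeq 3}. This is automatic for the Willmore flow ($f = \tfrac12\cur^2$) and the mean curvature flow ($f \equiv 1$); for the Gauss curvature flow ($f(\cur) = \cur$) it reduces to the discrete mean curvature remaining nonnegative at every vertex, the natural discrete counterpart of the convexity preservation enjoyed by smooth Gauss curvature evolution. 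All remaining steps are direct transcriptions of the corresponding manipulations in the Willmore proof, so no further difficulty arises beyond this sign issue.
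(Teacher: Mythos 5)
Your proof follows essentially the same route as the paper's: identical test functions (the paper writes your nodal interpolant as $I_h(f'(\cur^{m+1}))$ and records the two compatibility identities for mass lumping and the discrete gradient that you invoke), the same summation and cancellation to reach the fundamental identity, the same vertex-wise convexity bound, and the same weighted matrix/area argument for the domain-change term. Your remark about the tacit sign condition $f(\cur^{m+1}_{j_k})\ge 0$ is a genuine and accurate observation: for the step corresponding to \eqref{aeq 2, curvature-dependent geometric gradient flow} the paper only says ``using the same argument as in the Willmore flow,'' and that argument does require nonnegative nodal weights --- automatic for $f=\tfrac12\cur^2$ and $f\equiv 1$, but an unstated hypothesis for cases such as the Gauss curvature flow on a surface whose discrete mean curvature changes sign.
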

\begin{proof}
For any scalar functions $f$ and $g$, by utilizing the mass-lumped inner product,  we know that 
\begin{equation}\label{equiv1}
  (f, I_h(g))_{\Gamma^m}^h = (f,g)_{\Gamma^m}^h,
\end{equation}
% 
% and by the definition \eqref{discretized surface gradient} of surface gradient
% \begin{equation}\label{equiv2}
%     \ds f\Big|_{\sigma_j^m} = \ds I_h(f)\Big|_{\sigma_j^m}.
% \end{equation}

    We follow the same proof strategy as in the Willmore flow. Let $\phi^h=-\tau V^{m+1} \in \mathbb{K}^m$ in \eqref{eqn:full1 curvature-dependent geometric gradient flow}, $\w=\X^{m+1}-\X^m \in [\mathbb{K}^m]^3$ in \eqref{eqn:full2 curvature-dependent geometric gradient flow}, and $\varphi^h = I_h(f'(\cur^{m+1})) \in \mathbb{K}^m$ in \eqref{eqn:full3 curvature-dependent geometric gradient flow}, we obtain
    \begin{subequations}
        \begin{align}
        \label{test func 1, curvature-dependent geometric gradient flow}
        &-\left((\X^{m+1}-\X^m)\cdot \n^m, V^{m+1}\right)_{\Gamma^m}^h = -\tau\left(V^{m+1},\, V^{m+1}\right)_{\Gamma^m}^h,\\[0.5em]
        \label{test func 2, curvature-dependent geometric gradient flow}
        &\left(V^{m+1}\n^m,\X^{m+1}-\X^m\right)_{\Gamma^m}^h = \Big\langle f'(\cur^{m+1})) \mathbf{A}^{m},\, \ds (\X^{m+1}-\X^m)\Big\rangle_{\Gamma^m}^h\nonumber\\
        &  \qquad\qquad\qquad\qquad\qquad\quad\quad-\Big\langle {\n^m(\ds I_h(f'(\cur^{m+1})))^T},\, \ds (\X^{m+1}-\X^m)\Big\rangle_{\Gamma^m}^h\nonumber \\
        &\qquad\qquad\qquad\qquad\qquad\quad\quad-\Big\langle f(\cur^{m+1}) \ds \X^{m+1},\, \ds (\X^{m+1}-\X^m)\Big\rangle_{\Gamma^m}^h,\\[0.5em]
        \label{test func 3, curvature-dependent geometric gradient flow}
        &\left(\cur^{m+1}-\cur^m, I_h(f'(\cur^{m+1}))\right)_{\Gamma^m}^h = \Big\langle \ds(\X^{m+1}-\X^m), \n^m (\ds I_h(f'(\cur^{m+1})))^T  \Big\rangle_{\Gamma^m}^h \nonumber \\
        &\qquad\qquad\qquad\qquad\qquad - \Big\langle \ds(\X^{m+1}-\X^m), I_h(f'(\cur^{m+1}))) \mathbf{A}^{m} \Big\rangle_{\Gamma^m}^h.
        \end{align}
        \end{subequations}
   {
By \eqref{equiv1}, \eqref{test func 3, curvature-dependent geometric gradient flow}
is equivalent to
\begin{align}\label{equiv, test func 3, curvature-dependent gradient flow}
    & \left(\cur^{m+1}-\cur^m, f'(\cur^{m+1})\right)_{\Gamma^m}^h
    =
    \Big\langle
    \ds(\X^{m+1}-\X^m),
    \n^m \big(\ds I_h(f'(\cur^{m+1}))\big)^T
    \Big\rangle_{\Gamma^m}^h
    \nonumber \\
    &\qquad\qquad\qquad\qquad\qquad\qquad\quad
    -
    \Big\langle
    \ds(\X^{m+1}-\X^m),
    f'(\cur^{m+1}) \mathbf{A}^{m}
    \Big\rangle_{\Gamma^m}^h .
\end{align}
}
    Summing \eqref{test func 1, curvature-dependent geometric gradient flow}, \eqref{test func 2, curvature-dependent geometric gradient flow} and \eqref{equiv, test func 3, curvature-dependent gradient flow}, we derive that
    \begin{align}\label{aeq 0, curvature-dependent geometric gradient flow}
        &\left(\cur^{m+1}-\cur^m, \,f'(\cur^{m+1})\right)_{\Gamma^m}^h + \Big\langle f(\cur^{m+1})\ds \X^{m+1},\, \ds\X^{m+1}-\ds \X^m\Big\rangle_{\Gamma^m}^h \nonumber\\
        &=-\tau\left(V^{m+1},\, V^{m+1}\right)_{\Gamma^m}^h \leq 0.
    \end{align}  
    By the convexity and nonnegativity of $f$, we know that $f(\cur^{m})\geq f(\cur^{m+1}) + f'(\cur^{m+1})(\cur^{m}-\cur^{m+1})$. Therefore, we have
    \begin{align}\label{aeq 1, curvature-dependent geometric gradient flow}
    \left(\cur^{m+1}-\cur^m, f'(\cur^{m+1})\right)_{\Gamma^m}^h&\geq \left(f(\cur^{m+1}) - f(\cur^m), 1\right)_{\Gamma^m}^h\nonumber\\[0.5em]
    &= \left(f(\cur^{m+1}) , 1\right)_{\Gamma^m}^h-W^m, \qquad m\ge0.
\end{align}
    Using the same argument as in the Willmore flow, we can derive that
    \begin{align}\label{aeq 2, curvature-dependent geometric gradient flow}
        \Big\langle f(\cur^{m+1})\ds \X^{m+1},\, \ds\X^{m+1}-\ds \X^m\Big\rangle_{\Gamma^m}^h \geq W^{m+1} - \left(f(\cur^{m+1}) , 1\right)_{\Gamma^m}^h.
    \end{align}
    All together, we have
    \begin{align}\label{aeq 3, curvature-dependent geometric gradient flow}
        W^{m+1} - W^m \leq -\tau \left(V^{m+1},\, V^{m+1}\right)_{\Gamma^m}^h \leq 0, \qquad m\ge0,
    \end{align}
    which implies the energy dissipation we desired.\qed
\end{proof}

\subsection{Application to the Gauss curvature flow}
We now apply the ES-PFEM \eqref{eqn:full curvature-dependent geometric gradient flow} to the special case of the Gauss curvature flow, i.e., $f(\cur) = \cur$. The Gauss curvature flow is of particular interest for two reasons. First, the linear density function $f(\cur) = \cur$ results in a weakly nonlinear numerical scheme. Second, despite its importance in geometric evolution, there is no prior PFEM proposed for the Gauss curvature flow for surfaces.

For the Gauss curvature flow with $f(\cur) = \cur$, we have $f'(\cur) = 1$, $\nabla_\Gamma f'(\cur) = 0$, and the general scheme \eqref{eqn:full curvature-dependent geometric gradient flow} simplifies to: Given the initial data $\X^0 (\cdot) \in [\mathbb{K}^0]^3$ and $\cur^0(\cdot)\in \mathbb{K}^0$, for any $m\geq 0$, find $\left(\X^{m+1},V^{m+1},\cur^{m+1}\right) \in [\mathbb{K}^m]^3 \times \mathbb{K}^m\times \mathbb{K}^m$ such that
\begin{subequations}
    \label{eqn:full Gauss curvature flow}
    \begin{align}
    \label{eqn:full1 Gauss curvature flow}
    &\left(\frac{\X^{m+1}-\X^m}{\tau}\cdot \n^m, \phi^h\right)_{\Gamma^m}^h = \left(V^{m+1}, \phi^h\right)_{\Gamma^m}^h, 
    \quad \forall \phi^h\in \mathbb{K}^m,\\
    \label{eqn:full2 Gauss curvature flow}
    &\left(V^{m+1}\n^m,\w^h\right)_{\Gamma^m}^h = \left\langle \mathbf{A}^{m}, \ds \w^h\right\rangle_{\Gamma^m}^h 
    - \left\langle \cur^{m+1} \ds \X^{m+1}, \ds \w^h\right\rangle_{\Gamma^m}^h,
    \quad \forall \w^h \in [\mathbb{K}^m]^3,\\
    \label{eqn:full3 Gauss curvature flow}
    &\left(\cur^{m+1}-\cur^m, \varphi^h\right)_{\Gamma^m}^h = \left\langle \ds(\X^{m+1}-\X^m), \n^m (\ds \varphi^h)^T \right\rangle_{\Gamma^m}^h \nonumber\\
    &\qquad\qquad\qquad\qquad - \left\langle \ds(\X^{m+1}-\X^m), \varphi^h \mathbf{A}^{m} \right\rangle_{\Gamma^m}^h,
    \ \ \forall \varphi^h \in \mathbb{K}^m.
    \end{align}
    \end{subequations}
Notably, the weak nonlinearity appears in only one term $\left\langle \cur^{m+1} \ds \X^{m+1}, \ds \w^h\right\rangle_{\Gamma^m}^h$, which in general comes from \eqref{eqn:full2 curvature-dependent geometric gradient flow}: 
\begin{equation*}
    \left\langle f(\cur^{m+1}) \ds \X^{m+1}, \ds \w^h\right\rangle_{\Gamma^m}^h.
\end{equation*}
This nonlinearity stems naturally from the energy density $f(\cur)$ itself. Moreover, it is essential in the energy stability proofs for analyzing energy differences between surfaces, i.e.,
\begin{equation*}
    \Big\langle f(\cur^{m+1})\ds \X^{m+1},\, \ds(\X^{m+1}- \X^m)\Big\rangle_{\Gamma^m}^h \geq \left( f(\cur^{m+1}) , 1\right)_{\Gamma^{m+1}}^h - \left( f(\cur^{m+1}) , 1\right)_{\Gamma^m}^h.
\end{equation*}

% the third equation \eqref{eqn:full3 Gauss curvature flow} becomes linear in $\cur^{m+1}$ since $f'(\cur^{m+1}) = 1$ is constant.  %Moreover, the curvature evolution equation \eqref{eqn:full3 Gauss curvature flow} is the same as the one in the Willmore flow \eqref{eqn:full3 wm}.

\begin{remark}
    If we take $f(\cur) \equiv 1$, then $f'(\cur) \equiv 0$, and the weak formulation \eqref{new weak form, curvature-dependent geometric gradient flow} reduces to the following:
    \begin{subequations}\label{new weak form, MCF}
        \begin{align}
            &\left(\frac{\X^{m+1}-\X^m}{\tau}\cdot \n^m, \phi^h\right)_{\Gamma^m}^h = \left(V^{m+1}, \phi^h\right)_{\Gamma^m}^h, 
            \quad \forall \phi^h\in \mathbb{K}^m,\label{new weak form X, MCF}\\
            &\left(V^{m+1}\n^m,\w^h\right)_{\Gamma^m}^h = 
            - \left\langle \ds \X^{m+1}, \ds \w^h\right\rangle_{\Gamma^m}^h,
            \quad \forall \w^h \in [\mathbb{K}^m]^3,\label{new weak form V, MCF}\\
            &\left(\cur^{m+1}-\cur^m, \varphi^h\right)_{\Gamma^m}^h = \left\langle \ds(\X^{m+1}-\X^m), \n^m (\ds \varphi^h)^T \right\rangle_{\Gamma^m}^h \nonumber\\
    &\qquad\qquad\qquad\qquad - \left\langle \ds(\X^{m+1}-\X^m), \varphi^h \mathbf{A}^{m} \right\rangle_{\Gamma^m}^h,
    \ \ \forall \varphi^h \in \mathbb{K}^m.
        \end{align}
    \end{subequations}
    Note that the evolution equation \eqref{new weak form H, curvature-dependent geometric gradient flow} becomes redundant since the first two equations \eqref{new weak form X, MCF}-\eqref{new weak form V, MCF} do not involve the mean curvature $\cur$. Therefore, the curvature evolution equation \eqref{new weak form H, curvature-dependent geometric gradient flow} is no longer needed, and the system reduces to the first two equations. Remarkably, the first two equations \eqref{new weak form, MCF} coincides exactly with the BGN weak formulation for the mean curvature flow \cite{barrett2008parametric1}:
    \begin{subequations}
        \begin{align*}
            &\left(\frac{\X^{m+1}-\X^m}{\tau}\cdot \n^m, \phi^h\right)_{\Gamma^m}^h = -\left(\cur^{m+1}, \phi^h\right)_{\Gamma^m}^h, 
            \quad \forall \phi^h\in \mathbb{K}^m, \\
            &\left(\cur^{m+1}\n^m,\w^h\right)_{\Gamma^m}^h = 
            \left\langle \ds \X^{m+1}, \ds \w^h\right\rangle_{\Gamma^m}^h,
            \quad \forall \w^h \in [\mathbb{K}^m]^3,
        \end{align*}
    \end{subequations}
    with $V^{m+1}$ replaced by $-\cur^{m+1}$. This reveals an interesting connection: despite that our formulation \eqref{new weak form, curvature-dependent geometric gradient flow} comprises three equations while the BGN formulation contains only two, our formulation can still be viewed as a generalization of the BGN formulation.
\end{remark}

\begin{remark}
    By slightly modifying the evolution equation for $\cur$, we can obtain another energy-stable scheme, which can directly reduced to the BGN formulation in \cite{barrett2008parametric1}. Instead of evolving $\cur$, we consider the evolution equation for $f(\cur)$:
    \begin{equation*}
        \label{new identity for H, curvature-dependent geometric gradient flow}
        \int_{\Gamma(t)} \dt \cur (f'(\cur) \varphi) \,\dd A = \int_{\Gamma(t)}\ds (\dt\X): \left(\n(\ds(f'(\cur) \varphi))^T- f'(\cur) \varphi \mathbf{A}\right)\,\dd A.
    \end{equation*}
    Then we derive the following weak formulation
    \begin{subequations}\label{new weak form, curvature-dependent geometric gradient flow, 2}
        \begin{align}
            &\left(\n \cdot \dt \X, \phi\right)_{\Gamma(t)} = \left(V, \phi\right)_{\Gamma(t)}, \quad \phi \in H^1(\Gamma(t)),    
            \label{new weak form X, curvature-dependent geometric gradient flow, 2}\\
            &\left(V\n, \w\right)_{\Gamma(t)} = \Big\langle f'(\cur) \mathbf{A} - \n(\ds f'(\cur))^T- f(\cur)\ds \X, \ds\w\Big\rangle_{\Gamma(t)},\nonumber \\
            & \qquad\qquad \qquad \qquad \qquad \w \in [H^1(\Gamma(t))]^3, \label{new weak form V, curvature-dependent geometric gradient flow, 2}\\
            &\left(\dt\cur, f'(\cur) \varphi\right)_{\Gamma(t)} = \big\langle\ds(\dt \X), \n (\ds(f'(\cur) \varphi))^T- \varphi f'(\cur) \mathbf{A} \big\rangle_{\Gamma(t)},\nonumber \\
            & \qquad \qquad\qquad \qquad \qquad  \varphi \in H^1(\Gamma(t)).\label{new weak form H, curvature-dependent geometric gradient flow, 2}
        \end{align}
    \end{subequations}
    When $f(\cur) = 1$, the evolution equation \eqref{new weak form H, curvature-dependent geometric gradient flow, 2} vanishes since it becomes the trivial identity $0 = 0$. This reveals an interesting connection: despite that our formulation \eqref{new weak form, curvature-dependent geometric gradient flow, 2} comprises three equations while the BGN formulation contains only two, our formulation can still be viewed as a generalization of the BGN formulation.
\end{remark}

\section{Mesh improvement and Newton iterations}
In the end of the previous section, we build upon the energy-stable PFEM \eqref{new weak form, curvature-dependent geometric gradient flow, 2}, which reduces to the BGN scheme when $f(\cur)\equiv 1$. This reduction suggests that our formulation may inherit the good mesh quality of the BGN scheme. But in practice, we have observed that the ES-PFEM \eqref{eqn:full wm} in Section 2 often does not exhibit favorable mesh quality.

In fact, the BGN scheme's mesh quality is improved by the mean curvature vector identity  $\cur\n=-\lap\X$, which simultaneously provides the computational formula for $\cur$. In contrast, for the general energy density $f(\cur)$, the computation of mean curvature $\cur$ and mesh quality are governed by distinct equations: the evolution equation for $\cur$ in \eqref{new weak form H, curvature-dependent geometric gradient flow} and the normal velocity equation for $V\n$ in \eqref{new weak form V, curvature-dependent geometric gradient flow}, respectively. This decoupling allows us to enhance mesh quality by adopting the tangential motion control methodology from \cite{barrett2008parametric2} to modify the $V\n$ equation, while preserving the $\cur$ computation to maintain energy stability. Using the ES-PFEM \eqref{eqn:full wm} as a representative example, we develop an ES-PFEM for the Willmore flow with improved mesh quality, and propose Newton iterative solvers for the resulting nonlinear system.

\subsection{An improved ES-PFEM with controlled tangential motion}
Even though we entirely employ purely normal motion in the derivation of the continuous variational formulation \eqref{new weak form} and the BGN scheme is implicitly included in \eqref{eqn:full curvature-dependent geometric gradient flow}, our fully discretized scheme still improperly introduces significant implicit tangential component which leads to bad mesh quality. The mesh points under the ES-PFEM \eqref{eqn:full wm} will easily cluster or disperse within a short period, resulting in extremely low robustness of the algorithm.

To control the tangential motion during evolution and therefore improve the mesh quality, the crucial idea is to represent the tangential part of velocity explicitly and then control this part by a given parameter \cite{barrett2008parametric2}.  In the continuous level, reformulate the variational formulation \eqref{new weak form} as follows
\begin{subequations}\label{new weak form reduced}
\begin{align}
    &\left(\n \cdot \dt \X, \phi\right)_{\Gamma(t)} = \left(V, \phi\right)_{\Gamma(t)}, \quad \forall \phi\in L^2(\Gamma(t)),\label{new weak form X reduced}\\
    &\left(\Tau_i \cdot \dt \X, \psi_i\right)_{\Gamma(t)} = \left(\beta_i, \psi_i\right)_{\Gamma(t)}, \quad i = 1,2, \quad \forall \psi_1, \psi_2\in L^2(\Gamma(t)),\label{new weak form b reduced}\\
    &\left(V\n + \alpha \sum_{i = 1,2} \beta_i\Tau_i, \w\right)_{\Gamma(t)} = \Big \langle\cur \mathbf{A}- \n\left(\ds\cur\right)^T- \frac{1}{2}\cur^2\ds \X, \ds\w\Big\rangle_{\Gamma(t)},\nonumber\\
    &\qquad \qquad \qquad \qquad \qquad  \forall \w\in [H^1(\Gamma(t))]^3,\label{new weak form V reduced}\\
    &\left(\dt\cur, \varphi\right)_{\Gamma(t)} = \Big\langle\ds(\dt \X), \n (\ds \varphi)^T- \varphi \mathbf{A} \Big\rangle_{\Gamma(t)}, \quad \forall \varphi \in H^1(\Gamma(t)),\label{new weak form H reduced}
\end{align}
\end{subequations}
where $\Tau_1, \Tau_2$ are two mutually non-parallel tangent vector fields and $\text{span}\{\n, \Tau_1, \Tau_2\} = \bR^3$, {
$\alpha>0$ is a hyperparameter for mesh-quality control, which will be specified
in the fully discrete scheme below. The quantities $\beta_i$, $i=1,2$, represent
the tangential velocity components along $\Tau_i$. In the fully discrete scheme,
these components will be penalized by $\alpha$, hence a larger $\alpha$ enforces a
stronger suppression of tangential motion.}
{
\begin{remark}
  The shape evolution of a surface is governed by its normal velocity. Therefore, the weak forms \eqref{new weak form} and \eqref{new weak form reduced}, which have the same normal motion, describe the same geometric shape evolution of the surface.
\end{remark}}
% Thus, it should be mentioned that \eqref{new weak form reduced} and \eqref{new weak form} are equivalent to each other but exhibit significant differences after full discretization.

%Since our approach prioritizes purely normal motion, in the fully discrete numerical scheme, we utilize
%\begin{equation}
%    \left(\Tau_i \cdot \dt \X, \psi_i\right)_{\Gamma(t)} = 0, \quad i = 1,2, \quad \forall \psi_1, \psi_2\in L^2(\Gamma(t)),
%\end{equation}
% instead of \eqref{new weak form b reduced} out of $\beta_i =0$.
 
 Thus, the fully discretized scheme based on the improved formulation \eqref{new weak form reduced} is given as follows: Given the initial data of polygonal surface $\Gamma^0$: $\X^0 (\cdot) \in [\mathbb{K}^m]^3$  and $\cur^0(\cdot)\in \mathbb{K}^m$ , for any $m\geq 0$, find the solution $\left(\X^{m+1},V^{m+1},\beta^{m+1}_1, \beta^{m+1}_2, \cur^{m+1}\right) \in [\mathbb{K}^m]^3 \times \mathbb{K}^m\times\mathbb{K}^m\times\mathbb{K}^m\times \mathbb{K}^m$, such that{
\begin{subequations}
\label{eqn:full wm reduced}
\begin{align}
\label{eqn:full1 wm reduced}
&\left(\frac{\X^{m+1}-\X^m}{\tau}\cdot \n^m, \phi^h\right)_{\Gamma^m}^h= \left(V^{m+1},\,  \phi^h\right)_{\Gamma^m}^h,\qquad \qquad \quad  \forall \phi^h\in \mathbb{K}^m,\\[0.5em]
\label{eqn:full2 wm reduced}
&\left(\frac{\X^{m+1}-\X^m}{\tau}\cdot \Tau^m_i, \psi^h_i\right)_{\Gamma^m}^h= \left(\beta_i^{m+1},\,  \psi^h_i\right)_{\Gamma^m}^h,\qquad  i = 1,2, \quad\forall \psi^h_1, \psi^h_2\in \mathbb{K}^m,\\[0.5em]
\label{eqn:full3 wm reduced}
&\left(V^{m+1}\n^m+ \alpha^m\sum_{i = 1,2} \beta^{m+1}_i \Tau_i^m ,\w^h\right)_{\Gamma^m}^h=\Big\langle\cur^{m+1} \mathbf{A}^m-\n^m(\ds \cur^{m+1})^T,\, \ds \w^h\Big\rangle_{\Gamma^m}^h\nonumber\\
& \qquad \qquad  \qquad\qquad-\Big\langle\frac{1}{2}(\cur^{m+1})^2 \ds \X^{m+1},\, \ds \w^h\Big\rangle_{\Gamma^m}^h,\quad \forall \w^h  \in [\mathbb{K}^m]^3, \\[0.5em]
\label{eqn:full4 wm reduced}
&\left(\cur^{m+1}-\cur^m, \,\varphi^h\right)_{\Gamma^m}^h = \Big\langle \ds(\X^{m+1}-\X^m), \n^m (\ds\varphi^h)^T -\varphi^h \mathbf{A}^m \Big\rangle_{\Gamma^m}^h,\quad  \forall \varphi^h \in \mathbb{K}^m;
\end{align}
\end{subequations}
where $\alpha^m$ is a hyperparameter. Here the two non-parallel unit tangential vectors $\Tau_1^m$ and $\Tau_2^m$ of the polynomial surface are defined as follows: on each surface element $\sigma_j^m = \{\q_{j_1}^m, \q_{j_2}^m, \q_{j_3}^m\}$

\begin{equation}
\boldsymbol{\tau}^m_1\big|_{\sigma_j^m}\coloneqq  \frac{\q_{j_2}^m- \q_{j_1}^m}{|\q_{j_2}^m- \q_{j_1}^m|}, \quad \boldsymbol{\tau}^m_2\big|_{\sigma_j^m}\coloneqq  \boldsymbol{\tau}^m_1\big|_{\sigma_j^m} \times \n^m\big|_{\sigma_j^m}. 
\end{equation}

}

%And since $\sigma_j^m$ is nondegenerate, $\Tau_1^m|_{\sigma_j^m}$ and $\Tau_2^m|_{\sigma_j^m}$ are not parallel.

%{
%We now clarify the role of the hyperparameter $\alpha^m$.  As indicated by equation (\ref{eqn:full2 wm reduced}), $\beta^{m+1}$ can still be roughly interpreted as an indicator of tangential motion induced by full discretization. Observing (\ref{eqn:full3 wm reduced}), the right-hand side term after discretization introduces a component along the tangential directions $\boldsymbol{\tau}_i^m$. In particular, increasing $\alpha^m$ ($\alpha^m>1$) leads to a reduction in $\beta^{m+1}$, which can effectively suppress the improper tangential motion.
%}

Notably, the ES-PFEM \eqref{eqn:full wm reduced} preserves energy dissipation.
\begin{theorem}[Unconditional energy dissipation of \eqref{eqn:full wm reduced}]\label{thm: energy2}
    Let $\left(\X^{m+1}, V^{m+1} ,\cur^{m+1}\right)$ be a solution of our ES-PFEM \eqref{eqn:full wm reduced} with initial data $\left(\X^0,\cur^0\right)$. Then the discretized Willmore energy \eqref{discretized Willmore energy} decreases with respect to $m$ unconditionally when the hyperparameter $\alpha^m\geq 0$.
\end{theorem}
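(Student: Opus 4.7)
The plan is to replicate the argument for the earlier energy-dissipation result of \eqref{eqn:full wm}, with one extra bookkeeping step to eliminate the new tangential-control term. I would choose the test functions
$\phi^h = -\tau V^{m+1}$ in \eqref{eqn:full1 wm reduced}, $\psi_i^h = \alpha^m \beta_i^{m+1}\in\mathbb{K}^m$ in \eqref{eqn:full2 wm reduced} for $i=1,2$, $\w^h = \X^{m+1} - \X^m$ in \eqref{eqn:full3 wm reduced}, and $\varphi^h = \cur^{m+1}$ in \eqref{eqn:full4 wm reduced}. The resulting identities coming from \eqref{eqn:full1 wm reduced}, \eqref{eqn:full3 wm reduced}, \eqref{eqn:full4 wm reduced} reproduce \eqref{test func 1}, \eqref{test func 2}, \eqref{test func 3} of the earlier proof, except for the additional contribution
\begin{equation*}
\mathcal{T} \coloneqq \Big(\alpha^m \sum_{i=1,2} \beta_i^{m+1}\,\Tau_i^m,\ \X^{m+1}-\X^m\Big)_{\Gamma^m}^h
\end{equation*}
coming from the modified left-hand side of \eqref{eqn:full3 wm reduced}.

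The key new step is to show $\mathcal{T}=0$. Unfolding the mass-lumped inner product elementwise and using that $\Tau_i^m$ is piecewise constant on each $\sigma_j^m$, I rewrite
\begin{equation*}
\mathcal{T} = \alpha^m \sum_{i=1,2} \big((\X^{m+1}-\X^m)\cdot \Tau_i^m,\ \beta_i^{m+1}\big)_{\Gamma^m}^h,
\end{equation*}
and each summand vanishes by \eqref{eqn:full2 wm reduced} tested against $\psi_i^h=\alpha^m\beta_i^{m+1}\in\mathbb{K}^m$. This cancellation is valid for every $\alpha^m\ge 0$, including the degenerate case $\alpha^m=0$ in which \eqref{eqn:full wm reduced} reduces to \eqref{eqn:full wm}.

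After this cancellation, summing the four tested identities returns exactly the relation \eqref{aeq 0} of the earlier proof. From that point I would reuse verbatim the chain \eqref{aeq 1}--\eqref{aeq 5}: the elementary inequality $a(a-b)\ge \tfrac12 a^2-\tfrac12 b^2$ applied to the $\cur^{m+1}$-evolution term, the Hilbert--Schmidt analogue $\boldsymbol{M}:(\boldsymbol{M}-\boldsymbol{N}) \ge \tfrac12 \boldsymbol{M}:\boldsymbol{M}-\tfrac12\boldsymbol{N}:\boldsymbol{N}$ applied to the $\tfrac12(\cur^{m+1})^2\ds\X^{m+1}$ term, and the surface-area inequality $\tfrac12|\sigma_j^m|\,\ds\X^{m+1}|_{\sigma_j^m}:\ds\X^{m+1}|_{\sigma_j^m} \ge |\sigma_j^{m+1}|$ from Lemma~2.1 of \cite{barrett2008parametric1}. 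Telescoping via the identity \eqref{aeq 4} for $W^{m+1}$ then yields $W^{m+1}-W^m \le -\tau(V^{m+1},V^{m+1})_{\Gamma^m}^h \le 0$, which iterates to \eqref{fulld stability}.

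The only conceptual obstacle is the decoupling step that produces $\mathcal{T}=0$: the tangent fields $\Tau_i^m$ are piecewise constant while $\beta_i^{m+1}$ and $\X^{m+1}-\X^m$ are continuous piecewise linear, so one must verify that the one-point-per-vertex lumping on each element $\sigma_j^m$ still factorises the triple product $\beta_i^{m+1}\,\Tau_i^m\cdot(\X^{m+1}-\X^m)$ in a way that permits \eqref{eqn:full2 wm reduced} to be invoked with nodal test values. This is immediate from the elementwise definition of $(\cdot,\cdot)_{\Gamma^m}^h$ but deserves to be stated explicitly. Once this observation is in hand, the theorem is a direct corollary of the proof machinery already developed for \eqref{eqn:full wm}.
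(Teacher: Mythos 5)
Your proposal is correct and follows essentially the same route as the paper: the same test functions are chosen (the paper takes $\psi_i^h=-\alpha^m\beta_i^{m+1}$ rather than $+\alpha^m\beta_i^{m+1}$, which is immaterial since the right-hand side of \eqref{eqn:full2 wm reduced} is zero, so the tangential term vanishes either way), the sum reduces to \eqref{aeq 0}, and the remainder is reused verbatim from \eqref{aeq 1}--\eqref{aeq 5}. Your explicit elementwise justification that the mass lumping factorises the triple product with the piecewise-constant $\Tau_i^m$ is a detail the paper leaves implicit, but it is the same argument.
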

\begin{proof}
    Taking $\phi^h = -\tau V^{m+1}\in \mathbb{K}^m$ in \eqref{eqn:full1 wm reduced}, $\psi^h_i = -\alpha^m\beta^{m+1}_i\in \mathbb{K}^m$ in \eqref{eqn:full2 wm reduced} for $i = 1,2$, $\w^h = \X^{m+1}-\X^m\in [\mathbb{K}^m]^3$ in \eqref{eqn:full3 wm reduced} and $\varphi^h = \cur^{m+1} \in \mathbb{K}^m$ in \eqref{eqn:full4 wm reduced}, and summing them together, we derive that{
    \begin{align}\label{pf6 eq1}
        &\left(\cur^{m+1}-\cur^m, \,\cur^{m+1}\right)_{\Gamma^m}^h + \Big\langle\frac{1}{2}(\cur^{m+1})^2\ds \X^{m+1},\, \ds\X^{m+1}-\ds \X^m\Big\rangle_{\Gamma^m}^h \nonumber\\
        &=-\tau\left(V^{m+1},\, V^{m+1}\right)_{\Gamma^m}^h -\tau\sum_{i = 1,2}\left(\beta^{m+1}_i,\beta_i^{m+1}\right)_{\Gamma^m}^h\leq 0,
    \end{align}}
    which is similar to the equation \eqref{aeq 0}. Therefore, the subsequent proof follows identically to \eqref{aeq 5}, which implies the energy dissipation we desired.\qed
\end{proof}

\subsection{Newton iterative solver}
To solve the fully implicit numerical scheme \eqref{eqn:full wm reduced}, we employ the Newton-Rhapson iterations.

Given $\left(\X^m, \cur^m\right)\in [\mathbb{K}^m]^3 \times \mathbb{K}^m$, at time step $t_{m+1}$, set $\X^{m+1,0}=\X^m$ and $\cur^{m+1,0}=\cur^m$ as the initial guess of the nonlinear problem \eqref{eqn:full wm reduced}, where $V^{m+1,0}$ and $\beta_i^{m+1,0}$ are chosen arbitrarily because we do not need their values at $t = t_m$ in the scheme \eqref{eqn:full wm reduced}. And in the $l$-th iteration, given $\left(\boldsymbol{X}^{m+1, l}, V^{m+1,l}, \beta^{m+1,l}_1, \beta^{m+1,l}_2, \cur^{m+1,l}\right) \in [\mathbb{K}^m]^3 \times \mathbb{K}^m\times \mathbb{K}^m\times \mathbb{K}^m\times \mathbb{K}^m $ , the Newton direction $\left(\boldsymbol{X}^{\delta}, V^{\delta}, \beta^{\delta}_1, \beta^{\delta}_2, \cur^{\delta}\right)$  can be obtained by solving
\begin{subequations}
    \label{iteration}
    \begin{align}
        &\left(\frac{1}{\tau}\X^{\delta}\cdot \n^m, \phi^h\right)_{\Gamma^m}^h -\left(V^{\delta}, \phi^h\right)_{\Gamma^m}^h= -\left(\frac{\X^{m+1,l}-\X^m}{\tau}\cdot \n^m, \phi^h\right)_{\Gamma^m}^h
        +\left(V^{m+1,l},\phi^h\right)_{\Gamma^m}^h,\\[0.5em]
         &{\left(\frac{1}{\tau}\X^{\delta}\cdot \Tau_i^m, \psi^h_i\right)_{\Gamma^m}^h -\left(\beta_i^{\delta}, \phi^h\right)_{\Gamma^m}^h= -\left(\frac{\X^{m+1,l}-\X^m}{\tau}\cdot \Tau_i^m, \phi^h\right)_{\Gamma^m}^h
        +\left(\beta_i^{m+1,l},\phi^h\right)_{\Gamma^m}^h,\nonumber}\\
       & \qquad \qquad \qquad \qquad \qquad \qquad \qquad  \qquad \qquad \qquad  \qquad \qquad \qquad \quad i = 1,2,\\[0.5em]
         &\Big\langle\frac{1}{2}(\cur^{m+1,l})^2\ds\X^{\delta}, \ds\w^h\Big \rangle_{\Gamma^m}^h + \left(V^{\delta}\n^m + \alpha^m \sum_{i = 1,2}\beta_i^{\delta}\Tau_i^m, \w^h\right)_{\Gamma^m}^h\nonumber\\ 
         & \qquad -\Big\langle\cur^{\delta}\mathbf{A}^m- \n^m(\ds\cur^{\delta})^T, \ds\w^h\Big\rangle_{\Gamma^m}^h+\Big\langle\cur^{m+1,l}\cur^{\delta}\ds\X^{m+1,l}, \ds\w^h\Big \rangle_{\Gamma^m}^h\nonumber\\
        & \quad = -\left(V^{m+1,l}\n^m + \alpha^m \sum_{i= 1,2}\beta_i^{m+1,l}\Tau_i^m ,\w^h\right)_{\Gamma^m}^h-\Big\langle\frac{1}{2}(\cur^{m+1,l})^2 \ds\X^{m+1,l}, \ds\w^h\Big\rangle_{\Gamma^m}^h\nonumber\\
         & \qquad +\Big\langle\cur^{m+1,l}\mathbf{A}^m-\n^m (\ds\cur^{m+1,l})^T, \ds \w^h\Big\rangle_{\Gamma^m}^h, \\[0.5em]
         &- \Big\langle\ds\X^{\delta}, \n^m(\ds\varphi^h)^T- \varphi^h\mathbf{A}^m\Big \rangle_{\Gamma^m}^h+\left(\cur^{\delta},\varphi^h\right)_{\Gamma^m}^h = -\left(\cur^{m+1,l}-\cur^{m},\varphi^h\right)_{\Gamma^m}^h\nonumber\\
         &  \qquad \qquad \qquad \qquad\qquad  +\Big\langle\ds(\X^{m+1,l}-\X^m), \n^m(\ds\varphi^h)^T- \varphi^h\mathbf{A}^m\Big \rangle_{\Gamma^m}^h;
    \end{align}
\end{subequations}
for all $\phi^h \in \mathbb{K}^m$, $\psi^h_1\in \mathbb{K}^m$, $\psi_2^h \in \mathbb{K}^m$, $\boldsymbol{\omega}^h \in [\mathbb{K}^m]^3$ and $\varphi^h \in \mathbb{K}^m$. And for each $m\geq 0$, update the solution by 
$$\X^{m+1,l+1} = \X^{m+1, l} +\X^{\delta},\quad V^{m+1, l+1} = V^{m+1,l}+ V^{\delta},$$
$$\beta_i^{m+1,l+1} = \beta^{m+1,l}_i + \beta_i^{\delta},\quad \cur^{m+1,l+1} = \cur^{m+1,l}+ \cur^{\delta}.$$
 The iteration terminates when
$$
\max\left\{ \Vert \boldsymbol{X}^{\delta}\Vert_{L^{\infty}}, \Vert V^{\delta}\Vert_{L^{\infty}}, \Vert \beta_i^{\delta}\Vert_{L^{\infty}}, \Vert \cur^{\delta}\Vert_{L^{\infty}}\right\}\leq \text{tol},
$$
where $\Vert\cdot \Vert_{L^{\infty}}$ is defined as 
\begin{equation}
    \Vert f^h\Vert_{L^{\infty}} = \max_{1\leq k\leq K}|f^h(\q_k^m)|, \quad f^h \in \mathbb{K}^m, % / [\mathbb{K}^m]^3,
\end{equation}
and tol is the chosen tolerance to ensure that the iteration achieves a certain level of accuracy when the conditions are met.  The numerical solution of (\ref{eqn:full wm reduced}) is given by
$$
\boldsymbol{X}^{m+1} =\boldsymbol{X}^{m+1,l+1}, \quad V^{m+1} = V^{m+1,l+1},\quad \beta_i^{m+1} = \beta_i^{m+1,l+1} ,\quad \cur^{m+1} = \cur^{m+1,l+1}. 
$$

After the Newton iterations, the hyperparameter $\alpha^m$ is adjusted to $\alpha^{m+1}$ by the following algorithm according to $\beta_i^{m+1}$. When the $\beta_1^m$ and $\beta_2^m$ are too large/small, we increase/decrease the value of the hyperparameter by a factor $C$.
\begin{algorithm}\label{alg:1}
\caption{Adaptive $\alpha^m$}
\begin{algorithmic}[1]
\Require $\alpha^{m}$: hyperparameter at $t_m$; \  $\beta_i^{m+1}$: results from the Newton iterations;\  $C$: a constant $C>1$

\If{$\beta_i^{m+1} \geq 10^{-3}$}
    \State $\alpha^{m+1}= C\alpha^m$;
\ElsIf{$\beta_i^{m+1} \leq 10^{-6}$}
    \State $\alpha^{m+1}= \alpha^m/C$;
\EndIf

\end{algorithmic}
\end{algorithm}

To initialize the iterative solver, we also need the initial data $\cur^0$ and the initial hyperparameter $\alpha^0$. The curvature $\cur^0$ is approximated via the definition of $\cur$ \cite{barrett2008parametric2},
\begin{equation}
    \left(\cur^0, \chi^h\right)_{\Gamma^0}^h = \left(\text{Tr}(\mathbf{A}^0),\chi^h\right)_{\Gamma^0}^h, \quad \forall \chi^h \in \mathbb{K}^0.
\end{equation}
And $\alpha^0$ is subjectively determined in our subsequent numerical tests.

\begin{remark}
    We must emphasize that the method employed in this section inherently aims to suppress tangential motion. While theoretically, this suppression could be achieved through discretizing purely normal motion, but our actual code implementation revealed that the numerical scheme derived from purely normal motion  still leads to rapid mesh collapse.
\end{remark}

{
\begin{remark}
    The above mesh improvement approach and Newton iteration can be similarly applied to curvature-dependent geometric gradient flows. We omit the details for brevity. Throughout the following numerical tests, we use the improved scheme \eqref{eqn:full wm reduced} unless the original scheme  \eqref{eqn:full wm} is specially mentioned. Some examples will be presented below to illustrate the necessity of the improved scheme.
\end{remark}}

\section{Numerical results}

In this section, numerical experiments are carried out to validate the performance of our proposed ES-PFEM \eqref{eqn:full wm reduced}, which includes the investigation of convergence order, the validation of energy dissipation and morphological evolutions of closed surfaces.

Here are several basic initial surfaces we consider in our tests:
\begin{itemize}
\item Unit sphere: $x^2 +y^2 + z^2 = 1$.
\item Ellipsoid: $\frac{x^2}{a} + \frac{y^2}{b} + z^2 = 1$.
\item Torus: $(R- \sqrt{x^2+y^2})^2 + z^2 = r^2$.
\end{itemize}
And among all of our numerical tests, tol in the Newton iterations is chosen as $10^{-10}$. For adaptive $\alpha^m$,  the initial hyperparameter  $\alpha^0 = 10^{3}$ and constant $C = 5$ in Algorithm 1.  

{ In our numerical experiments, the improved method \eqref{eqn:full wm reduced} plays a crucial role in preserving mesh quality. As shown in Fig.~\ref{fig: compared to original scheme}, the mesh deteriorates rapidly when the original ES-PFEM \eqref{eqn:full wm} is employed. For this reason, all numerical experiments presented below are conducted using the improved ES-PFEM \eqref{eqn:full wm reduced}.}
\begin{figure}[htp!]
    \centering
\includegraphics[width=1\textwidth]{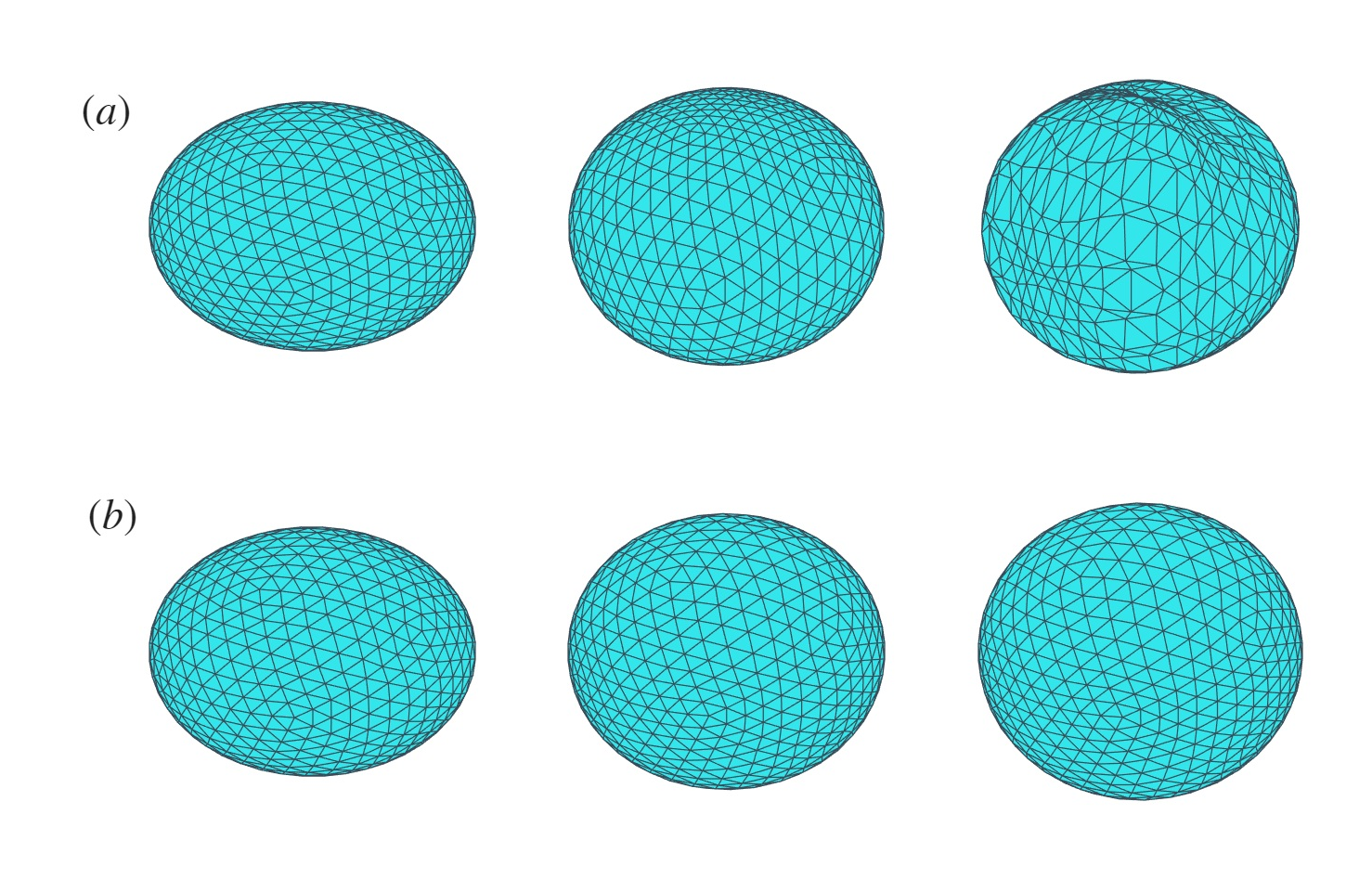}    
    \caption{Evolution of an ellipsoid with $a = b
     = \sqrt{2}$ at $ t = 0, 0.08, 0.14$. (a) Using the original ES-PFEM \eqref{eqn:full wm},
(b) Using the improved ES-PFEM \eqref{eqn:full wm reduced}.} 
    \label{fig: compared to original scheme}
\end{figure}
\subsection{Numerical tests for the Willmore flow}

The sphere and Clifford torus (torus with $R:r = \sqrt{2}:1$)  are critical points of the Willmore energy and thus are the steady-state solutions of the Willmore flow. Specifically, spheres globally minimize the Willmore energy among all closed surfaces with a value of $8\pi$\cite{willmore1993riemannian}. Among all closed surfaces with positive genus, the minimal Willmore energy is proved to be $4\pi^2$ achieved by the Clifford torus \cite{marques2014min,marques2014willmore}. 

To perform a convergence test for ES-PFEM \eqref{eqn:full wm reduced}, we define the mesh size $h$ of the initial polygonal surface $\Gamma^0 = \bigcup_{j =1}^J\overline{\sigma_j^0}$ as $h \coloneqq \max_{j = 1}^J \sqrt{|\sigma_j^0|}$. And we choose the time step size $\tau$ and the mesh size $h$ to satisfy $\tau = 0.01h^2$. This choice is based on the first-order temporal discretization and second-order spatial discretization in our scheme, thus $\tau = \mathcal{O}(h^2)$ can be chosen as the Cauchy path.
\begin{figure}[htp!]
\centering
\includegraphics[width=0.48\textwidth]{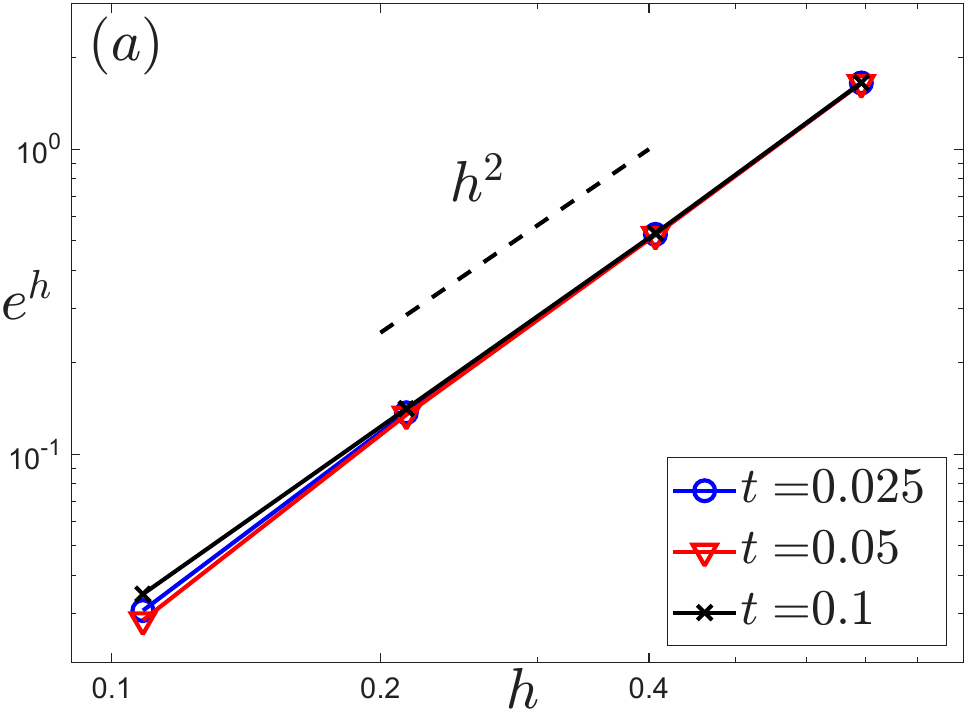}\quad \includegraphics[width=0.48\textwidth]{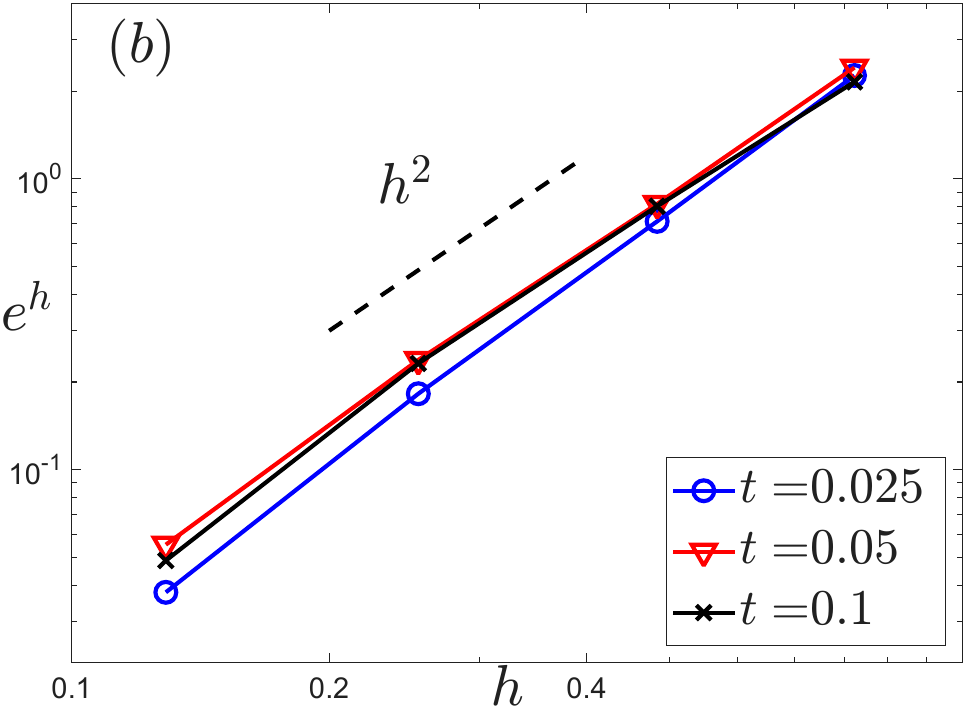}
\caption{Numerical errors $e^h$ of different initial surfaces at different time $t$: (a) unit sphere, and (b) ellipsoid with $a = 2$ and $b = 1$. Recall the time step size $\tau = 0.01h^2$.}
\label{fig: convergence}
\end{figure}
{To measure the difference between two closed surfaces, we adopt the manifold distance $\mathcal{M}(\cdot, \cdot)$ defined in \cite{zhao2021energy, bao2021structure} for closed surfaces.  Given two orientable closed surfaces $\Gamma$ and $\Gamma'$, their manifold distance is given by
\begin{equation}
    \mathcal{M}(\Gamma, \Gamma')\coloneqq |\Omega|+|\Omega'|-2|\Omega\cap\Omega'| ,
\end{equation}
where $\Omega$ and $\Omega'$ represent the regions enclosed by $\Gamma$ and $\Gamma'$, respectively.} The numerical error $e^h$ is defined as 
\begin{equation}
    e^h(t)|_{t = t_m} \coloneqq \mathcal{M}(\Gamma^m, \Gamma(t_m)).
\end{equation}

However, since analytical solutions $\Gamma(t)$ at $t = t_m$ are not available for general surfaces, we adopt the numerical results obtained from a more refined mesh size $h_e$ as our reference exact solution in convergence test, e.g. $h_{e} =0.065 $ for ellipsoid with $a = 2,b = 1$. 

\begin{figure}[t]
\centering
\includegraphics[width=0.48\textwidth]{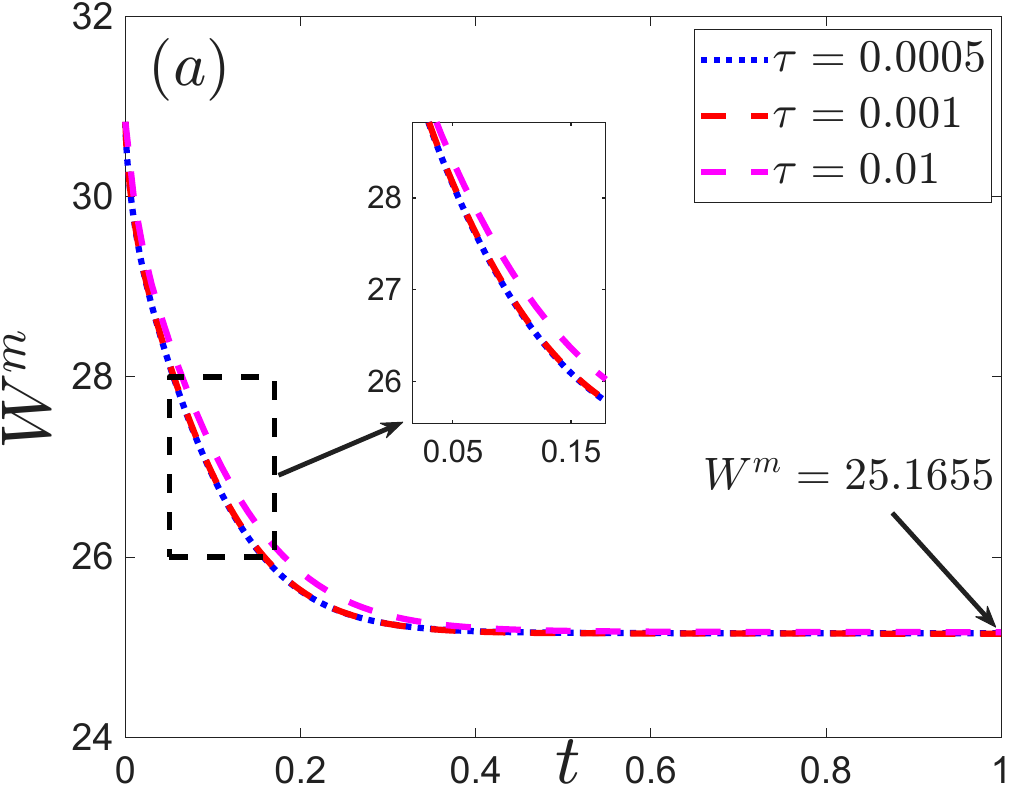}
\includegraphics[width=0.48\textwidth]{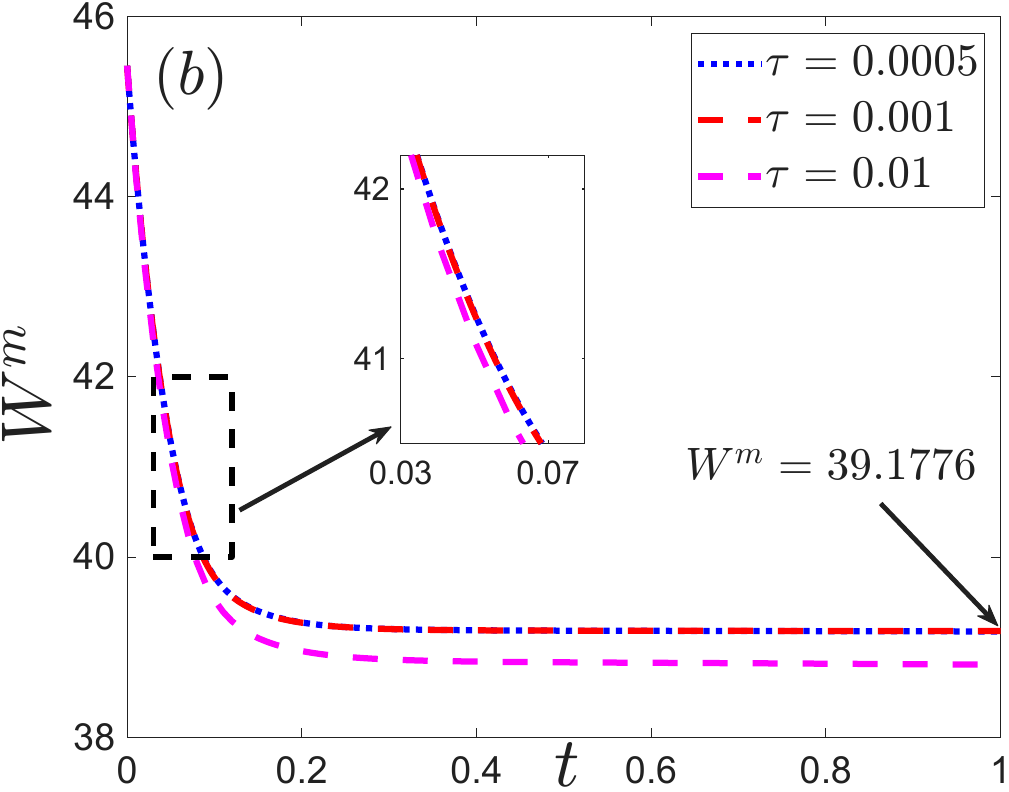}
\caption{Discretized Willmore energy $W^m$ of different initial surfaces: (a) ellipsoid with $a = 4$ and $b = 1$, $h = 0.077 $; (b)  torus with $R = \sqrt{2}$ and $r = \sqrt{2}/2$, $h = 0.065$.}
\label{fig: energy}
\end{figure}
\begin{figure}[h]
\centering
\includegraphics[width=0.9\textwidth]{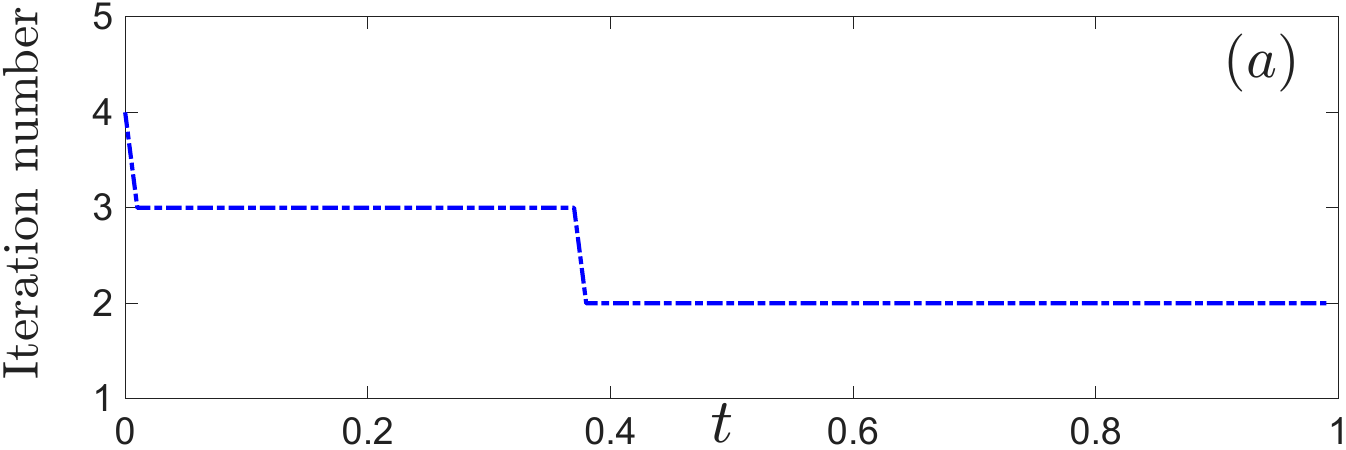}
\includegraphics[width=0.9\textwidth]{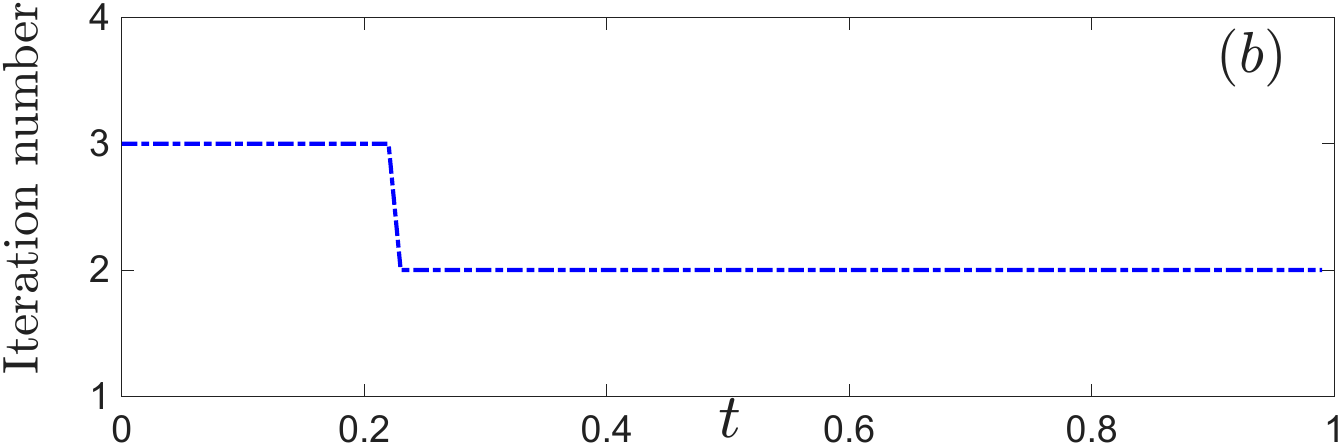}
\caption{Iteration number of \eqref{iteration} for different initial surfaces: ellipsoid with $a = 4$ and $b = 1$, $h = 0.077$ and $\tau =0.01 $; (b)  torus with $R = \sqrt{2}$ and $r = \sqrt{2}/2$, $h =0.065$ and $\tau = 0.01$.}
\label{fig: iteration}
\end{figure}

Fig. \ref{fig: convergence} illustrates the errors $e^h$ between the numerical results and the exact solutions at different times $t$. Fig. \ref{fig: energy} depicts the unconditional energy dissipation of the ES-PFEM \eqref{eqn:full wm reduced} for different time step sizes $\tau$ with fixed mesh size $h$. And Fig. \ref{fig: iteration} illustrates the number of iterations required in each time step. 

\begin{figure}[htp!]
\centering
\includegraphics[width=0.33\textwidth]{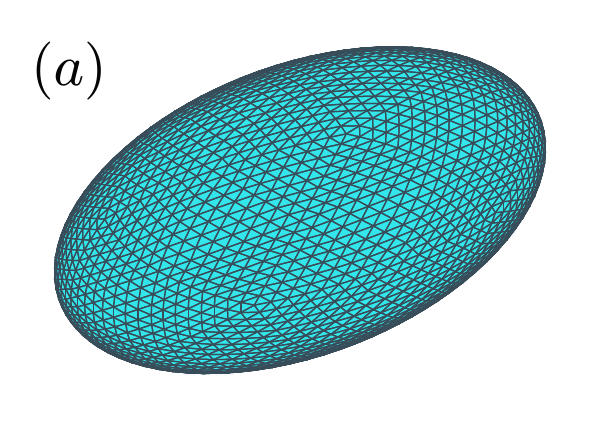}\includegraphics[width=0.33\textwidth]{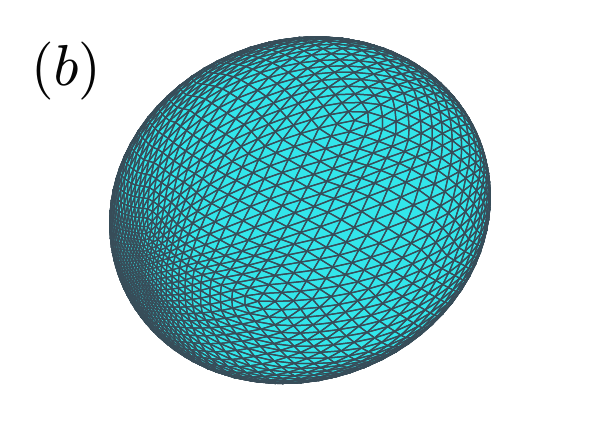}\includegraphics[width=0.33\textwidth]{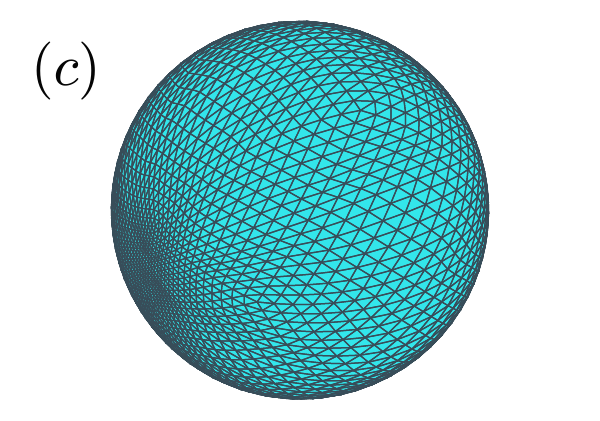}
\caption{Morphological evolution of an ellipsoid with $a = 4$ and $b = 1$ under the Willmore flow 
at (a) $t = 0$, (b) $t = 0.25$, and (c) $t = 1$. The mesh size and time step size are chosen as $h = 0.077$ and $\tau = 0.01$.}
\label{fig: evolution1}
\end{figure}

From Figs. \ref{fig: convergence}, \ref{fig: energy} and \ref{fig: iteration}, we can conclude that:
\begin{enumerate}
    \item The convergence order of the ES-PFEM \eqref{eqn:full wm reduced} achieves around $2$ robustly among our tested examples (cf. Fig. \ref{fig: convergence}). 
    \item The Willmore energy is unconditionally dissipative as proved in Theorem \ref{thm: energy2}  (cf. Fig. \ref{fig: energy}).
    \item The steady state of ellipsoid achieves an approximate minimal energy value of $8\pi\approx 25.1327$ (the result shown in Fig. \ref{fig: energy} (a) is $W^m = 25.1655$). And the steady state of torus (genus 1) yields an approximate minimal energy value of $4\pi^2\approx 39.4784$ (the result shown in Fig. \ref{fig: energy} (b) is $W^m = 39.1766$).
    \item Even though our ES-PFEM is implicit and Newton iterations are employed, it can be observed that most of the time steps are completed within two or three iterations, demonstrating the high efficiency of the Newton-Raphson iterative method for solving the problem {even with a relatively large time step $\tau =0.01$} (cf. Fig. \ref{fig: iteration}).
    
\end{enumerate}

And finally, we can investigate the morphological evolution of closed surfaces. For closed surfaces with genus $0$, we find that the surfaces tend to evolve towards spherical configurations. For the evolution of torus (genus 1), we found that it converge to the Clifford torus. The morphological evolutions of ellipsoid and torus are given in Fig. \ref{fig: evolution1} and Fig. \ref{fig: evolution2}. {
To further assess the effectiveness and robustness of the proposed method, we perform numerical experiments on a rounded cuboid and a perturbed torus, as shown in Figs. \ref{fig: evolution3} and \ref{fig: evolution4}, respectively. These examples show that the method can successfully handle evolutions with substantial geometric deformation while maintaining reliable numerical behavior, a larger initial penalty parameter is adopted. In our implementations for Figs. \ref{fig: evolution3} and \ref{fig: evolution4}, we set $\alpha^0 = 10^6$.
}

\begin{figure}[t]
\centering
\includegraphics[width=0.33\textwidth]{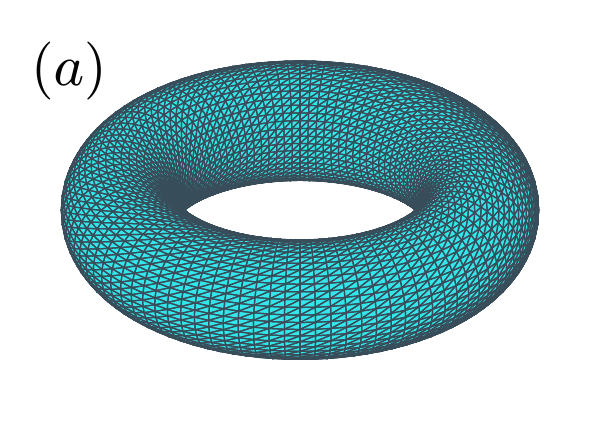}\includegraphics[width=0.33\textwidth]{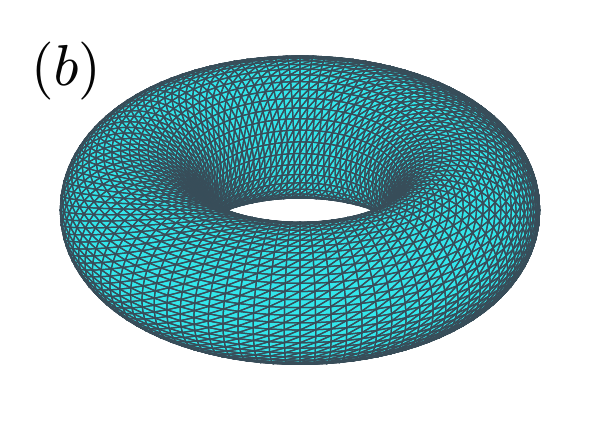}\includegraphics[width=0.33\textwidth]{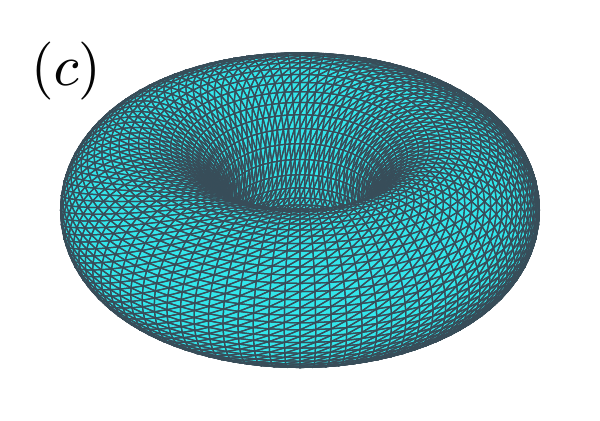}
\caption{Morphological evolution of a tours with $R = 3$ and $r = 1$ under the Willmore flow 
at (a) $t = 0$, (b) $t = 0.5$, and (c) $t = 1$. The mesh size and time step size are chosen as $h = 0.131$ and $\tau = 0.01$.}
\label{fig: evolution2}
\end{figure}

\begin{figure}[t]
\centering
\includegraphics[width=0.33\textwidth]{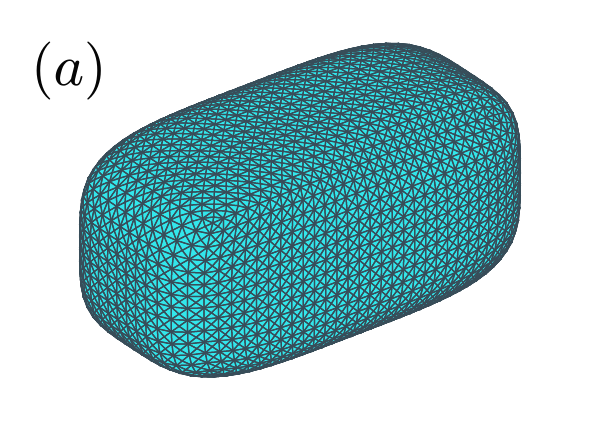}\includegraphics[width=0.33\textwidth]{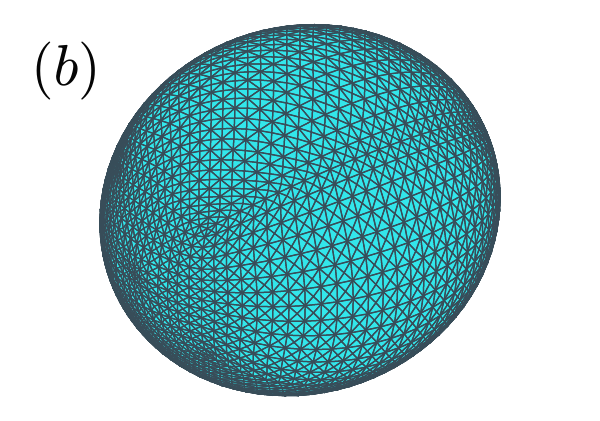}\includegraphics[width=0.33\textwidth]{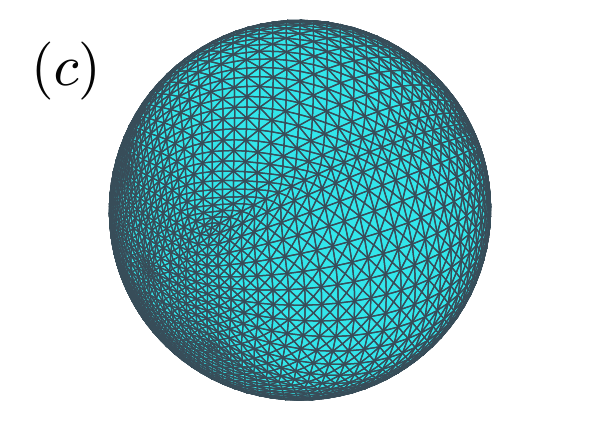}
\vspace{2em}

\includegraphics[width=0.7\textwidth]{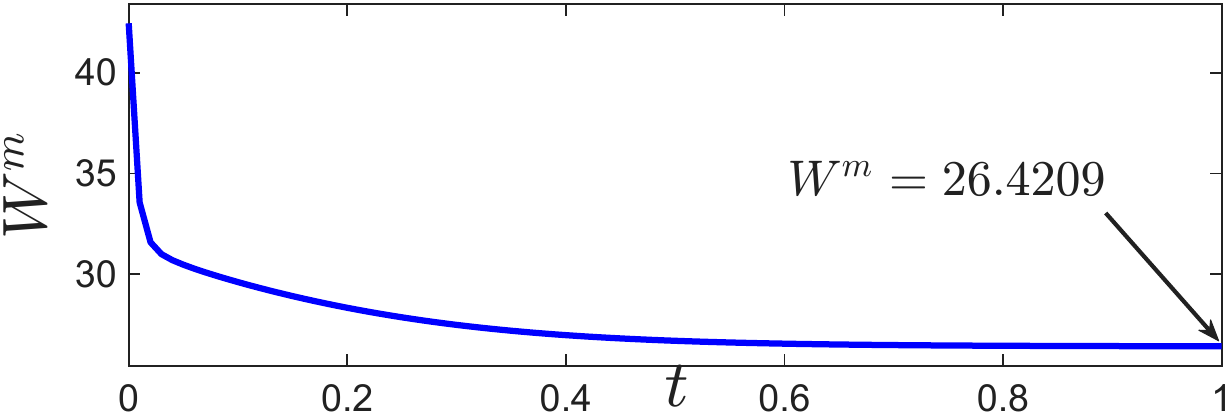}

\caption{Morphological evolution  of a rounded cuboid under the Willmore flow at  (a) $t = 0$, (b) $t = 0.5$, and (c) $t = 1$.  The bottom figure shows the corresponding discrete Willmore energy. The mesh size and time step size are chosen as $h = 0.068$ and $\tau = 0.01$ (Figures have been scaled).}
\label{fig: evolution3}
\end{figure}

\begin{figure}[t]
\centering
\includegraphics[width=0.33\textwidth]{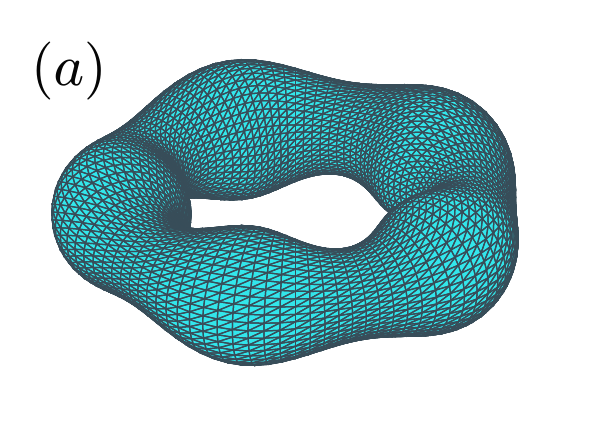}\includegraphics[width=0.33\textwidth]{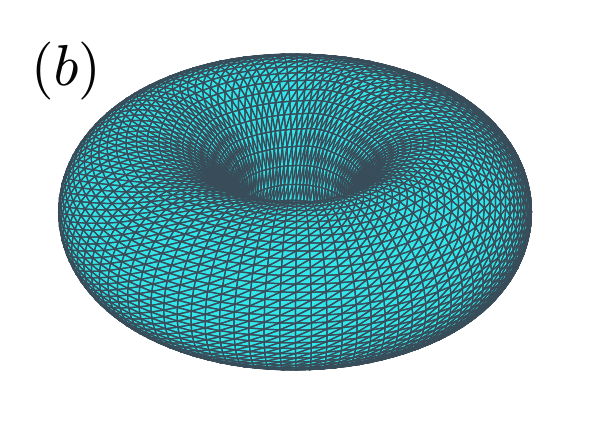}\includegraphics[width=0.33\textwidth]{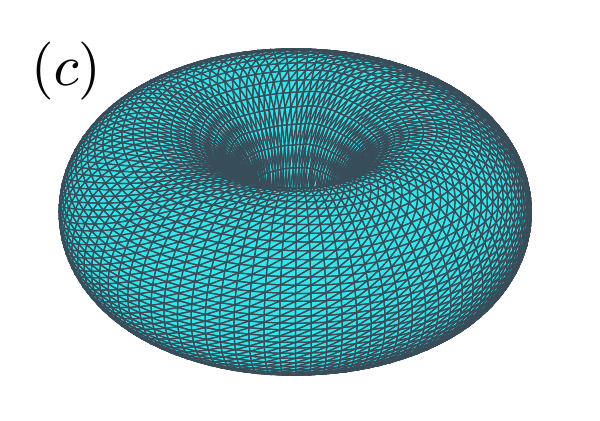}
\vspace{2em}
\includegraphics[width=0.7\textwidth]{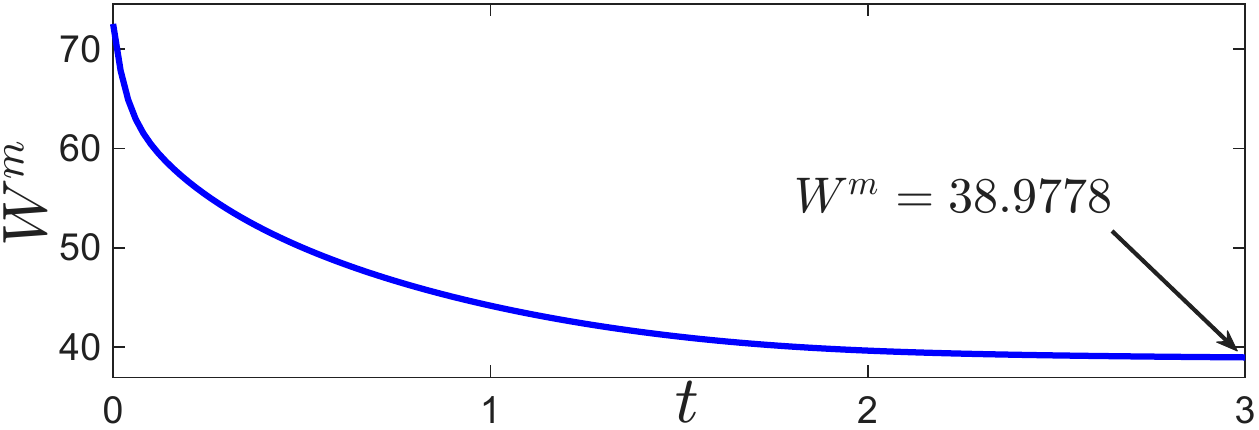}
\caption{Morphological evolution  of a perturbed torus under the Willmore flow at  (a) $t = 0$, (b) $t = 1.5$, and (c) $t = 3$.  The bottom figure shows the corresponding discrete Willmore energy. The mesh size and time step size are chosen as $h = 0.147$ and $\tau = 0.02$.}
\label{fig: evolution4}
\end{figure}

\subsection{Further numerical tests for curvature-dependent geometric gradient flows}

In this subsection, we focus on the curvature-dependent geometric gradient flows with $f(\cur) =1, \cur,\frac{1}{2}\cur^2$ and $\cur^4$.{ The improved ES-PFEMs such as \eqref{eqn:full wm reduced} for $f(\cur) = \frac{1}{2} \cur^2$ are used in our implementation.}%For convenience of numerical implementation, we use equation \eqref{eqn:full3 wm} to compute $\cur^{m+1}$ in all cases except for the mean curvature flow $f(\cur) = 1$.

\begin{figure}[htp!]
    \centering
    
    \includegraphics[width=0.3\textwidth]{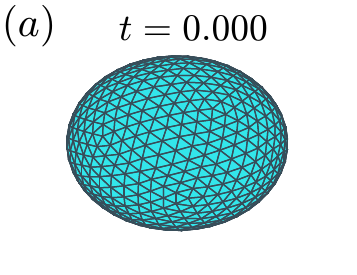}\includegraphics[width=0.3\textwidth]{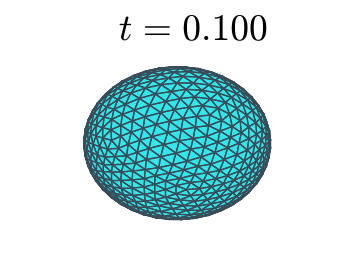}\includegraphics[width=0.3\textwidth]{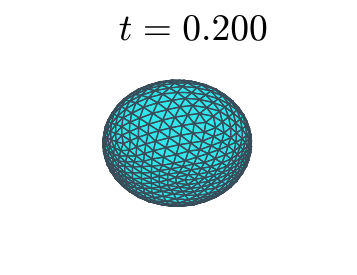}
    
    \includegraphics[width=0.3\textwidth]{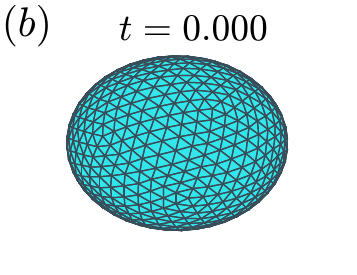}\includegraphics[width=0.3\textwidth]{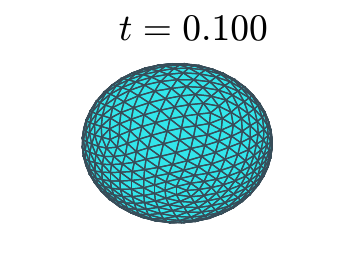}\includegraphics[width=0.3\textwidth]{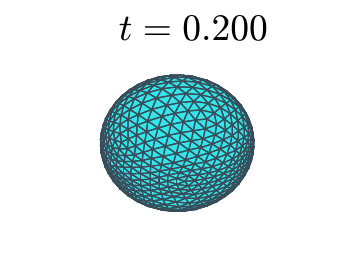}
    
    \includegraphics[width=0.3\textwidth]{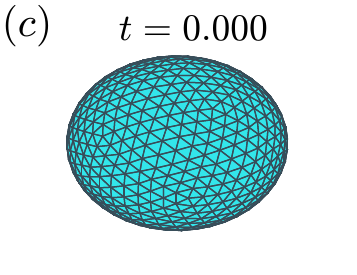}\includegraphics[width=0.3\textwidth]{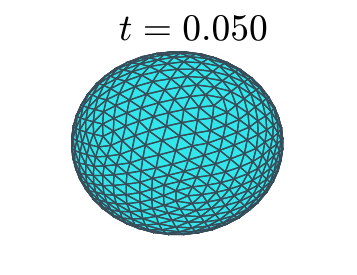}\includegraphics[width=0.3\textwidth]{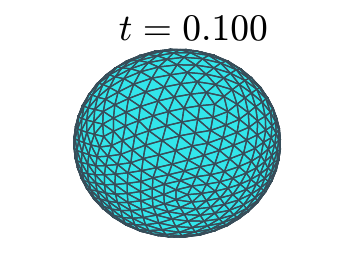}
    
    \includegraphics[width=0.3\textwidth]{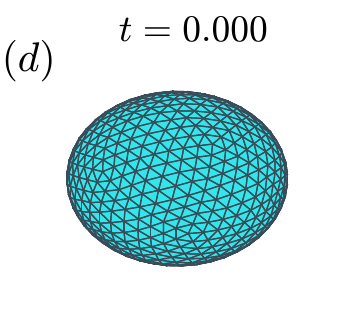}\includegraphics[width=0.3\textwidth]{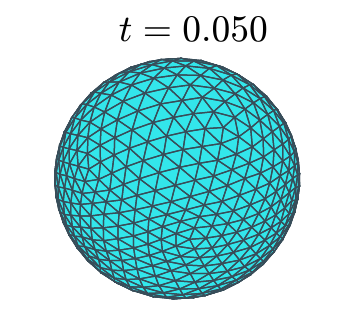}\includegraphics[width=0.3\textwidth]{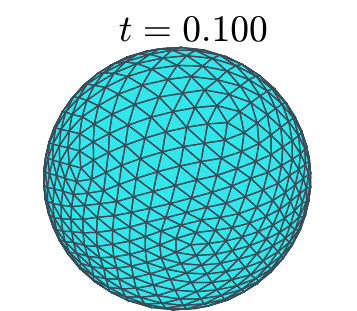}
    
  \includegraphics[width=0.9\textwidth]{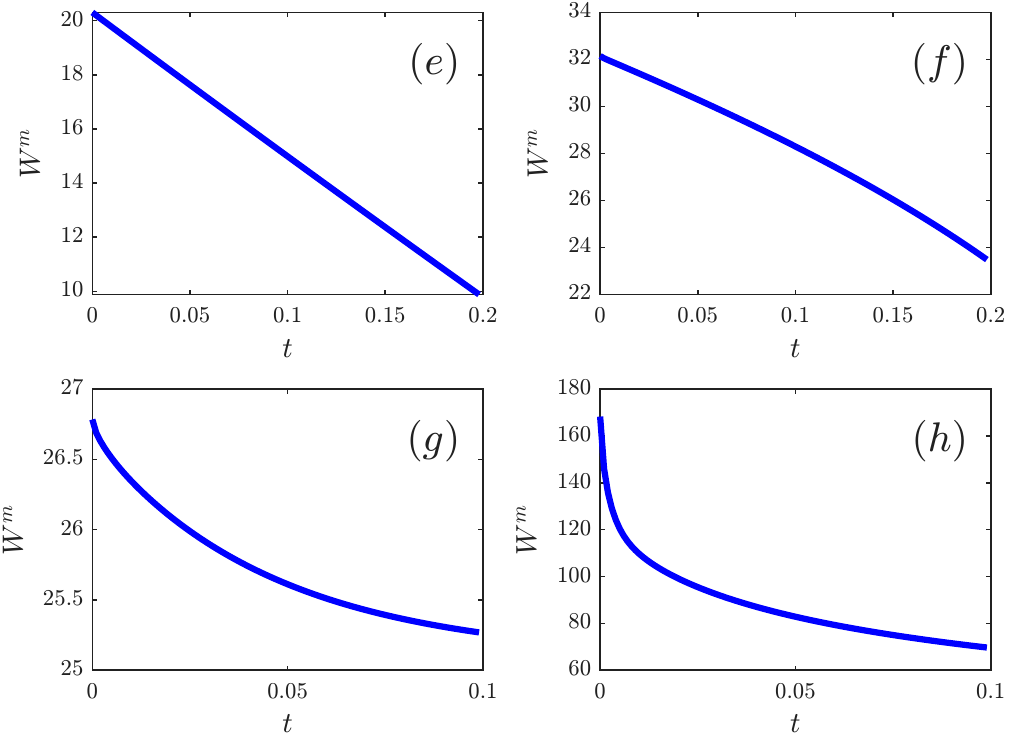}
    \caption{Morphological evolutions of ellipsoid under curvature-dependent geometric gradient flow for (a) $f(\cur) =1$ with $\tau = 0.002$; (b)$f(\cur) = \cur$ with $\tau = 0.002$; (c) $f(\cur)=\frac{1}{2}\cur^2$ with $\tau = 0.001$; and (d)$f(\cur)= \cur^4$ with $\tau = 0.001$. The corresponding discretized energy evolutions are shown in (e),(f),(g) and (h), respectively.} 
    \label{fig: curvature-dependent flow for ellipsoid}
\end{figure}
    
\begin{figure}[htp!]
    \centering
    \includegraphics[width=0.3\textwidth]{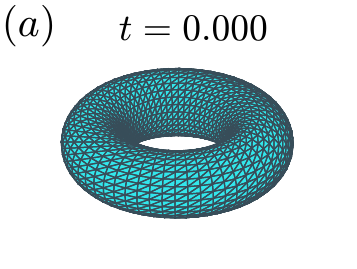}\includegraphics[width=0.3\textwidth]{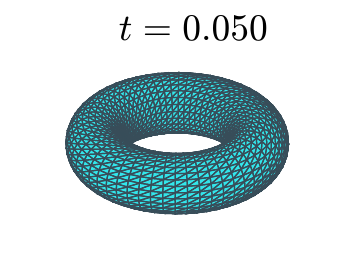}\includegraphics[width=0.3\textwidth]{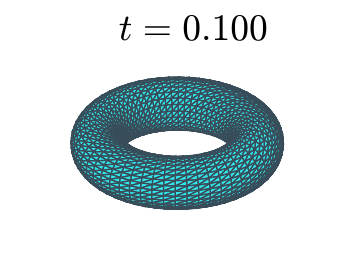}
    
    \includegraphics[width=0.3\textwidth]{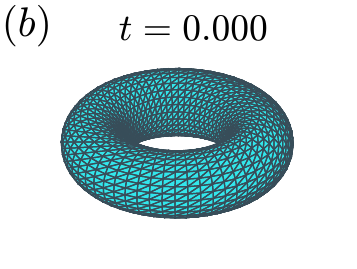}\includegraphics[width=0.3\textwidth]{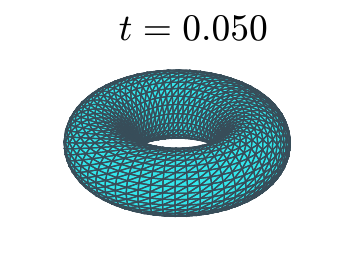}\includegraphics[width=0.3\textwidth]{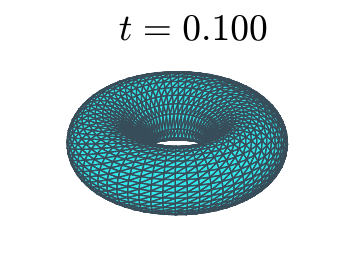}
    
    \includegraphics[width=0.3\textwidth]{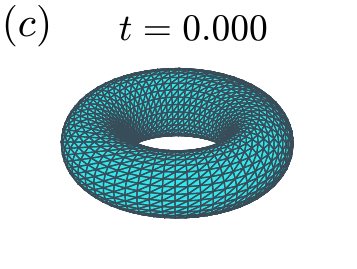}\includegraphics[width=0.3\textwidth]{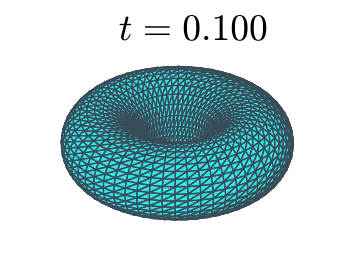}\includegraphics[width=0.3\textwidth]{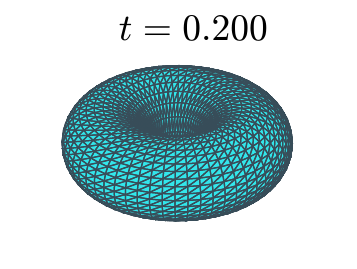}
    
    \includegraphics[width=0.3\textwidth]{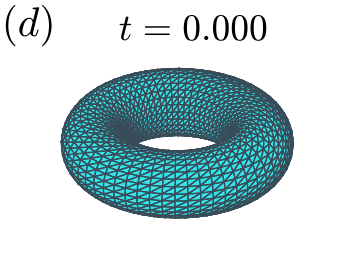}\includegraphics[width=0.3\textwidth]{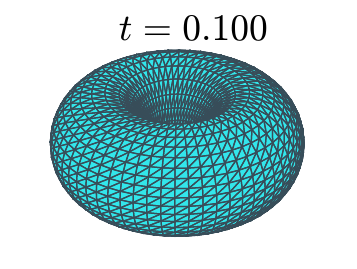}\includegraphics[width=0.3\textwidth]{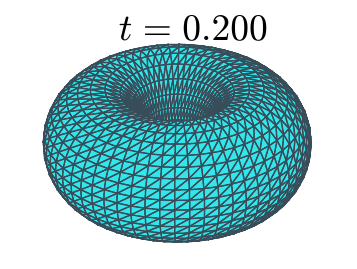}
    
    \includegraphics[width=0.9\textwidth]{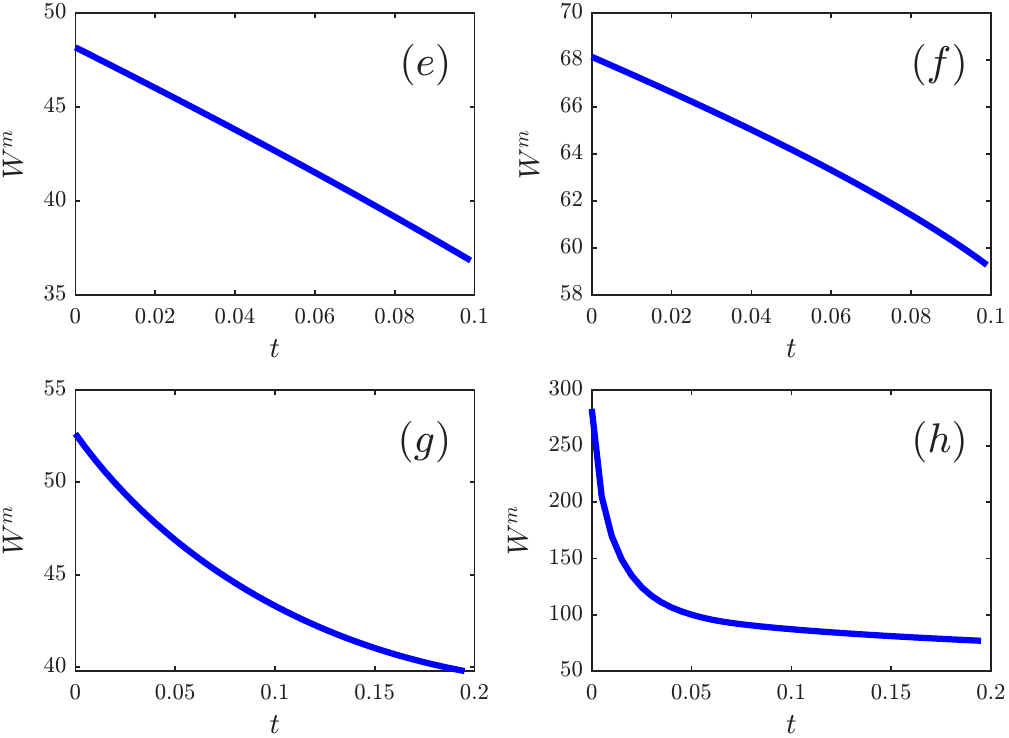}
    \caption{Morphological evolutions of torus under curvature-dependent geometric gradient flow for (a) $f(\cur) =1$ with $\tau = 0.001$; (b)$f(\cur) = \cur$ with $\tau = 0.001$; (c) $f(\cur)=\frac{1}{2}\cur^2$ with $\tau = 0.005$; and (d)$f(\cur)= \cur^4$ with $\tau = 0.005$. The corresponding discretized energy evolutions are shown in (e),(f),(g) and (h), respectively.} 
    \label{fig: curvature-dependent flow for torus}
\end{figure}

The numerical tests for morphological evolutions and the corresponding discretized energy are performed for different $f(\cur)$. In these numerical tests, we use an ellipsoid with $a = b
= 2$ and a torus with $R = \sqrt{3}, r = \sqrt{2}/2$ as initial surfaces, with mesh sizes of $h = 0.151$ and $h = 0.147$, respectively.  In particular, we set $\alpha^0 = 10^6$ in the case of torus simulation.

As illustrated in Figs. \ref{fig: curvature-dependent flow for ellipsoid}  and  \ref{fig: curvature-dependent flow for torus}, we observe that the energy is strictly dissipative. The morphological evolution shows that for $f(\cur)=1$ and $f(\cur)=\cur$, the surfaces approximately undergo contraction, whereas for $f(\cur) = \cur^4$, the surfaces exhibit expansion. We use fixed time points to compare different evolutionary processes. For $f(\cur) = \frac{\cur^2}{2}$, Figs. \ref{fig: curvature-dependent flow for ellipsoid} (g) and \ref{fig: curvature-dependent flow for torus} (g) show that the two surfaces have not achieved stability at the predetermined moment under Willmore flow. {Although the torus considered here is not a convex surface, and our ES-PFEM for Gauss curvature flow cannot theoretically guaranty energy stability, we still computed the torus for the sake of comparison. Nevertheless, the energy dissipation in Fig.~\ref{fig: curvature-dependent flow for torus}(f) remains satisfactory.}

\begin{figure}[t]
\centering
\includegraphics[width=0.33\textwidth]{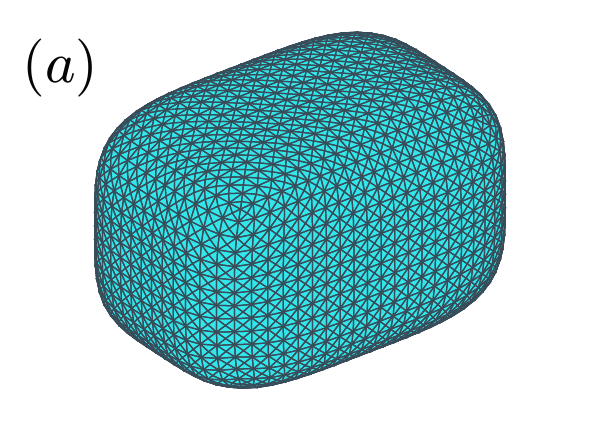}\includegraphics[width=0.33\textwidth]{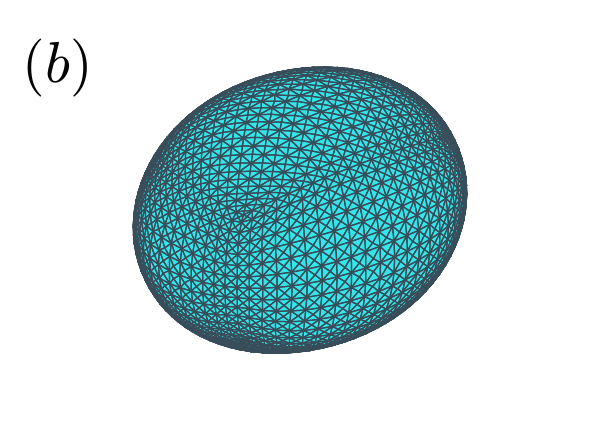}\includegraphics[width=0.33\textwidth]{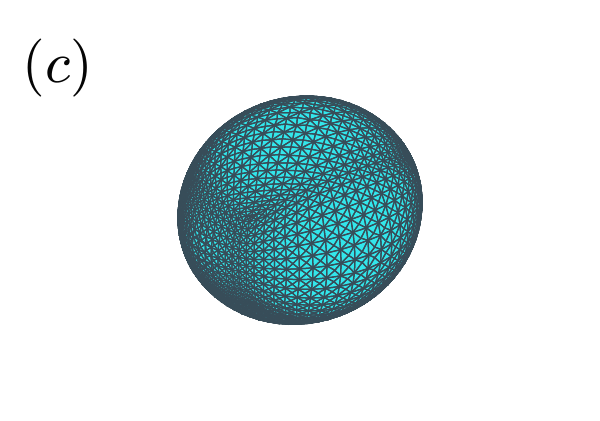}
\vspace{2em}
\includegraphics[width=0.45\textwidth]{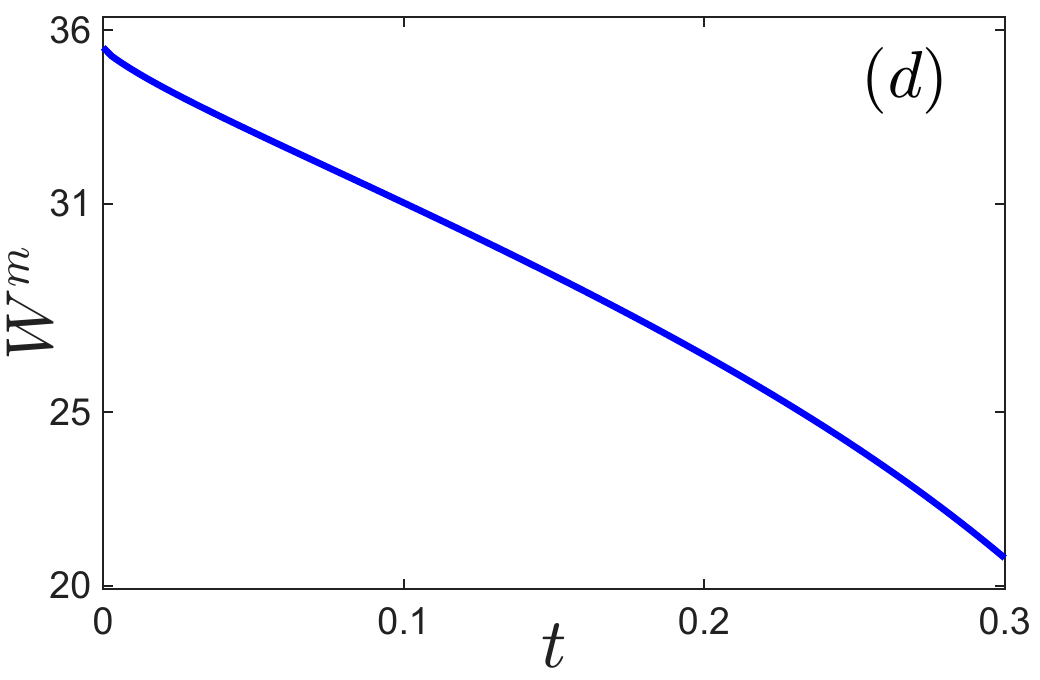}
\includegraphics[width=0.45\textwidth]{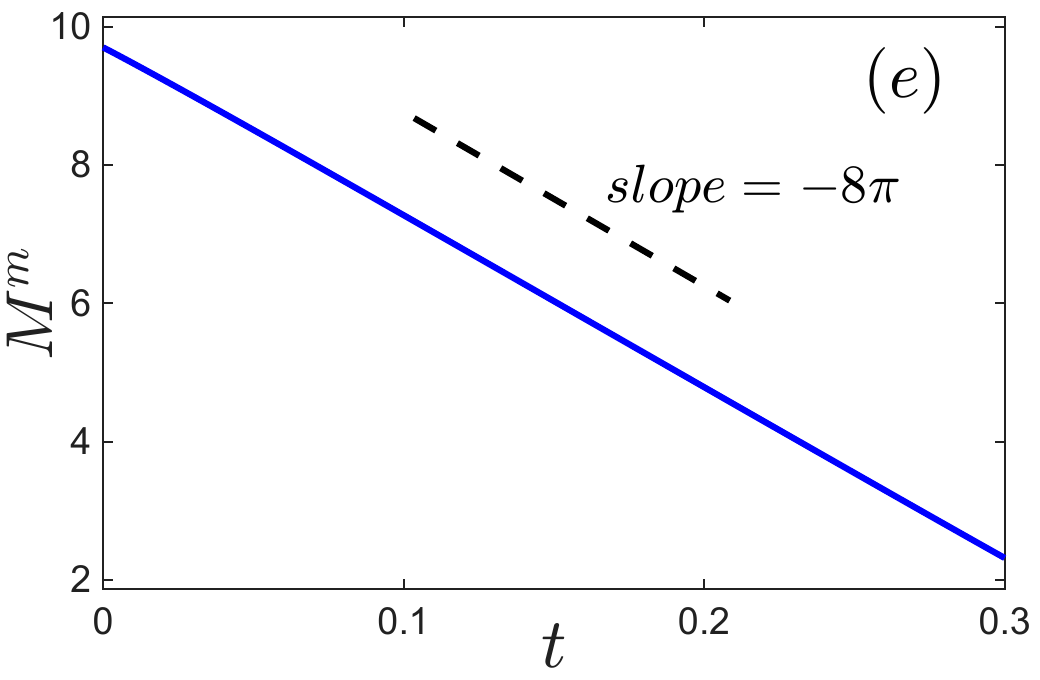}
\caption{Morphological evolution  of a rounded cuboid under the Gauss curvature flow at  (a) $t = 0$, (b) $t = 0.15$, and (c) $t = 0.3$.  The bottom figures show the corresponding  (d) discrete energy $W^m$ and (e) enclosed volume $M^m$. The mesh size and time step size are chosen as $h = 0.066$ and $\tau = 0.003$.}
\label{fig: gauss cuboid evolution}
\end{figure}

\begin{figure}[t]
\centering
\includegraphics[width=0.33\textwidth]{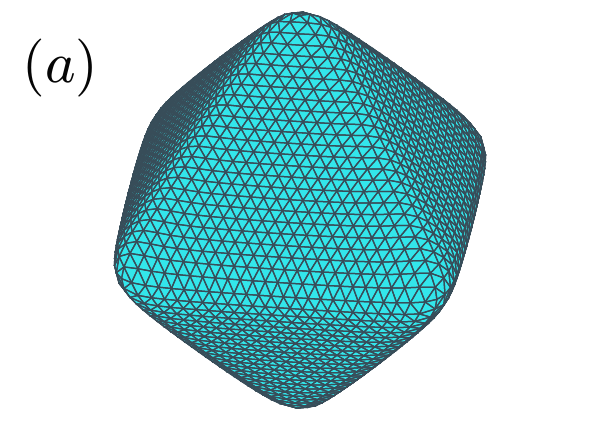}\includegraphics[width=0.33\textwidth]{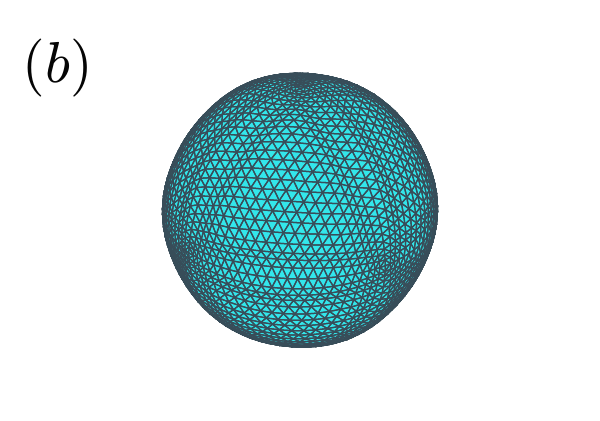}\includegraphics[width=0.33\textwidth]{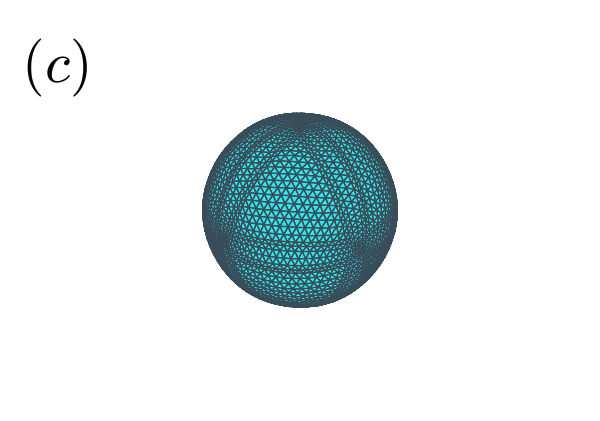}
\vspace{2em}
\includegraphics[width=0.45\textwidth]{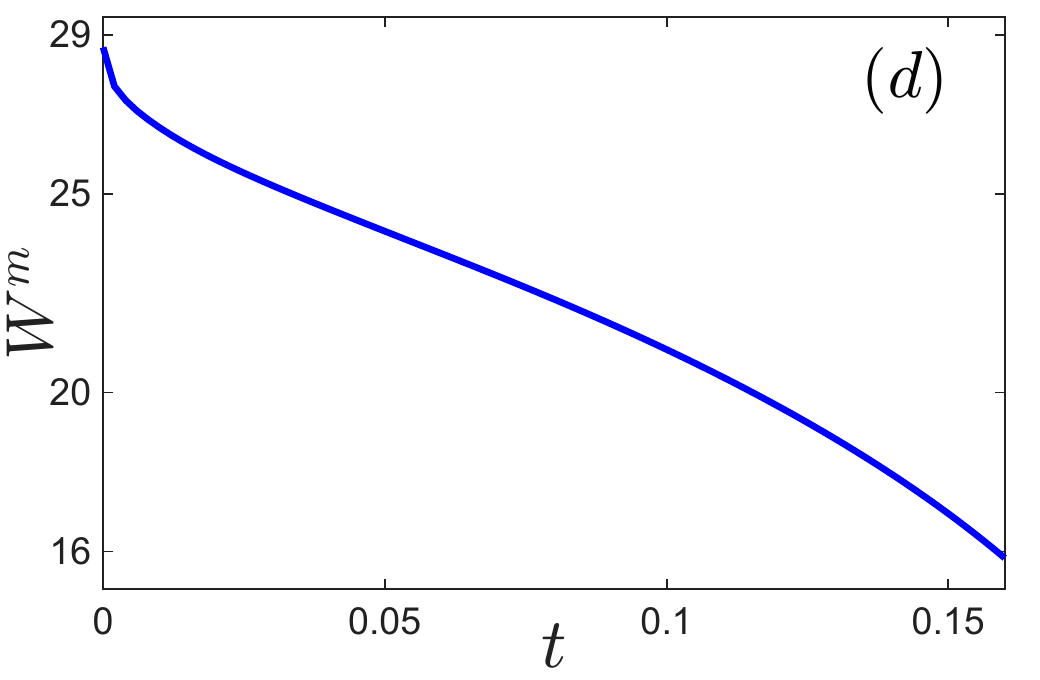}
\includegraphics[width=0.45\textwidth]{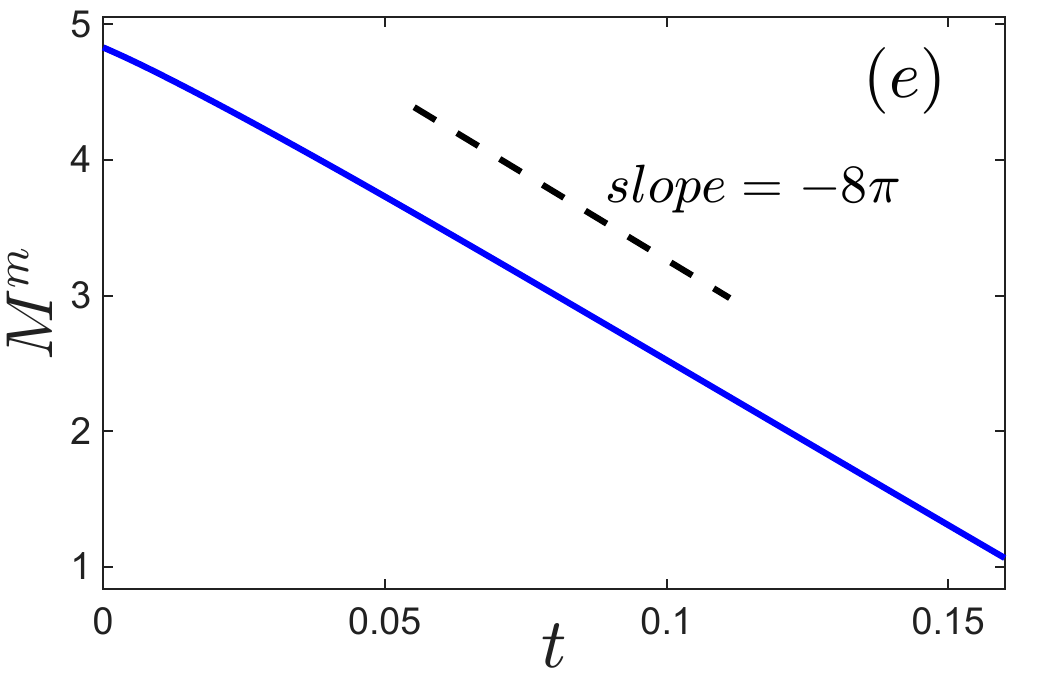}
\caption{Morphological evolution  of a rounded octahedron under the Gauss curvature flow at  (a) $t = 0$, (b) $t = 0.08$, and (c) $t = 0.16$.  The bottom figures show the corresponding (d) discrete energy $W^m$ and (e) enclosed volume $M^m$. The mesh size and time step size are chosen as $h = 0.067$ and $\tau = 0.002$.}
\label{fig: gauss octahedron evolution}
\end{figure}

\begin{figure}[t]
\centering
\includegraphics[width=0.33\textwidth]{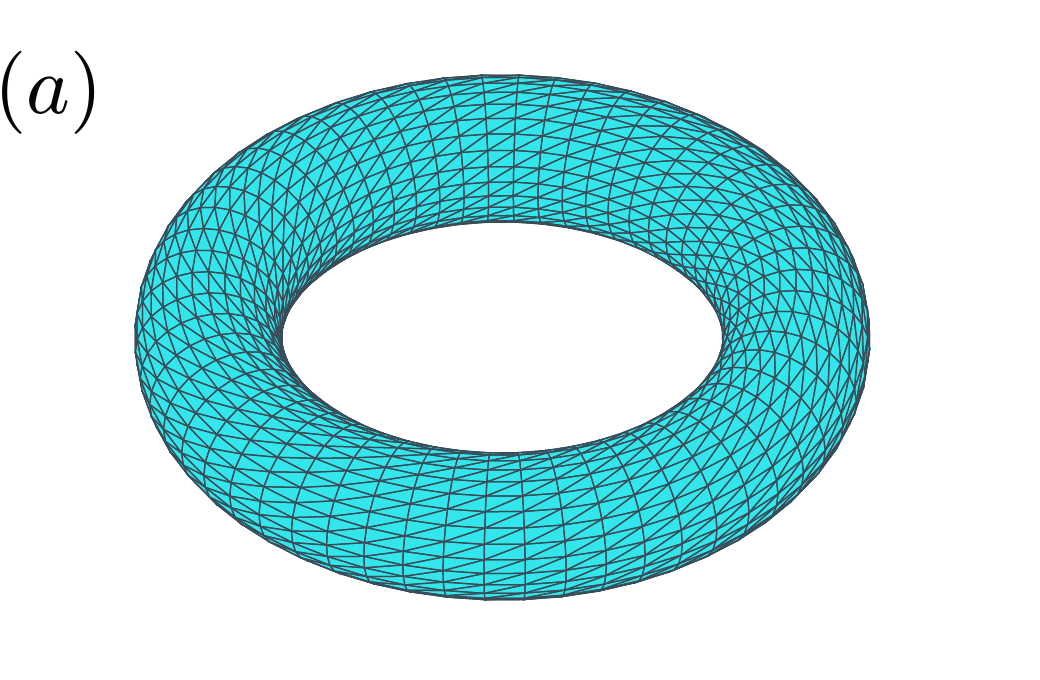}\includegraphics[width=0.33\textwidth]{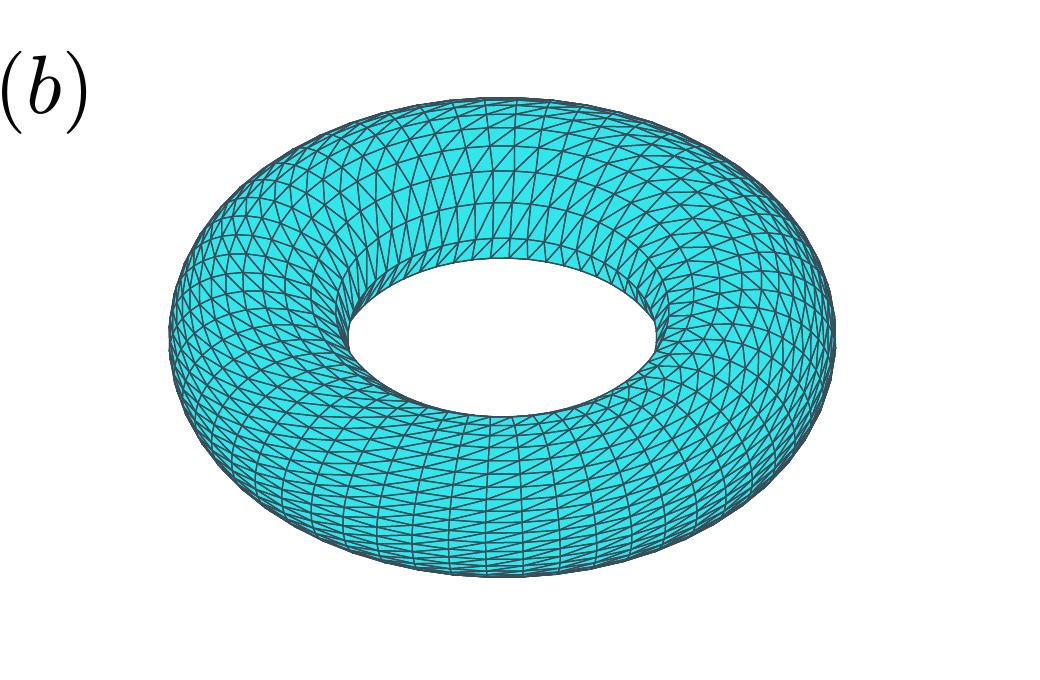}\includegraphics[width=0.33\textwidth]{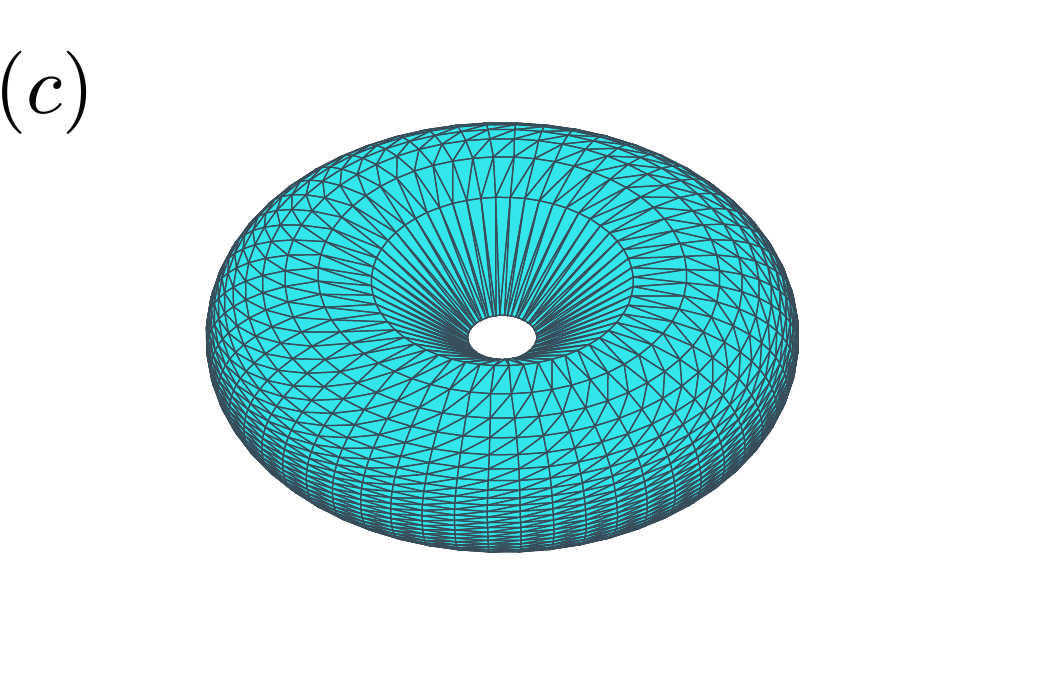}
\vspace{2em}
\includegraphics[width=0.45\textwidth]{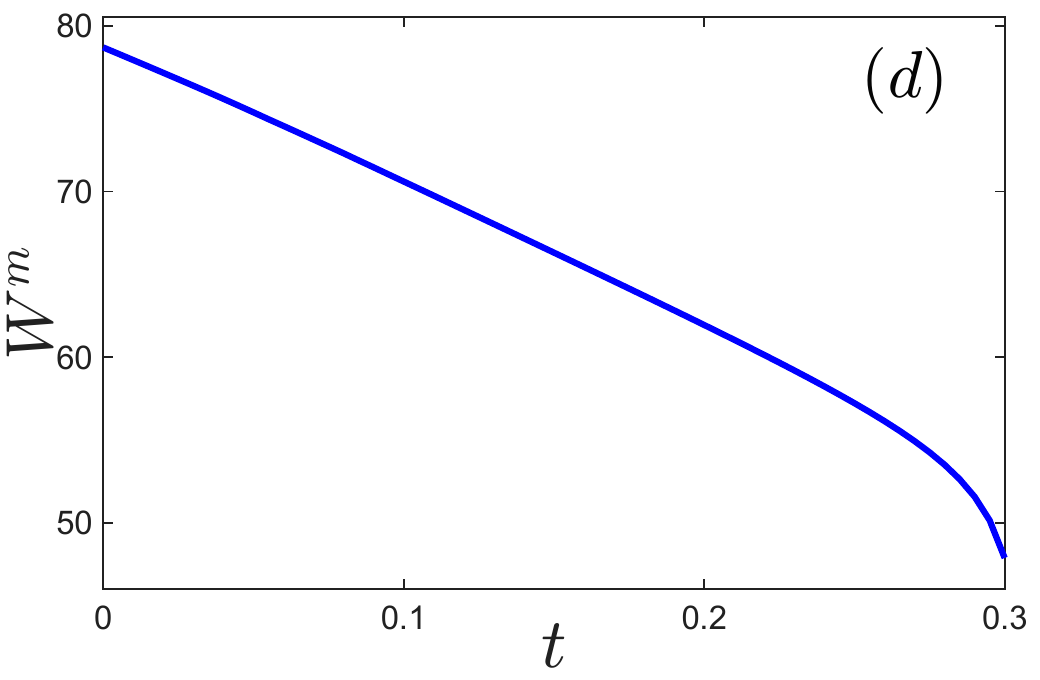}
\includegraphics[width=0.45\textwidth]{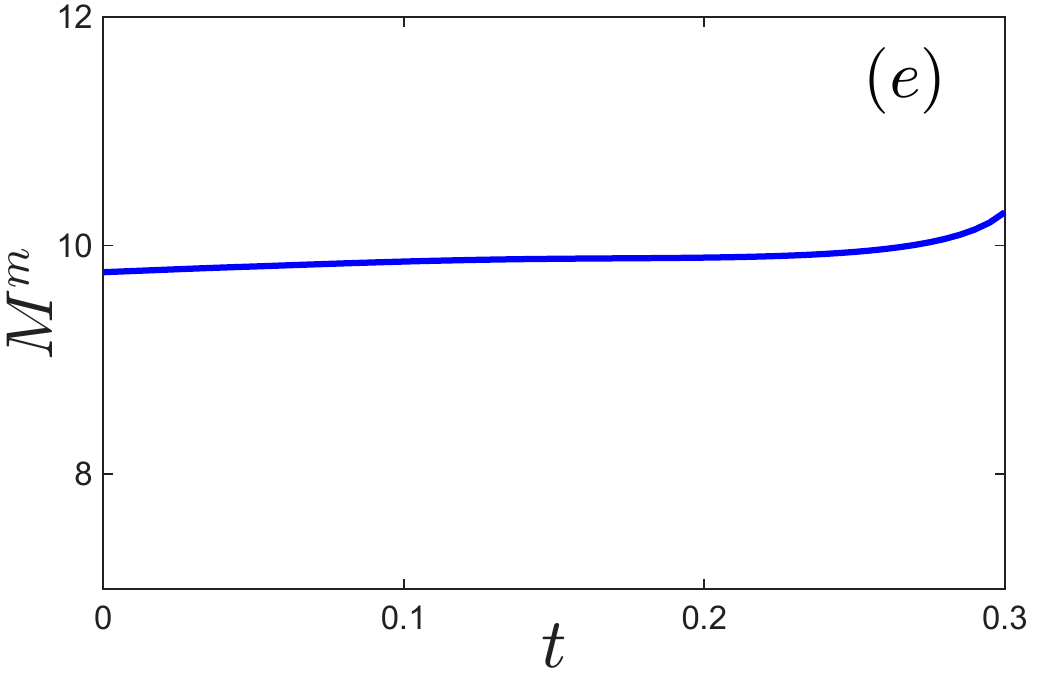}
\caption{Morphological evolution  of a torus with $R = 2$ and $r=0.5$ under the Gauss curvature flow at  (a) $t = 0$, (b) $t = 0.15$, and (c) $t = 0.3$.  The bottom figures show the corresponding (d) discrete energy $W^m$ and (e) enclosed volume $M^m$. The mesh size and time step size are chosen as $h = 0.125$ and $\tau = 0.005$.}
\label{fig: gauss torus evolution}
\end{figure}

{
The Gauss curvature flow is worthy of further numerical investigation. In Figs.~\ref{fig: gauss cuboid evolution} and~\ref{fig: gauss octahedron evolution}, we present evolutionary examples for a rounded cuboid and a rounded octahedron, respectively. At the discrete level, we denote the volume of the polygonal surface at time $t_m$ by $M^m$. According to \eqref{volume rate of gauss flow}, the enclosed volumes of these genus-zero surfaces decrease linearly in time, and the evolving shapes asymptotically approach spheres. This is corroborated by Figs.~\ref{fig: gauss cuboid evolution}(d) and~\ref{fig: gauss octahedron evolution}(d), where the slope of the volume change is approximately $-8\pi$. We further include a torus example in Fig.~\ref{fig: gauss torus evolution} as a more challenging non-convex test with sign-changing Gaussian curvature. The method captures the main qualitative deformation, while the less satisfactory volume behavior in the late stage reflects the numerical difficulty caused by the shrinking central hole and the resulting mesh deterioration.

Moreover, numerical experiments reveal that the computation of complex surfaces beyond spheres sometimes presents significant challenges due to mesh quality degeneration during the evolution, even with tangential motion control techniques.  Two approaches can address this issue: introducing a more appropriate tangential velocity \cite{duan2024new, barrett2008parametric1, garcke2025stable, hu2022evolving, kemmochi2025structure} or performing mesh redistribution \cite{bonito2010parametric}.
However, employing these techniques makes it challenging to preserve the theoretical energy dissipation of the curvature-dependent energy under full discretization. Developing more effective mesh-improvement techniques that preserve energy dissipation as in \cite{kemmochi2025structure} will be further investigated in future work.}

\section{Conclusions}
By introducing a novel identity for the normal velocity vector and an evolution equation of the mean curvature, we proposed a new consistent variational formulation for the  Willmore flow of surfaces in three dimensions (3D). On the basis of the new formulation, we obtained its full discretization by applying PFEM and proved the unconditional energy dissipation of the resulting ES-PFEM. This framework was then extended to other curvature-dependent geometric gradient flows successfully, thus providing a comprehensive insight for the design of ES-PFEM for general gradient flows.
The tangential motion control method was employed to enhance the mesh quality and robustness of the method by coupling the tangential velocity equations while maintaining favorable energy dissipation properties. Moreover, it should be mentioned  how to introduce appropriate tangential motion for long time simulations and maintaining energy stability in the dynamics remains an interesting problem. In future work, we intend to extend our ES-PFEM to Helfrich flow and other related geometric flows.

\vspace{1em}
	\noindent\textbf{Acknowledgment.} This work was partially supported by the Ministry of Education of Singapore under its AcRF Tier 1 funding A-8003584-00-00. The work of Yifei Li is funded by the Alexander von Humboldt Foundation. 

%\clearpage
\bibliographystyle{abbrv}
%\bibliography{ref}

\begin{thebibliography}{10}

 \bibitem{andrews1999gauss}
Andrews, B., \textit{Gauss curvature flow: the fate of the rolling stones}, Invent. Math., 138 (1999), pp. 151--161.

\bibitem{andrews2000motion}
Andrews, B., \textit{Motion of hypersurfaces by Gauss curvature}, Pacific J. Math., 195(1) (2000), pp. 1--34.

\bibitem{bao2025energy}
Bao, W., and Li, Y., \textit{An energy-stable parametric finite element method for the planar Willmore flow}, SIAM J. Numer. Anal., 63(1) (2025), pp. 103--121.

\bibitem{bao2021structure}
Bao, W., and Zhao, Q., \textit{A structure-preserving parametric finite element method for surface diffusion}, SIAM J. Numer. Anal., 59(5) (2021), pp. 2775--2799.

\bibitem{barrett2007parametric}
Barrett, J. W., Garcke, H., and N\"urnberg, R., \textit{A parametric finite element method for fourth order geometric evolution equations}, J. Comput. Phys., 222(1) (2007), pp. 441--467.

\bibitem{barrett2007variational}
Barrett, J. W., Garcke, H., and N{\"u}rnberg, R., \textit{On the variational approximation of combined second and fourth order geometric evolution equations}, SIAM J. Sci. Comput., 29(3) (2007), pp. 1006--1041.

\bibitem{barrett2008parametric1}
Barrett, J. W., Garcke, H., and N\"urnberg, R., \textit{On the parametric finite element approximation of evolving hypersurfaces in $\mathbb{R}^3$}, J. Comput. Phys., 227(9) (2008), pp. 4281--4307.

\bibitem{barrett2008parametric2}
Barrett, J. W., Garcke, H., and N\"urnberg, R., \textit{Parametric approximation of Willmore flow and related geometric evolution equations}, SIAM J. Sci. Comput., 31(1) (2008), pp. 225--253.

\bibitem{barrett2016computational}
Barrett, J. W., Garcke, H., and N\"urnberg, R., \textit{Computational parametric Willmore flow with spontaneous curvature and area difference elasticity effects}, SIAM J. Numer. Anal., 54(3) (2016), pp. 1732--1762.

\bibitem{barrett2017stable}
Barrett, J. W., Garcke, H., and N\"urnberg, R., \textit{Stable variational approximations of boundary value problems for Willmore flow with Gaussian curvature}, IMA J. Numer. Anal., 37(4) (2017), pp. 1657--1709.

\bibitem{Barrett2020}
Barrett, J. W., Garcke, H., and N\"urnberg, R., \textit{Parametric finite element approximations of curvature-driven interface evolutions}, Handb. Numer. Anal., 21 (2020), pp. 275--423.

\bibitem{barrett2021stable}
Barrett, J. W., Garcke, H., and N\"urnberg, R., \textit{Stable approximations for axisymmetric Willmore flow for closed and open surfaces}, ESAIM Math. Model. Numer. Anal., 55(3) (2021), pp. 833--885.

\bibitem{bobenko2005discrete}
Bobenko, A. I., and Schr\"oder, P., \textit{Discrete Willmore flow}, ACM SIGGRAPH Courses, (2005), pp. 5-es.

\bibitem{bohle2008constrained}
Bohle, C., Peters, G. P., and Pinkall, U., \textit{Constrained Willmore surfaces}, Calc. Var. Partial Differ. Equ., 32(2) (2008), pp. 263--277.

\bibitem{bonito2010parametric}
Bonito, A., Nochetto, R. H., and Pauletti, M. S., \textit{Parametric FEM for geometric biomembranes}, J. Comput. Phys., 229(9) (2010), pp. 3171--3188.

\bibitem{brito2016image}
Brito-Loeza, C., Chen, K., and Uc-Cetina, V., \textit{Image denoising using the Gaussian curvature of the image surface}, Numer. Methods Partial Differential Equations, 32(3) (2016), pp. 1066--1089.

\bibitem{ciarlet2000mathematical}
Ciarlet, P. G., \textit{Mathematical Elasticity, Vol. III: Theory of Shells}, North-Holland (2000).

\bibitem{corman2024curvature}
Corman, E., \textit{Curvature-driven conformal deformations}, ACM Trans. Graph., 43(4) (2024), pp. 1--16.

\bibitem{Dall2012}
Dall'Acqua, A., and Pozzi, P., \textit{A Willmore-Helfrich $L^2$-flow of curves with natural boundary conditions}, Commun. Anal. Geom., 22 (2012).

\bibitem{deckelnick2005computation}
Deckelnick, K., Dziuk, G., and Elliott, C. M., \textit{Computation of geometric partial differential equations and mean curvature flow}, Acta Numer., 14 (2005), pp. 139--232.

\bibitem{deuling1976curvature}
Deuling, H. J., and Helfrich, W., \textit{The curvature elasticity of fluid membranes: a catalogue of vesicle shapes}, J. Phys., 37(11) (1976), pp. 1335--1345.

\bibitem{deuling1976red}
Deuling, H. J., and Helfrich, W., \textit{Red blood cell shapes as explained on the basis of curvature elasticity}, Biophys. J., 16(8) (1976), pp. 861--868.

\bibitem{doerfler2019discrete}
D\"orfler, W., and N\"urnberg, R., \textit{Discrete gradient flows for general curvature energies}, SIAM J. Sci. Comput., 41(3) (2019), pp. A2012--A2036.

\bibitem{duan2021high}
Duan, B., Li, B., and Zhang, Z., \textit{High-order fully discrete energy diminishing evolving surface finite element methods for a class of geometric curvature flows}, Ann. Appl. Math., 37(4) (2021), pp. 405--436.

\bibitem{duan2024new}
Duan, B., and Li, B., \textit{New artificial tangential motions for parametric finite element approximation of surface evolution}, SIAM J. Sci. Comput., 46(1) (2024), pp. A587--A608.

\bibitem{dziuk2008computational}
Dziuk, G., \textit{Computational parametric Willmore flow}, Numer. Math., 111 (2008), pp. 55--80.

\bibitem{firey1974shapes}
Firey, W. J., \textit{Shapes of worn stones}, Mathematika, 21(1) (1974), pp. 1--11.

\bibitem{francis1997minimax}
Francis, G., Sullivan, J. M., Kusner, R. B., Brakke, K. A., Hartman, C., and Chappell, G., \textit{The minimax sphere eversion}, Visualization and Mathematics: Experiments, Simulations and Environments, (1997), pp. 3--20.

\bibitem{garcke2019willmore}
Garcke, H., Menzel, J., and Pluda, A., \textit{Willmore flow of planar networks}, J. Differ. Equ., 266(4) (2019).

\bibitem{garcke2025isoparametric}
Garcke, H., N\"urnberg, R., Praetorius, S., and Zhang, G., \textit{Isoparametric finite element methods for mean curvature flow and surface diffusion}, J. Comput. Phys., (2025), Art. 114248.

\bibitem{garcke2025stable}
Garcke, H., N\"urnberg, R., and Zhao, Q., \textit{Stable fully discrete finite element methods with BGN tangential motion for Willmore flow of planar curves}, arXiv preprint arXiv:2503.23152 (2025).

\bibitem{gibbons1983positive}
Gibbons, G. W., Hawking, S. W., Horowitz, G. T., and Perry, M. J., \textit{Positive mass theorems for black holes}, Commun. Math. Phys., 88 (1983), pp. 295--308.

\bibitem{grinspun2003discrete}
Grinspun, E., Hirani, A. N., Desbrun, M., and Schr\"oder, P., \textit{Discrete shells}, Proceedings of the 2003 ACM SIGGRAPH, (2003), pp. 62--67.

\bibitem{grinspun2005discrete}
Grinspun, E., \textit{A discrete model of thin shells}, ACM SIGGRAPH Courses, (2005), pp. 4-es.

\bibitem{gruber2020computational}
Gruber, A., and Aulisa, E., \textit{Computational p-Willmore flow with conformal penalty}, ACM Trans. Graph., 39(5) (2020), pp. 1--16.

\bibitem{hajicek1987origin}
Hajicek, P., \textit{Origin of Hawking radiation}, Phys. Rev. D, 36(4) (1987), pp. 1065.

\bibitem{helfrich1973elastic}
Helfrich, W., \textit{Elastic properties of lipid bilayers: theory and possible experiments}, Z. Naturforsch. C, 28(11--12) (1973), pp. 693--703.

\bibitem{hu2022evolving}
Hu, J., and Li, B., \textit{Evolving finite element methods with an artificial tangential velocity for mean curvature flow and Willmore flow}, Numer. Math., 152(1) (2022), pp. 127--181.

\bibitem{huisken1984flow}
Huisken, G., \textit{Flow by mean curvature of convex surfaces into spheres}, J. Differ. Geom., 20(1) (1984), pp. 237--266.

\bibitem{kemmochi2025structure}
Kemmochi, T., Miyatake, Y., and Sakakibara, K., \textit{Structure-preserving numerical methods for constrained gradient flows of planar closed curves with explicit tangential velocities}, Jpn. J. Ind. Appl. Math., 42 (2025), pp. 575--603.

\bibitem{kovacs2021convergent}
Kov{\'a}cs, B., Li, B., and Lubich, C., \textit{A convergent evolving finite element algorithm for Willmore flow of closed surfaces}, Numer. Math., 149(3) (2021), pp. 595--643.

\bibitem{kuwert2001willmore}
Kuwert, E., and Sch\"atzle, R., \textit{The Willmore flow with small initial energy}, J. Differ. Geom., 57(3) (2001), pp. 409--441.

\bibitem{kuwert2004removability}
Kuwert, E., and Sch\"atzle, R., \textit{Removability of point singularities of Willmore surfaces}, Ann. Math., (2004), pp. 315--357.

\bibitem{lee2005noise}
Lee, S.-H., and Seo, J. K., \textit{Noise removal with Gauss curvature-driven diffusion}, IEEE Trans. Image Process., 14(7) (2005), pp. 904--909.

\bibitem{ma2025energy}
Ma, C., Xiao, X., and Feng, X., \textit{An energy-stable parametric finite element approximation for axisymmetric Willmore flow of closed surfaces}, J. Comput. Phys., 113977 (2025).

\bibitem{marques2014min}
Marques, F. C., and Neves, A., \textit{Min-max theory and the Willmore conjecture}, Ann. Math., (2014), pp. 683--782.

\bibitem{marques2014willmore}
Marques, F. C., and Neves, A., \textit{The Willmore conjecture}, Jahresber. Dtsch. Math.-Ver., 116(4) (2014), pp. 201--222.

\bibitem{riviere2008analysis}
Riviere, T., \textit{Analysis aspects of Willmore surfaces}, Invent. Math., 174(1) (2008), pp. 1--45.

\bibitem{rupp2023volume}
Rupp, F., \textit{The volume-preserving Willmore flow}, Nonlinear Anal., 230 (2023), pp. 113220.

\bibitem{rusu2005algorithm}
Rusu, R. E., \textit{An algorithm for the elastic flow of surfaces}, Interfaces Free Bound., 7(3) (2005), pp. 229--239.

\bibitem{tso1985deforming}
Tso, K., \textit{Deforming a hypersurface by its Gauss-Kronecker curvature}, Comm. Pure Appl. Math., 38(6) (1985), pp. 867--882.

\bibitem{willmore1993riemannian}
Willmore, T. J., \textit{Riemannian Geometry}, Oxford University Press (1993).

\bibitem{zhao2006triangular}
Zhao, H., and Xu, G., \textit{Triangular surface mesh fairing via Gaussian curvature flow}, J. Comput. Appl. Math., 195(1--2) (2006), pp. 300--311.

\bibitem{zhao2021energy}
Zhao, Q., Jiang, W., and Bao, W., \textit{An energy-stable parametric finite element method for simulating solid-state dewetting}, IMA J. Numer. Anal., 41(3) (2021), pp. 2026--2055.
    
    \end{thebibliography}

 \end{document}